\documentclass[a4paper,11pt]{article} 

\usepackage{hyperref}

\usepackage[a4paper,margin=2.7cm]{geometry}
\parskip = 0pt
\parindent = 15pt 

\usepackage{titlesec}
\titleformat{\section}{\LARGE\center\bfseries\scshape}{\thesection.}{.7em}{}
\titlespacing*{\section}{0pt}{3.5ex plus 0ex minus 0ex}{1.5ex plus 0ex}
\titleformat{\subsection}{\Large\center\bfseries\scshape}{\thesubsection.}{.7em}{}
\titlespacing*{\subsection}{0pt}{3.5ex plus 0ex minus 0ex}{1.5ex plus 0ex}

\makeatletter
\g@addto@macro\normalsize{%
  \setlength\abovedisplayskip{10pt}
  \setlength\belowdisplayskip{10pt}	
  \setlength\abovedisplayshortskip{10pt}
  \setlength\belowdisplayshortskip{10pt}}
\makeatother


\interfootnotelinepenalty=10000

\usepackage[english]{babel}
\usepackage[T1]{fontenc}
\usepackage{amsfonts}
\usepackage{mathrsfs}
\usepackage{amssymb}
\usepackage{amsmath}

\usepackage{changebar}
\usepackage{enumitem}
\setlist{nolistsep}
\usepackage{amsthm}
\usepackage[capitalize]{cleveref}
\usepackage{chngcntr}
\usepackage[normalem]{ulem}

\hbadness 10000

\newtheoremstyle{plain}{1mm}{2mm}{\slshape}{}{\color{Blue}\bfseries}{.}{.5em}{}
\newtheoremstyle{definition}{1mm}{2mm}{}{}{\color{Blue}\bfseries}{.}{.5em}{}
\theoremstyle{plain}
\newtheorem*{Thm}{Theorem}	
\newtheorem{Theorem}{Theorem}[section]

\newtheorem{Lemma}[Theorem]{Lemma}
\newtheorem{Proposition}[Theorem]{Proposition}
\newtheorem{Corollary}[Theorem]{Corollary}

\theoremstyle{definition}
\newtheorem{Definition}[Theorem]{Definition}
\newtheorem{Remark}[Theorem]{Remark}
\newtheorem{Example}[Theorem]{Example}

\theoremstyle{plain} 
\newcounter{MainTheoremCounter}

\newtheorem{Maintheorem}[MainTheoremCounter]{Theorem}
\newtheorem{Maincorollary}[MainTheoremCounter]{Corollary}

\theoremstyle{plain}
\newtheorem*{namedthm}{\namedthmname}
\newcounter{namedthm}
\makeatletter
	
\makeatother


\usepackage{xcolor}

\definecolor{Scarlet}{rgb}{0.65, 0.10, 0.0}
\definecolor{Blue}{rgb}{0.0, 0.05, 0.39}
\definecolor{Green}{rgb}{0.3, 0.6 ,0.2}

\hypersetup{	citecolor = Scarlet,colorlinks,
			linkcolor = black,
			urlcolor = Scarlet}

\newcommand{\N}{\mathbb{N}}
\newcommand{\Z}{\mathbb{Z}}
\newcommand{\R}{\mathbb{R}}
\newcommand{\C}{\mathbb{C}}
\newcommand{\Q}{\mathbb{Q}}

\newcommand{\define}[1]{\emph{#1}}


\renewcommand{\epsilon}{\varepsilon}
\renewcommand{\leq}{\leqslant}
\renewcommand{\geq}{\geqslant}
\renewcommand{\setminus}{{\backslash}}
\renewcommand{\Re}{{\rm Re}}

\renewcommand{\colon}{\nobreak\mskip2mu\mathpunct{}\nonscript\mkern-\thinmuskip{:}\mskip6muplus1mu\relax}


\newcommand{\xbm}{(X,\mathcal{B},\mu)}

\newcommand{\xbmt}{(X,\mathcal{B},\mu,T)}

\newcommand{\mob}{\boldsymbol{\mu}}
\newcommand{\lio}{\boldsymbol{\lambda}}
\newcommand{\tot}{\boldsymbol{\varphi}}

\newcommand{\vep}{\varepsilon}

\newcommand{\1}{1}
\newcommand{\cm}{\mathcal{M}}

\renewcommand{\P}{\mathbb{P}}
\newcommand{\T}{\mathbb{T}}

\newcommand{\G}{\mathbf{G}}
\newcommand{\mconv}{\cm_{0}}
\newcommand{\Oh}{{\rm O}}
\newcommand{\oh}{{\rm o}}
\newcommand{\ov}[1]{\overline{#1}}
\newcommand{\acc}[1]{{\rm acc}{#1}}
\newcommand{\diam}[1]{{\rm diam}\left({#1}\right)}

\newcommand{\Dzero}{\mathcal{D}}

\newcommand{\Drat}{\mathcal{D}_{\text{\normalfont rat}}}

\newcommand{\ME}{\mathcal{E}}
\newcommand{\MO}{\mathcal{O}}



\begin{document}
\allowdisplaybreaks

\title{A Structure Theorem for Level Sets of Multiplicative Functions and Applications}
\author{V.\ Bergelson\thanks{The first author gratefully acknowledges the support of the NSF under grant DMS-1500575.} \and J.\ Ku\l aga-Przymus\thanks{Research supported by Narodowe Centrum Nauki UMO-2014/15/B/ST1/03736 and the European Research Council (ERC) under the European Union's Horizon 2020 research and innovation programme (grant agreement No 647133 (ICHAOS)).} \and M.\ Lema\'nczyk\thanks{Research supported by Narodowe Centrum Nauki UMO-2014/15/B/ST1/03736 and the EU grant ``AOS'', FP7-PEOPLE-2012-IRSES, No 318910.} \and F.\ K.\ Richter}

\date{\small \today}

\maketitle
\vspace{-0.3 cm}
\begin{abstract}
\noindent
Given a level set $E$ of an arbitrary multiplicative function $f$, we establish, by building on the fundamental work of Frantzikinakis and Host \cite{FH17-2,FH17}, a structure theorem which gives a decomposition of $\1_E$ into an \define{almost periodic} and a \define{pseudo-random} part.
Using this structure theorem together with the technique developed by the authors in \cite{BKLR16arXiv}, we obtain the following result pertaining to polynomial multiple recurrence.
\begin{Thm}
Let $E=\{n_1<n_2<\ldots\}$ be a level set of an arbitrary multiplicative function with positive density. Then the following are equivalent:
\begin{itemize}
\item
$E$ is divisible, i.e.\ the upper density of the set $E\cap u\N$ is positive for all $u\in\N$;
\item
$E$  is an averaging set of polynomial multiple recurrence, i.e.
for all measure preserving systems $\xbmt$, all $A\in\mathcal{B}$ with
$\mu(A)>0$, all $\ell\geq 1$ and all polynomials
$p_i\in\Z[x]$, $i=1,\ldots,\ell$, with $p_i(0)=0$ we have
\begin{equation*}
\lim_{N\to\infty}\frac{1}{N}\sum_{j=1}^N
\mu\big(A\cap T^{-p_1(n_j)}A\cap\ldots\cap T^{-p_\ell(n_j)}A\big)>0.
\end{equation*}
\end{itemize}
\end{Thm}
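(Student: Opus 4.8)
The plan is to prove the two implications separately. From the averaging recurrence property to divisibility is the easy direction: given $u\in\N$, I would apply the recurrence hypothesis to the cyclic rotation $T\colon x\mapsto x+1$ on $X=\Z/u\Z$ with the normalized counting measure, to the set $A=\{0\}$, and to $\ell=1$, $p_1(x)=x$. Since $\mu(A\cap T^{-n}A)=\tfrac1u\1_{u\mid n}$ there, the hypothesis yields $\lim_{N\to\infty}\tfrac1N\,|\{j\le N:u\mid n_j\}|>0$; as $E$ has positive density we have $n_j/j\to 1/d(E)$, so $\{n_j:j\le N,\ u\mid n_j\}=E\cap u\N\cap[1,n_N]$ occupies a positive proportion of $[1,n_N]$ for all large $N$, and therefore $\overline{d}(E\cap u\N)>0$.

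For the converse, suppose $E$ is divisible, fix a measure preserving system $\xbmt$, a set $A$ with $\mu(A)>0$, an integer $\ell\ge1$, and polynomials $p_i\in\Z[x]$ with $p_i(0)=0$, and write $c(n):=\mu\big(A\cap T^{-p_1(n)}A\cap\dots\cap T^{-p_\ell(n)}A\big)\in[0,1]$. Using $n_j/j\to 1/d(E)$ one rewrites $\tfrac1N\sum_{j\le N}c(n_j)$ as $\tfrac{n_N}{N}\cdot\tfrac1{n_N}\sum_{n\le n_N}\1_E(n)c(n)$, so it suffices to prove that $\tfrac1M\sum_{n\le M}\1_E(n)c(n)$ converges to a strictly positive limit as $M\to\infty$. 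Here I would invoke the structure theorem to split $\1_E=f_{\mathrm{ap}}+f_{\mathrm{err}}$ with $0\le f_{\mathrm{ap}}\le1$ almost periodic and $f_{\mathrm{err}}$ bounded and pseudo-random, and then treat the two parts separately. The pseudo-random part contributes nothing to the limit: by the results of Bergelson--Host--Kra, Leibman and Frantzikinakis the multicorrelation sequence $c(n)$ agrees, up to a term that tends to $0$ in density, with a nilsequence, while a pseudo-random $f_{\mathrm{err}}$ has negligible correlation with nilsequences of every step, so $\tfrac1M\sum_{n\le M}f_{\mathrm{err}}(n)c(n)\to0$.

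This reduces the task to $\tfrac1M\sum_{n\le M}f_{\mathrm{ap}}(n)c(n)$, where the technique of \cite{BKLR16arXiv} takes over. Convergence of this average is a case of the weighted polynomial multiple ergodic theorem for the (rationally) almost periodic weight $f_{\mathrm{ap}}$. For positivity, the crucial observation is that divisibility of $E$ is inherited by $f_{\mathrm{ap}}$: since $\1_{u\N}$ is periodic, hence orthogonal to $f_{\mathrm{err}}$, one has $\tfrac1M\sum_{n\le M}f_{\mathrm{ap}}(n)\1_{u\N}(n)\to d(E\cap u\N)>0$ for every $u\in\N$. This non-degeneracy is exactly what the argument of \cite{BKLR16arXiv} requires: approximating $f_{\mathrm{ap}}$ by nonnegative periodic weights, applying the Bergelson--Leibman polynomial multiple recurrence theorem (valid since $p_i(0)=0$, and equally for the rescaled polynomials $n\mapsto p_i(qn)$) along suitable arithmetic progressions, and using divisibility to keep the pertinent weight bounded away from $0$, one concludes $\liminf_{M\to\infty}\tfrac1M\sum_{n\le M}f_{\mathrm{ap}}(n)c(n)>0$. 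Assembling the three contributions proves the theorem.

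The hardest part will be this last step: arranging that the decomposition coming from the structure theorem is genuinely compatible with the \cite{BKLR16arXiv} machinery---that the part of $f_{\mathrm{ap}}$ relevant to polynomial multiple recurrence is rationally almost periodic, that the divisibility of $E$ supplies precisely the non-degeneracy that forces the positive lower bound, and that the quantitative gain from the Bergelson--Leibman theorem is not destroyed when passing to arithmetic subprogressions. A secondary technical point---and the place where the work of Frantzikinakis and Host \cite{FH17-2,FH17} is indispensable---is verifying that the pseudo-random component produced by the structure theorem really is pseudo-random in the strong sense (negligible correlation with nilsequences of arbitrary step) needed to annihilate the polynomial multicorrelation sequences.
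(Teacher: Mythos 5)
Your proposal is correct and follows essentially the same route as the paper: the structure theorem yields the decomposition $\1_E=\tfrac{d(E)}{d(R)}\1_R+\tfrac{1}{d(R)}\bigl(d(R)\1_E-d(E)\1_R\bigr)$ with $R\in\Drat$ rational and inheriting divisibility from $E$ (via the zero-mean property of uniform functions), and the structured part is then handled exactly as you describe by the recurrence theorem for rational sets from \cite{BKLR16arXiv}. The only (harmless) divergence is how the uniform component is annihilated: you pass through the nilsequence approximation of polynomial multicorrelation sequences, whereas the paper applies the direct estimate $\bigl\|\frac1N\sum_{n\leq N}F(n)\,T^{p_1(n)}f_1\cdots T^{p_\ell(n)}f_\ell\bigr\|_{L^2\xbm}=\Oh\bigl(\|F\|_{U^s_{[N]}}\bigr)+\oh(1)$ of Frantzikinakis--Host--Kra (\cref{l11}), which accomplishes the same reduction without invoking the deeper structure theory for multicorrelation sequences.
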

We also show that if a level set $E$ of a multiplicative function has positive upper density, then any self-shift $E-r$, $r\in E$, is a set of averaging 
polynomial multiple recurrence. This in turn leads to the following refinement of the polynomial Szemer{\'e}di theorem (cf. \cite{BL96}).

\begin{Thm}
Let $E$ be a level set of an arbitrary multiplicative function, suppose $E$ has positive upper density and let $r\in E$.
Then for any set $D\subset \N$ with positive upper density and any polynomials $p_i\in\Q[t]$, $i=1,\ldots,\ell$, which satisfy
$p_i(\Z)\subset\Z$ and $p_i(0)=0$ for all $i\in\{1,\ldots,\ell\}$,
there exists $\beta>0$ such that the set
$$
\left\{\,n\in E-r:\overline{d}\Big(
D\cap (D-p_1(n))\cap \ldots\cap(D-p_\ell(n))
\Big)>\beta \,\right\}
$$
has positive lower density.
\end{Thm}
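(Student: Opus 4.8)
The plan is to deduce this from the theorem stated just above --- that every self-shift $E-r$ of such a level set is a set of averaging polynomial multiple recurrence --- by means of Furstenberg's correspondence principle, followed by a soft ``positive average implies positive density of the good indices'' argument. No new analytic or ergodic input should be required beyond the structure theorem and the self-shift recurrence theorem; the deduction itself is bookkeeping.

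First I would fix $D\subseteq\N$ with $\overline{d}(D)>0$, polynomials $p_i\in\Q[t]$ with $p_i(\Z)\subseteq\Z$ and $p_i(0)=0$, and $r\in E$, and enumerate $E-r=\{m_1<m_2<\cdots\}$. Furstenberg's correspondence principle supplies a measure preserving system $\xbmt$ and a set $A$ with $\mu(A)=\overline{d}(D)>0$ such that
\[
\overline{d}\big(D\cap(D-p_1(n))\cap\cdots\cap(D-p_\ell(n))\big)\geq\mu\big(A\cap T^{-p_1(n)}A\cap\cdots\cap T^{-p_\ell(n)}A\big)
\]
for every integer $n$. In view of this inequality it suffices to produce $\beta>0$ for which the set $B:=\{n\in E-r:\mu(A\cap T^{-p_1(n)}A\cap\cdots\cap T^{-p_\ell(n)}A)>\beta\}$ has positive lower density in $\N$, since $B$ is then contained in the set appearing in the statement.

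Next I would apply the quoted theorem on self-shifts (which, like all polynomial recurrence results, holds verbatim for integer-valued polynomials with zero constant term) to $\xbmt$, $A$, and $p_1,\dots,p_\ell$, obtaining a constant
\[
c=\liminf_{N\to\infty}\frac1N\sum_{j=1}^{N}\mu\big(A\cap T^{-p_1(m_j)}A\cap\cdots\cap T^{-p_\ell(m_j)}A\big)>0 .
\]
Setting $\beta:=c/2$ and $S:=\{j\in\N:\mu(A\cap T^{-p_1(m_j)}A\cap\cdots\cap T^{-p_\ell(m_j)}A)>\beta\}$ one has $B=\{m_j:j\in S\}$, and since the summands lie in $[0,1]$ the elementary bound $\frac1N\sum_{j=1}^N(\cdots)\leq|S\cap[1,N]|/N+\beta$ forces $S$ to have lower density at least $c-\beta=\beta>0$ in the index variable $j$. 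To pass to $\N$, write $N(M):=|(E-r)\cap[1,M]|$, note $|B\cap[1,M]|=|S\cap[1,N(M)]|$, and factor $|B\cap[1,M]|/M=\big(|S\cap[1,N(M)]|/N(M)\big)\cdot\big(N(M)/M\big)$; since $N(M)\to\infty$ the liminf of the first factor is $\geq\beta$, while the liminf of the second equals $\underline{d}(E-r)=\underline{d}(E)$.

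The step I expect to require genuine care --- and the only serious input beyond the two quoted theorems --- is the positivity of $\underline{d}(E)$: a priori $E$ is only assumed to have positive \emph{upper} density, so one must invoke the structure theorem, which writes $\1_E$ as an almost periodic function plus a pseudo-random one and thereby endows $\1_E$ with a genuine mean; this mean is positive, hence $\underline{d}(E-r)=\underline{d}(E)>0$. Combining the two factors then yields $\underline{d}(B)\geq\beta\,\underline{d}(E)>0$, and the theorem follows with this $\beta$.
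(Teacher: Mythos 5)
Your argument is correct and is precisely the paper's route: the paper deduces this statement from \cref{thm:recurrence} (every self-shift of $E$ is an averaging set of polynomial multiple recurrence) via Furstenberg's correspondence principle, leaving exactly the bookkeeping you carry out. The one slip is in your last step: the existence of $d(E)$ (hence $\underline{d}(E)=d(E)>0$, which is also needed to invoke \cref{thm:recurrence} at all) comes from Ruzsa's theorem, i.e.\ \cref{cor:multiplicative-fibers-have-density}, and not from the structure theorem, whose hypothesis already presupposes that $E$ has positive density.
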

\end{abstract}
\small
\tableofcontents
\thispagestyle{empty}
\normalsize

\section{Introduction}
\label{sec:intro}

In this paper we utilize facts, techniques and ideology coming from multiplicative number theory to obtain refinements and enhancements of some classical results in the theory of multiple recurrence.

An arithmetic function $f\colon\N=\{1,2,\ldots,\}\to\C$ is called \define{multiplicative} if $f(1)=1$ and $f(m n)=f(m)\cdot f(n)$ for all relatively prime $m,n\in\N$ and it is called \define{completely multiplicative} if $f(1)=1$ and $f(m n)=f(m)\cdot f(n)$ for all $m,n\in\N$. Let $\cm$ denote the set of all multiplicative functions bounded in modulus by $1$.
To motivate our results, we start by formulating a dichotomy theorem for the class $\mconv$ of all multiplicative functions $f\in\cm$ with the property that $\lim_{N\to\infty}\frac{1}{N}\sum_{n=1}^N f(qn+r)$ exists for all $q,r\in\N$.
This dichotomy theorem for $\mconv$ follows quickly by combining the work of Daboussi and Delange \cite{DD74,DD82,Delange72}, Frantzikinakis and Host \cite{FH17} and Bellow and Losert \cite{BL85} and is closely related to \cite[Theorem 1.1]{FH17}. It asserts that any function in $\mconv$ is either `highly structured' or exhibits (pseudo-)random behavior.
To formulate this theorem we first have to introduce \define{Besicovitch rationally almost periodic} functions, which epitomize `structure', and Gowers' notion of \define{uniformity}, which epitomizes (pseudo-)randomness.

The \define{Besicovitch seminorm} $\|.\|_B$ for a bounded function $f\colon \N\rightarrow \C$ is
defined as 
\begin{equation}
\label{eq:def-besic-seminorm}
\|f\|_B:=\limsup_{N\to\infty}\frac1N\sum_{n=1}^N|f(n)|.
\end{equation}

\begin{Definition}[\cite{Besicovitch55,BL85,BKLR16arXiv}]\label{def:d3}
Let $f\colon \N\to \C$ be a bounded arithmetic function.
\begin{itemize}
\item
$f$ is called \define{Besicovitch almost periodic} if for every $\vep>0$ there exists a
trigonometric polynomial $P(n)=\sum_{j=1}^k c_je(n\theta_j)$
with $c_1,\ldots,c_k\in\C$ and $\theta_1,\ldots,\theta_k\in\R$ such that $\|f-P\|_B<\vep$.
\item
Following \cite{BKLR16arXiv}, we call $f\colon \N\to\C$ \define{Besicovitch rationally almost periodic} if for every $\vep>0$ there
exists a periodic function $P\colon \N\to\C$ (or, equivalently, a
trigonometric polynomial $P(n)=\sum_{j=1}^k c_je(n\theta_j)$
with $c_1,\ldots,c_k\in\C$ and $\theta_1,\ldots,\theta_k\in\Q$) such that $\|f-P\|_B<\vep$.
\end{itemize}
\end{Definition}

Many multiplicative functions are Besicovitch rationally almost periodic. For instance, any bounded multiplicative function taking values in $[0,\infty)$ is Besicovitch rationally almost periodic (as is shown in \cref{ren:0-1-valued-RAP} below). 

Next, we give the definition of the \define{Gowers uniformity seminorms} and of \define{uniform functions}.
These notions play a central role in additive combinatorics and have useful applications to ergodic theory.

Given $N\in\N$ we write $[N]$ for the interval $\{1,2,\ldots,N\}$.

\begin{Definition}[Gowers uniformity seminorms, \cite{Gowers01,GTZ12}]\label{def:GN}
Let $N\in\N$ and let $\Z/N\Z$ denote the finite cyclic group with $N$ elements. For $h\in\Z/N\Z$ and $f\colon \Z/N\Z\to\C$ we define $\Delta_h f\colon\Z/N\Z\to\Z$ as $\Delta_{h}f(n)= f(n+h)\overline{f(n)}$ for all $n\in\Z/N\Z$. 
Given $s\in\N$, the {\em Gowers uniformity norm} $\|.\|_{U^s_{\Z/N\Z}}$ on $\Z/N\Z$ is defined as
$$
\|f\|_{U^s_{\Z/N\Z}}:=\left(\frac{1}{N^{s+1}}\sum_{n,h_1,\ldots,h_s\in\Z/N\Z} \Delta_{h_1}\cdots\Delta_{h_s}f(n)\right)^{1/2^s}.
$$
To define the {\em Gowers uniformity seminorm} $\|.\|_{U^s_{[N]}}$ for a function $f\colon\N\to\C$, set $\tilde{N}:=2^s N$, define a function $f_N\colon \Z/\tilde{N}\Z\to\C$ as $f_N(n)=f(n)$ for $n\in[N]$ and $f_N(n)=0$ for $n\in[\tilde{N}]\setminus[N]$ (where we identify $\Z/\tilde{N}\Z$ with the interval $[\tilde{N}]$), let $1_{[N]}$ be the indicator function of the interval $[N]$, and define\footnote{We remark that there are different ways of introducing the Gowers uniformity seminorms $\|.\|_{U^s_{[N]}}$, but they all lead to equivalent notions of uniformity.
Also, for $s\geq 2$ and when viewed on the space of all functions $f\colon[N]\to\C$, the seminorm $\|.\|_{U^s_{[N]}}$ is in fact a norm. It will, however, be more convenient for us to view $\|.\|_{U^s_{[N]}}$ as a seminorm on the space of all bounded functions $f\colon\N\to\C$.
For a comprehensive discussion of this topic see subsections A.1 and A.2 of Appendix A
in \cite{FH17} or see Appendix B in \cite{GT10}.}
$$
\|f\|_{U^s_{[N]}}:=
\frac{\|f_N\|_{U^s_{\Z/\tilde{N}\Z}}}{\|1_{[N]}\|_{U^s_{\Z/\tilde{N}\Z}}}.
$$
A bounded function $f\colon\N\rightarrow \C$ is called
\define{$U^s$-uniform} if $\| f \|_{U^s_{[N]}}$ converges to zero as $N\to\infty$.
A function $f$ is called \define{uniform} if it is $U^s$-uniform for every $s\geq 1$.
\end{Definition}

It follows from \cite{GT12-2} and \cite{GTZ12} that the \define{M{\"o}bius function} $\mob$ and the \define{Liouville function} $\lio$ are uniform multiplicative functions.

We can state now the dichotomy theorem for $\mconv$.

\begin{Theorem}[Dichotomy theorem for $\mconv$]
\label{thm:dichotomy}
Let $f\in\mconv$. Then either
\begin{enumerate}
[label=(\roman*), ref=(\roman*), leftmargin=*]
\item\label{dich-1}
$f$ is \define{Besicovitch rationally almost periodic},
\end{enumerate}
or
\begin{enumerate}
[label=(\roman{enumi}), ref=(\roman{enumi}), leftmargin=*]
\setcounter{enumi}{1}
\item\label{dich-2}
$f$ is \define{uniform}.
\end{enumerate}
\end{Theorem}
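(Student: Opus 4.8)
The plan is to split $\mconv$ according to whether or not $f$ is \emph{aperiodic}: the aperiodic functions will turn out to be uniform (alternative \ref{dich-2}) and all the others Besicovitch rationally almost periodic (alternative \ref{dich-1}). Recall that a bounded arithmetic function $g$ is aperiodic if $\frac1N\sum_{n\leq N}g(qn+r)\to0$ for all $q,r\in\N$. Since belonging to $\mconv$ means precisely that all of these Ces\`{a}ro averages converge, any $f\in\mconv$ is either aperiodic, or there exist $q,r\in\N$ with $\lim_N\frac1N\sum_{n\leq N}f(qn+r)=c\neq0$; in particular the two cases below are exhaustive.

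If $f$ is aperiodic, then \ref{dich-2} holds, and this is exactly where the higher-order Fourier analysis of Frantzikinakis and Host enters: by \cite{FH17}, an aperiodic multiplicative function bounded in modulus by $1$ satisfies $\|f\|_{U^s_{[N]}}\to0$ as $N\to\infty$ for every $s\geq2$ (the case $s=1$ being immediate, since aperiodicity already gives $\frac1N\sum_{n\leq N}f(n)\to0$), so $f$ is uniform. This is the abstract form of the facts recorded in the excerpt that $\mob$ and $\lio$ are uniform.

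Suppose now that $f$ is not aperiodic, so that $\lim_N\frac1N\sum_{n\leq N}f(qn+r)=c\neq0$ for some $q,r$; I claim that \ref{dich-1} holds. One first upgrades this single nonvanishing progression-average to genuine pretentiousness: by the mean value theory of multiplicative functions along arithmetic progressions (Delange and Daboussi--Delange \cite{DD74,DD82,Delange72}), the nonvanishing of that average forces the existence of a Dirichlet character $\chi$ and a real number $t$ with $\sum_p\frac{1-\Re(f(p)\overline{\chi(p)}\,p^{-it})}{p}<\infty$. Membership of $f$ in $\mconv$ forces $t=0$: for $t\neq0$ the asymptotic for $\frac1N\sum_{n\leq N}f(qn+r)$ supplied by that theory carries the oscillating factor $N^{it}/(1+it)$, so the average can converge only when its main term vanishes identically, which would make $f$ aperiodic and contradict $c\neq0$. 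Hence $f$ pretends to an honest Dirichlet character $\chi$ modulo some $Q$. Since $\chi$ is $Q$-periodic and the class of bounded Besicovitch rationally almost periodic functions is closed under multiplication by bounded periodic functions, it suffices to prove that $g:=f\cdot\overline{\chi}$ is Besicovitch rationally almost periodic and then to reassemble $f$ from $g$, $\chi$, and the finitely many primes dividing $Q$. Now $g$ is multiplicative, bounded in modulus by $1$, satisfies $\sum_p\frac{1-\Re g(p)}{p}<\infty$ --- which in turn yields $\sum_p\frac{|\Im g(p)|^2}{p}<\infty$ --- and its progression-averages still converge, being finite linear combinations of those of $f$; in particular $\frac1N\sum_{n\leq N}g(n)$ converges, which by Hal\'{a}sz's theorem forces the series $\sum_p\frac{\Im g(p)}{p}$, a priori merely conditionally convergent, to converge. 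With these estimates at hand, a Tur\'{a}n--Kubilius-type second-moment argument shows that the ``rough part'' $\prod_{p\mid n,\,p>P}g(p^{v_p(n)})$ of $g(n)$ is close to $1$ in density once $P$ is large, so that $g$ is approximated in the Besicovitch seminorm, to within any prescribed $\vep>0$, by $n\mapsto g\big(\prod_{p\leq P}p^{\min(v_p(n),K)}\big)$ --- which for suitable $P$ and $K$ is periodic, depending on $n$ only modulo $\prod_{p\leq P}p^{K}$. That a multiplicative function which pretends to $1$ and whose mean value exists is Besicovitch rationally almost periodic is precisely the structural input of Bellow and Losert \cite{BL85}. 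Finally, partitioning $\N$ by the $Q$-part $n_Q$ of $n$ and writing $f(n)=f(n_Q)\,\chi(n/n_Q)\,g(n/n_Q)$ on each positive-density fibre realizes $f$ locally as a bounded periodic function times a Besicovitch rationally almost periodic one, so $f$ itself is Besicovitch rationally almost periodic.

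The crux is the non-aperiodic case and, within it, the passage from pretentiousness to Besicovitch rational almost periodicity. Three points require care: invoking the Daboussi--Delange machinery in a form sharp enough to read off $t=0$ from the mere existence of the limits defining $\mconv$; handling the only conditional convergence of $\sum_p\frac{\Im g(p)}{p}$, which obstructs a naive factorization of $g$ over primes and necessitates the concentration estimate for the rough part; and absorbing the finitely many ``ramified'' primes --- those dividing $Q$, together with any small primes at which $g$ is far from $1$ --- into the periodic approximant, which is also the role of the fibrewise reassembly over the $Q$-part. By contrast, the aperiodic case is a direct appeal to \cite{FH17}; and, while the two alternatives are exhaustive, they need not be mutually exclusive --- a multiplicative function vanishing on a set of density $1$ is simultaneously uniform and Besicovitch rationally almost periodic --- which is why the theorem is phrased as an inclusive ``either \dots\ or''.
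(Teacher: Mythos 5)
Your plan is correct and follows the same architecture as the paper's proof: split according to aperiodicity, invoke Frantzikinakis--Host (\cref{thm:tAU}) for the uniform alternative, and in the non-aperiodic case use Delange's criterion (\cref{prop:l10}) to produce $\chi$ and $t$ with $\mathbb{D}(f,\chi\cdot n^{it})<\infty$, force $t=0$ from membership in $\mconv$, and conclude Besicovitch rational almost periodicity from pretentiousness to a (periodic) Dirichlet character. Two of your sub-arguments differ from the paper's and deserve comment. First, for ``$t=0$'' you appeal to the oscillating factor $N^{it}/(1+it)$ in the Hal\'asz--Delange asymptotic; the paper's \cref{lem:mconv-orth-archimedean} reaches the same conclusion more economically by observing that $M(f\cdot\overline{\chi'})$ exists for the character $\chi'$ of \emph{even} modulus induced by $\chi$, and that none of the three cases of Hal\'asz's classification (\cref{thm:FH-thm.2.9}) is then compatible with $t\neq0$. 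Second, where the paper simply cites Daboussi--Delange (\cref{thm:DD82-thm6}: pretending to a character implies Besicovitch almost periodic) and then Daboussi's theorem on irrational spectra (\cref{thm:DD82-thm.1.} together with \cref{cor:RAP-have-rational-spectrum}) to upgrade to \emph{rational} almost periodicity, you re-derive the Daboussi--Delange input by a Tur\'an--Kubilius argument. That is legitimate, but it is Daboussi--Delange's theorem you are reproving, not Bellow--Losert's; their contribution (\cref{thm:besicovitch-approximation-theorem}) is the approximation by Bochner--Fej\'er polynomials with frequencies confined to the spectrum.

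One step is stated too loosely to be correct as written: you claim that convergence of $\frac1N\sum_{n\leq N}g(n)$ alone forces, via Hal\'asz, the convergence of $\sum_p \Im g(p)/p$. This fails in case \ref{item:E79-thm6.3-iii} of \cref{thm:FH-thm.2.9} with $t=0$, i.e.\ when $g(2^k)=-1$ for all $k$: there the mean exists (and equals $0$) while only the real part of $\sum_p (1-g(p))/p$ need converge, so the imaginary part may diverge. The repair uses exactly the information you already noted you have --- convergence of the progression averages of $g$, not just of its plain mean --- by twisting $g$ with the principal character of an even modulus so that the values at powers of $2$ vanish and case \ref{item:E79-thm6.3-iii} is excluded, which forces case \ref{item:E79-thm6.3-i} and hence convergence of the full complex series. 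This is precisely the content of \cref{rem:c-iii-prime}. With that patch, your argument goes through.
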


Given a multiplicative function $f\colon\N\to\C$ and a point $z\in\C$ let $E(f,z)$ denote the set of solutions to the equation $f(n)=z$, i.e.,
\begin{equation*}
E(f,z):=\{n\in\N: f(n)=z\}.
\end{equation*}
We will refer to $E(f,z)$ as a \define{level set} of $f$ and we use $\Dzero$ to denote the collection of all sets of the form $E(f,z)$, where $f$ ranges over all multiplicative functions and $z$ ranges over all complex numbers.

\begin{Example}\label{example:mf}
Examples of sets belonging to $\Dzero$ include many classical sets of number-theoretical origin, such as: the \define{squarefree numbers} $Q:=\{n\in \N: p^2\nmid n~\text{for all primes $p$}\}$, the \define{multiplicatively even numbers} $\ME:=\{n\in\N:\Omega(n)~\text{is even}\}$  and the \define{multiplicatively odd numbers} $\MO:=\{n\in\N:\Omega(n)~\text{is odd}\}$, where $\Omega(n)$ denotes the number of prime factors of $n$ counted with multiplicities. In Subsection \ref{subsec:multiplicative-functions} we provide more examples of sets in $\Dzero$ (see\ \cref{Ex:multiplicative-functions} below). 
\end{Example}

Our main result is a structure theorem for sets belonging to $\Dzero$ in the spirit of \cref{thm:dichotomy}.
To formulate this theorem we first need to introduce set-theoretic analogues of uniform functions and of Besicovitch rationally almost periodic functions. 

\begin{Definition}[Uniform sets and relative uniformity]
\label{def:uniform-sets-relative-uniformity}
Let us call a set $A\subset\N$ \define{uniform} if $d(A):=\lim_{N\to\infty}\frac{|A\cap [N]|}{N}$ exists and the function $\1_A-d(A)$ is uniform in the sense of \cref{def:GN}.

Given $E,R\subset\N$ for which the densities $d(E)$ and $d(R)$ exist, we say that \define{$E$ is uniform relative to $R$} if $E\subset R$ and the function $d(R)\1_{E}-d(E)\1_R$ is uniform. Note that a set $A$ is uniform if and only if it is uniform relative to $\N$.
A more detailed discussion on the notion of relative uniformity can be found in Subsection \ref{subsec:conditional-uniformity}.
\end{Definition}

\begin{Example}
\label{eg:conditional-uniformity-0}
The multiplicatively even numbers $\ME$ and the multiplicatively odd numbers $\MO$ are examples of sets that are uniform (i.e.\ uniform relative to $\N$).
As an example of a set $E$ that is uniform relative to a set $R\subsetneq \N$, one can
take $E:=E(\mob,1)=\{n\in\N:\mob(n)=1\}$ and $R$ to be the set $Q$ of squarefree integers. Indeed, the function
$d(R)\1_E-d(E) \1_{R}$ is a scalar multiple of the M{\"o}bius function $\mob$, which is a uniform function.

\end{Example}

The structure theorem for $\Dzero$, which we will presently formulate, is motivated by \cref{eg:conditional-uniformity-0} and asserts that any set $E\in\Dzero$ is uniform relative to a ``structured'' superset $R$.
In this context, ``structured'' sets are elements of the family $\Drat$, which is introduced in the following definition.

\begin{Definition}
\label{def:rational-D_rat}
\label{def:c0rat}
\label{definition:rational-set}
\
\begin{enumerate}
[label=(\roman*), ref=(\roman*), leftmargin=*]
\item\label{def:itm:rational}
A set $A\subset\N$ is called \define{rational} if for every $\epsilon >0$ there exists a set $B$, that is a union of finitely many arithmetic progression, such that $\overline{d}(A\triangle B)<\epsilon$ (see {\cite[Definition 2.1]{BR02}} and \cite{BKLR16arXiv}).
Equivalently, a set $A$ is rational if and only
if its indicator function is Besicovitch rationally almost periodic.
\item\label{def:itm:D_rat}
Let $\Drat$ be the collection of all level sets $E(f,z)$, where $f$ is a Besicovitch rationally almost periodic multiplicative function and $z$ is an arbitrary complex number. We show in Subsection \ref{sec:D_rat} that any set in $\Drat$ is, in particular, a rational set.
\end{enumerate}
\end{Definition}

Before we state our main result we remark that the density of any rational set exists, which follows quickly from the definition of rationality, and the density of any set in $\Dzero$ also exists, which is a result established in \cite{Ruzsa77} (cf.\ \cref{cor:multiplicative-fibers-have-density} below).

\begin{Maintheorem}[Structure theorem for $\Dzero$]
\label{thm:structure-theorem}
For any set $E\in\Dzero$ with positive density there exists $R\in\Drat$ such that $E$ is uniform relative to $R$. If $d(E)\neq 1$ then $R\in\Drat$ with this property is unique.  
\end{Maintheorem}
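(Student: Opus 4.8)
The plan is to reduce the problem to a dichotomy for the multiplicative function $f$ itself, applied fiber-by-fiber, and then assemble the pieces using the fact that level sets interact well with the group structure coming from the multiplicative Fourier--Bohr picture. First I would record the basic reduction: write $E=E(f,z)$ with $d(E)>0$. Since $f$ is bounded by $1$, the value $z$ lies on the unit circle, and one may replace $f$ by the real-valued (or $[0,\infty)$-valued) multiplicative function $g:=\1_{\{f=z\}}$'s ``pretentious part'' --- more precisely, I would invoke the work of Frantzikinakis and Host \cite{FH17-2,FH17}, together with the Daboussi--Delange results \cite{DD74,DD82,Delange72}, to decompose $\1_E$ along the relevant cyclic-group factors. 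Concretely, for each modulus $q$ the restriction of $f$ to a coset of $q\N$ is again (essentially) multiplicative, so one builds a ``structured'' multiplicative function $\tilde f$ that is Besicovitch rationally almost periodic and agrees with $f$ on the Fourier--Bohr side at all rational frequencies; then $R:=E(\tilde f,z)$ is the candidate element of $\Drat$.

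The key steps, in order, would be: (1) Using the structure theory of \cite{FH17} for multiplicative functions, produce the rationally almost periodic ``approximant'' $\tilde f\in\cm$ and verify $R:=E(\tilde f,z)\in\Drat$ is well-defined and has $E\subset R$ up to a null set --- in fact one arranges $E\subset R$ exactly, by choosing $\tilde f$ so that $\tilde f(n)=z$ whenever $f(n)=z$. (2) Show the densities $d(E)$ and $d(R)$ exist: $d(R)$ exists because $R\in\Drat$ is rational (\cref{def:rational-D_rat}), and $d(E)$ exists by Ruzsa's theorem \cite{Ruzsa77} (cf.\ \cref{cor:multiplicative-fibers-have-density}). (3) Prove the uniformity of $h:=d(R)\1_E-d(E)\1_R$: decompose $h$ using the $U^s$-structure theorem / dichotomy (\cref{thm:dichotomy} in the averaged form from \cite{FH17}), showing that the ``structured component'' of $\1_E$ relative to the periodic factor generated by $R$ is precisely $\tfrac{d(E)}{d(R)}\1_R$, so the remainder $h$ has vanishing $U^s_{[N]}$-norm for every $s\geq 1$. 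The point is that any non-trivial rational-periodic obstruction to uniformity of $\1_E$ must already be visible in $\tilde f$, hence absorbed into $R$. (4) Establish uniqueness when $d(E)\neq 1$: if $R_1,R_2\in\Drat$ both work, then $d(R_2)\1_{R_1}-d(R_1)\1_{R_2}$ is simultaneously rational (a difference of rational sets' indicators, hence Besicovitch rationally almost periodic with a Fourier expansion over $\Q$-frequencies) and uniform (being a linear combination of $d(R_i)\1_E-d(E)\1_{R_i}$ divided by $d(E)$), and a function that is both rational and uniform must be constant a.e.; since $R_1,R_2\subset$ their respective supersets with $E\subset R_i$ and $d(E)<1$, that constant is forced to be $0$, giving $\1_{R_1}=\1_{R_2}$ up to density zero, and then $R_1=R_2$ as elements of $\Drat$ (which are determined up to null sets).

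The main obstacle I expect is step (3): constructing $\tilde f$ so that $E(\tilde f,z)$ captures \emph{exactly} the rational-almost-periodic part of $\1_E$ and no more. The subtlety is that $\1_E$ is a highly nonlinear function of $f$ (an indicator of a single value), so one cannot simply split $f=\tilde f+(\text{uniform})$ and push through; instead one must work on the level of the associated measure-theoretic factor --- the Kronecker-type ``rational'' factor attached to $f$ in the sense of \cite{FH17} --- and argue that conditional expectation of $\1_E$ onto this factor is itself a level set of the (rationally almost periodic) conditional expectation of $f$. This requires the full force of the Frantzikinakis--Host machinery on the interplay between multiplicative functions and nilsequences, specialized to the case where only rational frequencies survive. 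Once that identification is in hand, the vanishing of all $U^s_{[N]}$-seminorms of the remainder follows from the inverse theorem for the Gowers norms \cite{GTZ12} combined with the orthogonality of $f$ to nilsequences away from its ``pretentious'' rational part.
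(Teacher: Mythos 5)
Your high-level architecture (produce a rationally almost periodic companion set $R\supset E$, show $d(R)\1_E-d(E)\1_R$ is uniform, deduce uniqueness from ``rational $+$ uniform $\Rightarrow$ Besicovitch-null'') does match the paper's, but the two steps that carry all the weight are left unproved, and the central one is resolved in the paper by an elementary identity that your proposal replaces with an appeal to machinery that is not shown to apply. The paper does not construct a rationally almost periodic $\tilde f$ with $E\subset E(\tilde f,z)$. Instead it first replaces $f$ by a \emph{concentrated} multiplicative function $g$ with $E(g,z)=E(f,z)$ (\cref{cor:density->concentrated function}, resting on Ruzsa's theorem and on $d(E)>0$), takes $k_\G$ to be the least exponent with $\mathbb{D}(g^{k_\G},\chi)<\infty$ for some Dirichlet character $\chi$, and sets $R:=E(g^{k_\G},z^{k_\G})$ --- a level set of a \emph{power} of $g$ at the value $z^{k_\G}$, so that $E\subset R$ is automatic. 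The nonlinearity obstacle you correctly flag in your step (3) is then overcome not by conditional expectations onto a Kronecker-type factor but by the observation that on $R$ the number $z^{-1}f(n)$ is an $m$-th root of unity (after modifying $g$ so that $g^m=\chi$ exactly, $m=k_\G$), whence $\1_E-\tfrac1m\1_R=\1_R\cdot\bigl(\tfrac1m\sum_{j=1}^{m-1}z^{-j}f^j\bigr)$; each $f^j$ with $1\le j\le m-1$ is aperiodic by minimality of $k_\G$ (via \cref{prop:l10}) and hence uniform by \cref{thm:tAU}, and $\1_R$ is rationally almost periodic, so the right-hand side is uniform. This is \cref{lem:f-to-the-k-is-chi}, followed by two lifting lemmas to undo the modifications of $g$. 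Without this identity or a substitute, your step (3) is a restatement of the theorem: the claim that ``the structured component of $\1_E$ relative to the periodic factor generated by $R$ is precisely $\tfrac{d(E)}{d(R)}\1_R$'' is exactly what has to be proved.

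The uniqueness argument also has a genuine gap at its last step. Your reduction to $d(R_1\triangle R_2)=0$ is fine (and slightly different from the paper's, which multiplies $d(R)\1_E-d(E)\1_R$ by $\1_{R'}$ and compares means), but the conclusion ``$R_1=R_2$ as elements of $\Drat$ (which are determined up to null sets)'' is false: elements of $\Drat$ are honest subsets of $\N$, and a null symmetric difference does not in general force equality --- e.g.\ $\N\setminus\{1,2\}$ and $\N\setminus\{1,3\}$ both lie in $\Dzero$. Upgrading $d(R_1\triangle R_2)=0$ to $R_1=R_2$ is \cref{lem:common-unit-value}, a nontrivial statement whose proof runs Ruzsa's theorem for $\C^2$-valued multiplicative functions, and the hypothesis $d(E)\neq 1$ in the statement of \cref{thm:structure-theorem} exists precisely because this upgrade fails at the density extremes. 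Your proposal treats this as a matter of convention and thereby misses the one place where the hypothesis $d(E)\neq 1$ actually enters.
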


\cref{thm:structure-theorem} allows us to study multiple ergodic averages along level sets of multiplicative functions, such as
\begin{equation}
\label{eqn:erg-ave-along-arth-sets}
\frac{1}{N}\sum_{n=1}^N \1_E(n)~T^{-p_1(n)}f_1 \cdots T^{-p_\ell(n)}f_\ell,
\end{equation}
where $T$ is an invertible measure preserving transformation on a probability space
$\xbm$, $f_1,\ldots,f_\ell \in L^\infty\xbm$, $p_1,\ldots,p_\ell$ are
polynomials with integer coefficients and $E$ belongs to $\Dzero$.

\begin{Definition}
\label{def:averaging-set-polinomial-multiple-recurrence}
Let $E=\{n_1<n_2<\ldots\}$ be a subset of $\N$.
We say that $E$ is an \define{averaging set of recurrence} if
for all invertible measure preserving systems $\xbmt$ and all $A\in\mathcal{B}$ with $\mu(A)>0$,
\begin{equation*}\label{limsuperior-single}
\lim_{N\to\infty}\frac{1}{N}\sum_{j=1}^N
\mu\big(A\cap T^{-n_j}A\big)>0.
\end{equation*}
We say that $E$ is an
\define{averaging set of polynomial multiple recurrence} if
for all invertible measure preserving systems $\xbmt$, all $A\in\mathcal{B}$ with
$\mu(A)>0$, all $\ell\geq 1$ and all polynomials
$p_i\in\Z[x]$, $i=1,\ldots,\ell$, with $p_i(0)=0$, we have
\begin{equation}\label{limsuperior-2}
\lim_{N\to\infty}\frac{1}{N}\sum_{j=1}^N
\mu\big(A\cap T^{-p_1(n_j)}A\cap\ldots\cap T^{-p_\ell(n_j)}A\big)>0.
\end{equation}
\end{Definition}



If $E$ is an averaging set of recurrence whose density $d(E)$ exists and is positive then it follows -- by considering cyclic rotations on finitely many points --
that the density of $E\cap u\N$ also exists and is positive for any positive integer $u$.
This divisibility property is a rather trivial but necessary condition for a positive density set to be ``good'' for averaging recurrence. This leads to the following definition.

\begin{Definition}
\label{def_divisibility-property}
Let $E\subset\N$. We say that $E$ is \emph{divisible}
if $d(E\cap u\N)$ exists and is positive for all $u\in\N$.
\end{Definition}

Expressions similar to \eqref{eqn:erg-ave-along-arth-sets} have also been studied by Frantzikinakis and Host in \cite{FH17-2}, where among other things they obtained the following result.

\begin{Theorem}[{\cite[Theorem 1.2, part (i)]{FH17-2}}]
\label{thm:FH-theorem-1.3-recurrence}
Let $k\in\N$ and let $f$ be a multiplicative function taking values in the set of $k$-th roots of unity. Suppose $f^j$ is aperiodic for all $j\in\{1,\ldots,k-1\}$ (i.e. $\lim_{N\to\infty}\frac1N\sum_{n=1}^N \allowbreak f^j(qn+r)=0$ for all $q\in\N$ and $r\in\N\cup\{0\}$) and $z$ is a point in the image of $f$. Then $E(f,z)=\{n\in\N:f(n)=z\}$ is an averaging set of polynomial multiple recurrence.   
\end{Theorem}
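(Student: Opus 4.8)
The plan is to strip off from $\1_{E(f,z)}$ a ``uniform'' error term and to reduce the remaining assertion to the polynomial multiple recurrence theorem of Bergelson and Leibman \cite{BL96}, using the fact that uniform sequences have no influence on polynomial multiple ergodic averages. Since $f$ takes values in the group of $k$-th roots of unity and $z$ lies in the image of $f$, for every $n$ the number $z^{-1}f(n)$ is again a $k$-th root of unity, so orthogonality of the characters of $\Z/k\Z$ gives
\begin{equation*}
\1_{E(f,z)}(n)=\frac1k\sum_{j=0}^{k-1}\big(z^{-1}f(n)\big)^{j}=\frac1k+\frac1k\sum_{j=1}^{k-1}z^{-j}f^{j}(n).
\end{equation*}
For $1\le j\le k-1$ the function $f^{j}$ is multiplicative, bounded by $1$, and aperiodic by hypothesis; hence $\tfrac1N\sum_{m\le N}f^{j}(qm+r)\to0$ for all $q\in\N$ and all $r\ge0$. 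In particular $d(E(f,z))=\tfrac1k>0$, and, since a periodic function is a linear combination of indicators of arithmetic progressions, $\1_{E(f,z)}-\tfrac1k$ is Besicovitch-orthogonal to every periodic function.

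First I would show that $g:=\1_{E(f,z)}-\tfrac1k$ is a \emph{uniform} function. This can be quoted directly from the Frantzikinakis--Host theory of aperiodic multiplicative functions \cite{FH17}, which shows that each $f^{j}$ with $1\le j\le k-1$ is uniform, so that $g$, a finite linear combination of uniform functions, is uniform. Alternatively one can argue from the paper's own machinery: apply \cref{thm:structure-theorem} to $E:=E(f,z)\in\Dzero$ (which has positive density) to obtain $R\in\Drat$ with $d(R)\1_E-d(E)\1_R$ uniform; since $\1_R$ is rational while $\1_E-\tfrac1k$ is orthogonal to periodic functions, $\1_R-d(R)$ is simultaneously orthogonal to every periodic function and a Besicovitch limit of periodic functions, forcing $\|\1_R-d(R)\|_B=0$, hence $d(R)\in\{0,1\}$, hence $d(R)=1$ because $E\subset R$ has positive density. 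Then $\1_R$ agrees with $\1_\N$ up to a set of density zero, $d(R)\1_E-d(E)\1_R$ differs from $g$ by a density-zero (hence uniform) function, and therefore $g$ is uniform.

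Next, fix an invertible system $\xbmt$, a set $A$ with $\mu(A)>0$, an integer $\ell\ge1$, and polynomials $p_1,\dots,p_\ell\in\Z[x]$ with $p_i(0)=0$, and split
\begin{equation*}
\frac1N\sum_{n=1}^{N}\1_{E(f,z)}(n)\,\mu\Big(A\cap\bigcap_{i=1}^{\ell}T^{-p_i(n)}A\Big)=\frac1k\cdot\frac1N\sum_{n=1}^{N}\mu\Big(A\cap\bigcap_{i=1}^{\ell}T^{-p_i(n)}A\Big)+\frac1N\sum_{n=1}^{N}g(n)\,\mu\Big(A\cap\bigcap_{i=1}^{\ell}T^{-p_i(n)}A\Big).
\end{equation*}
The first term on the right converges, by the convergence theorem for polynomial multiple ergodic averages (Host--Kra, Leibman), and its limit is positive --- indeed bounded below by a constant depending only on $\mu(A)$, $\ell$ and the degrees of the $p_i$ --- by \cite{BL96}. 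So it remains to prove that the second term tends to $0$.

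The crucial input, which is precisely the ingredient supplied by \cite{FH17,FH17-2} and which underlies the method of \cite{BKLR16arXiv}, is the uniformity estimate: for any $q_1,\dots,q_\ell\in\Z[x]$ there are $s=s(q_1,\dots,q_\ell)\in\N$ and $C=C(q_1,\dots,q_\ell)>0$ such that, for every invertible system $\xbmt$, all $h_0,\dots,h_\ell\in L^\infty(\mu)$ bounded by $1$, and every bounded $g\colon\N\to\C$,
\begin{equation*}
\limsup_{N\to\infty}\Big|\frac1N\sum_{n=1}^{N}g(n)\int h_0\cdot T^{-q_1(n)}h_1\cdots T^{-q_\ell(n)}h_\ell\,d\mu\Big|\le C\limsup_{N\to\infty}\|g\|_{U^{s}_{[N]}}.
\end{equation*}
Applying this with $h_i=\1_A$ and $q_i=p_i$, and using that $\|g\|_{U^{s}_{[N]}}\to0$ because $g$ is uniform, the second term above vanishes in the limit, and the theorem follows. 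I expect this last estimate to be the main obstacle: the remaining steps are essentially formal once the cited results are at hand, whereas this estimate rests on a PET/van der Corput induction reducing the average to correlations of $g$ with polynomial nilsequences on the Host--Kra--Leibman characteristic factor, followed by the quantitative comparison between such correlations and Gowers uniformity norms from \cite{GTZ12} --- and one must control the degree $s$ and the constant $C$ uniformly over all measure preserving systems.
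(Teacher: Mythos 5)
Your argument is correct, but be aware that the paper does not actually prove this statement: it is quoted from Frantzikinakis and Host \cite{FH17-2}, and the paper's own contribution is to recover and generalize it as \cref{thm:recurrence-divisibility}, whose proof routes through the structure theorem (\cref{thm:structure-theorem}): one finds $R\in\Drat$ with $E$ uniform relative to $R$, shows $R$ is rational and $R-r$ divisible, and then invokes \cref{thm:BKLR-4.5} for rational sets together with \cref{l11}. What you wrote is the direct specialization of that scheme to the case at hand: since every $f^j$, $1\le j\le k-1$, is aperiodic, the structured component is trivial ($R=\N$, $d(E)=1/k$), the expansion $\1_{E(f,z)}=\frac1k+\frac1k\sum_{j=1}^{k-1}z^{-j}f^j$ exhibits $g=\1_{E(f,z)}-\frac1k$ as a linear combination of functions that are uniform by \cref{thm:tAU}, and the role of \cref{thm:BKLR-4.5} degenerates to Bergelson--Leibman positivity plus Host--Kra/Leibman convergence for the unweighted average. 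The estimate you flag as the main obstacle is precisely \cref{l11} (Lemma 3.5 of \cite{FHK13}), which the paper quotes and uses for exactly this purpose in the proof of \cref{thm:recurrence-divisibility}, so it can simply be cited and nothing is missing; also, your first argument for uniformity of $g$ (directly from \cref{thm:tAU} and the triangle inequality for the Gowers seminorms) is cleaner than the alternative detour through the structure theorem, which in this setting only reconfirms $d(R)=1$. The one cosmetic point to add is the standard passage between the weighted average $\frac1N\sum_{n\le N}\1_E(n)\mu(\cdots)$ and the average over the enumeration $\{n_1<n_2<\ldots\}$ appearing in \cref{def:averaging-set-polinomial-multiple-recurrence}, which is harmless because $d(E)=1/k$ exists and the weighted limit exists. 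What the paper's heavier route buys is generality: it treats level sets of arbitrary multiplicative functions, where the structured superset $R$ is genuinely nontrivial and one must work relative to it.
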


It is straightforward to verify that if $f$ and $z$ are as in \cref{thm:FH-theorem-1.3-recurrence} then the level set $E(f,z)$ is divisible\footnote{Indeed, if $f$ takes values in the $k$-th roots of unity $\{1,\zeta,\zeta^2,\ldots ,\zeta^{k-1}\}$ then $\1_{E(f,\zeta^i)}(n)=\frac{1}{k}\sum_{j=0}^{k-1}f^j(n)\zeta^{-ij}$ and therefore, using the fact that $f^j$ is aperiodic for $j\in\{1,\ldots,k-1\}$, we get that $d(E(f,\zeta^i)\cap u\N)=\lim_{N\to\infty}\frac{1}{k}\sum_{j=0}^{k-1}\frac{1}{N}\sum_{n=1}^N f^j(n)\zeta^{-ij}\1_{u\N}(n)=\frac{1}{uk}$.}.
In light of this fact, the next result, which is obtained by combining \cref{thm:structure-theorem} with the results obtained by the authors in \cite{BKLR16arXiv}, can be viewed as a generalization of \cref{thm:FH-theorem-1.3-recurrence}.

\begin{Maincorollary}
\label{thm:recurrence-divisibility}
Let $E\in\Dzero$ have positive density and let $r\in\N\cup\{0\}$. Then the following are equivalent:
\begin{itemize}
\item
$E-r$ is divisible;
\item
$E-r$ is an averaging set of recurrence;
\item
$E-r$  is an averaging set of polynomial multiple recurrence.
\end{itemize}
\end{Maincorollary}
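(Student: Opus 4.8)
The plan is to prove the cyclic chain of implications: averaging polynomial multiple recurrence $\Rightarrow$ averaging recurrence $\Rightarrow$ divisibility $\Rightarrow$ averaging polynomial multiple recurrence, where only the last implication carries real content. The first implication is trivial (take $\ell=1$ and $p_1(x)=x$ in \eqref{limsuperior-2}). The second is the observation already recorded in the paragraph preceding \cref{def_divisibility-property}: if $E-r$ is an averaging set of recurrence and has positive density, then testing \eqref{limsuperior-single} against the measure preserving system given by the cyclic rotation $x\mapsto x+1$ on $\Z/u\Z$ with $A=\{0\}$ forces $\overline d((E-r)\cap u\N)>0$; together with the fact that $E\in\Dzero$ has a density (by \cite{Ruzsa77}, cf.\ \cref{cor:multiplicative-fibers-have-density}) and hence so does every $(E-r)\cap u\N$, we get divisibility. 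So the burden is the implication \emph{divisible $\Rightarrow$ averaging polynomial multiple recurrence}.

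For that implication I would invoke \cref{thm:structure-theorem}: since $E\in\Dzero$ has positive density, there is $R\in\Drat$ with $E$ uniform relative to $R$, i.e.\ $\1_E=\tfrac{d(E)}{d(R)}\1_R+g$ where $g:=\1_E-\tfrac{d(E)}{d(R)}\1_R$ is a uniform function (after normalizing $d(R)\1_E-d(E)\1_R$). Shifting by $r$, one gets $\1_{E-r}=\tfrac{d(E)}{d(R)}\1_{R-r}+g(\cdot+r)$, and both rationality of $R-r$ (rational sets are closed under shifts) and uniformity of $g(\cdot+r)$ (the Gowers seminorms are shift-invariant in the limit) persist. Now plug this decomposition into the multiple ergodic average \eqref{eqn:erg-ave-along-arth-sets} with $f_1=\cdots=f_\ell=\1_A$. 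The uniform part $g(\cdot+r)$ contributes $0$ in the limit: this is exactly the kind of statement proved in \cite{BKLR16arXiv} — along the Host–Kra/nilfactor characteristic factors the polynomial multiple average is controlled by a finite Gowers–Host–Kra seminorm, and a uniform weight $g(\cdot+r)$ kills such averages (this uses that $\sup$ over the relevant $U^s$-seminorms along intervals $[N]$ tends to $0$, which is precisely uniformity). Hence the limit of \eqref{limsuperior-2} for the sequence enumerating $E-r$ equals $\tfrac{d(E)}{d(R)}$ times the corresponding limit with weight $\1_{R-r}$, a rational (hence Besicovitch rationally almost periodic) weight.

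It then remains to see that this rational-weighted average is strictly positive whenever $E-r$ is divisible. Here I would again cite \cite{BKLR16arXiv}: for a rational weight $\1_{R-r}$ the average in \eqref{limsuperior-2} can be evaluated by approximating $R-r$ by a union of arithmetic progressions, reducing to the polynomial Szemerédi theorem \cite{BL96} applied inside each progression, and the outcome is positive precisely when $R-r$ (equivalently $E-r$, since they differ by a uniform — in particular density-zero-on-no-progression — set after scaling) meets every $u\N$ in positive density. Divisibility of $E-r$ is exactly this condition, so we conclude. The main obstacle, and the step where the weight of the argument lies, is the passage through the characteristic-factor machinery: one must be sure that the polynomial multiple average \eqref{limsuperior-2} with a bounded weight $w$ depends on $w$ only through finitely many Gowers–Host–Kra-type seminorms of $w$ along $[N]$, so that the uniform part is genuinely negligible and the rational part genuinely governs the limit. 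Everything needed for this is already carried out in \cite{BKLR16arXiv}; the role of \cref{thm:structure-theorem} here is to supply the decomposition $\1_E = (\text{rational}) + (\text{uniform})$ that makes those techniques applicable to an arbitrary level set in $\Dzero$, and the role of divisibility is to certify positivity of the rational main term.
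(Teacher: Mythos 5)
Your proposal is correct and follows essentially the same route as the paper: reduce to ``divisible $\Rightarrow$ averaging polynomial multiple recurrence,'' use \cref{thm:structure-theorem} to write $\1_{E-r}$ as a rational main term $\tfrac{d(E)}{d(R)}\1_{R-r}$ plus a uniform error, discard the uniform part via the Gowers-seminorm control of polynomial multiple averages (\cref{l11}), and obtain positivity of the rational-weighted average from the divisibility of $R-r$ (which follows from $E-r\subset R-r$ and rationality of $R$) together with \cref{thm:BKLR-4.5}. The paper's proof is exactly this argument, with \cref{prop:Drat} supplying the rationality of $R$.
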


In view of \cref{thm:recurrence-divisibility}, it is of interest to determine for which integers $r$ the set $E-r$ is divisible.

\begin{Example}\label{eg:motivation-for-thm:recurrence-enhanced}
Consider the sets $E:=E(\mob,1)=\{n\in\N:\mob(n)=1\}$ and $R:=Q$ from \cref{eg:conditional-uniformity-0}.
One can show that $E-r$ is divisible if and only if $r\in Q$. An analogous fact is true for the level set $E(\mob,-1)=\{n\in\N:\mob(n)=-1\}$.
\end{Example} 

The next proposition asserts that a phenomenon very similar to the one showcased in \cref{eg:motivation-for-thm:recurrence-enhanced} holds for any $E\in\Dzero$ of positive density.

\begin{Proposition}
\label{thm:recurrence-enhanced}
Suppose $E\in\Dzero$ has positive density. Let $R\in\Drat$ be as guaranteed by \cref{thm:structure-theorem}. Then for all $r\in R$ the set $E-r$ is divisible.
\end{Proposition}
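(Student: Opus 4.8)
The plan is to reduce the assertion, via \cref{thm:structure-theorem}, to a purely multiplicative statement about the function generating $R$, and then to settle that statement using multiplicativity together with classical mean-value theorems. First I would unwind the definition: since the bijection $n\mapsto n+r$ identifies $(E-r)\cap u\N$ with $\{e\in E:e\equiv r\bmod u\}$ up to one point, $E-r$ is divisible if and only if $d\big(E\cap(r+u\N)\big)$ exists and is positive for every $u\in\N$. Because $E\subseteq R$ we have $d(R)\ge d(E)>0$, so by \cref{thm:structure-theorem} we may write $\1_E=\tfrac{d(E)}{d(R)}\1_R+\tfrac1{d(R)}g$ with $g:=d(R)\1_E-d(E)\1_R$ uniform. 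Uniformity of $g$ gives in particular $\|g\|_{U^2_{[N]}}\to0$, and since the $U^2_{[N]}$-seminorm dominates, up to an absolute constant, $\sup_{\alpha\in[0,1)}\big|\tfrac1N\sum_{n\le N}g(n)e(n\alpha)\big|$, while $\1_{\{n\equiv a\bmod u\}}$ is a fixed linear combination of the phases $n\mapsto e(jn/u)$, we obtain $\tfrac1N\sum_{n\le N,\,n\equiv a\bmod u}g(n)\to0$ for all $a,u$. As $R$ is rational, $\tfrac1N\sum_{n\le N,\,n\equiv a\bmod u}\1_R(n)\to d\big(R\cap(a+u\N)\big)$; hence $d\big(E\cap(a+u\N)\big)$ exists and equals $\tfrac{d(E)}{d(R)}d\big(R\cap(a+u\N)\big)$. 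Since $d(E),d(R)>0$, the proposition is reduced to showing that, for $R=E(h,w)\in\Drat$ with $d(R)>0$ and any $r\in R$, one has $d\big(R\cap(r+u\N)\big)>0$ for every $u$.

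Next I would use the multiplicativity of $h$. If $d(R)=1$ the claim is trivial. If $w=0$, then $r\ge2$ (as $h(1)=1$) and $h(r)=0$ forces $h(p_*^{e_*})=0$ for some prime power $p_*^{e_*}$ exactly dividing $r$; every $n$ with $v_{p_*}(n)=e_*$ then satisfies $h(n)=0$, so the whole arithmetic progression $\{n:n\equiv r\bmod u\,p_*^{e_*+1}\}$ is contained in $R\cap(r+u\N)$, which therefore has density at least $(u\,p_*^{e_*+1})^{-1}>0$. If $w\ne0$, put $v:=\operatorname{lcm}\big(u,\prod_{p\mid r}p\big)$; for any $k\equiv1\bmod v$ one has $\gcd(k,r)=1$ and $rk\equiv r\bmod u$, so if in addition $h(k)=1$ then $h(rk)=h(r)h(k)=w$, i.e.\ $rk\in R\cap(r+u\N)$. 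As $k\mapsto rk$ is injective, $d\big(R\cap(r+u\N)\big)\ge\tfrac1r\,\underline{d}\big(\{k\equiv1\bmod v:h(k)=1\}\big)$, and it remains to prove that this last lower density is positive.

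Thus everything comes down to the following statement, which is where the real content lies: a Besicovitch rationally almost periodic multiplicative function $h$ that possesses a level set of positive density has $d\big(E(h,1)\cap(1+v\N)\big)>0$ for every $v\in\N$. The point is that possessing a positive-density level set excludes the oscillatory/aperiodic behaviour typified by $\lio$ and, via the analysis of $\Drat$ in Subsection~\ref{sec:D_rat} together with the classical work of Daboussi--Delange and Wirsing underlying \cref{thm:dichotomy}, forces $h$ to be pretentious to a Dirichlet character $\chi$ modulo some $q_0$, with well-controlled values on prime powers; one then reduces to the nonnegative case by treating $|h|$ and the unitary part of $h$ separately, and Wirsing's mean-value theorem shows that, once $v$ is replaced by a multiple of $q_0$ (which costs only finitely many Euler factors), the Euler product computing $d\big(E(h,1)\cap(1+v\N)\big)$ converges to a positive number. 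Establishing this positivity—equivalently, identifying exactly which level sets of which rationally almost periodic multiplicative functions have positive density—is the main obstacle; the other ingredients ($U^2$-seminorms controlling Fourier coefficients along progressions, and the elementary multiplicative bookkeeping above) are routine.
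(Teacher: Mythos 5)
Your first step --- transferring divisibility from $R$ to $E$ --- is correct and is essentially the paper's own argument: the paper applies \cref{prop:interplay-uniform-almost-periodic}\ref{itm:prop:interplay-uniform-almost-periodic-b0} to see that $d(R)\1_{E\cap(q\N+r)}-d(E)\1_{R\cap(q\N+r)}$ is uniform and hence has zero mean, which is the same identity $d(R)\,d(E\cap(q\N+r))=d(E)\,d(R\cap(q\N+r))$ you extract via $U^2$-control of Fourier coefficients along progressions. The problem is everything after that. You have reduced the Proposition to two facts about $R=E(h,w)\in\Drat$: that $R$ is rational (which you assert without proof in order to know $d(R\cap(a+u\N))$ exists), and that $d(R\cap(r+u\N))>0$ for $r\in R$. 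These are precisely conditions \ref{itm:Drat-1} and \ref{itm:Drat-2} of the paper's \cref{prop:Drat} (and its \cref{cor:D_rat}), whose proof occupies most of Subsection \ref{sec:D_rat}; neither is automatic --- a level set of a Besicovitch rationally almost periodic function need not be rational in general (see the remark after \cref{prop:Drat}), so multiplicativity must enter in an essential way.

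Your attempted derivation of the second fact does not close. The reduction to $d\bigl(\{k\equiv 1\bmod v: h(k)=1\}\bigr)>0$ is fine (and your $w=0$ case is correct), but the final step --- ``Wirsing's mean-value theorem shows that \dots the Euler product computing $d\bigl(E(h,1)\cap(1+v\N)\bigr)$ converges to a positive number'' --- is not an argument: $\1_{E(h,1)}\cdot\1_{1+v\N}$ is not a multiplicative function, its mean is not an Euler product, and \cref{thm:wirsing} computes means of real-valued multiplicative functions, not densities of level sets along progressions. Moreover, even granting that $h$ pretends to be a Dirichlet character $\chi$, the condition $\mathbb{D}(h,\chi)<\infty$ does not by itself yield $\sum_{h(p)\neq\chi(p)}1/p<\infty$, which is what one actually needs; the paper obtains this by first replacing $h$ with a concentrated function via \cref{cor:density->concentrated function} so that $1-\Re\bigl(h(p)\overline{\chi(p)}\bigr)$ is bounded below by a positive constant $\rho$ whenever the values differ and lie in the finite concentration group. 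The positivity itself then comes not from an Euler product but from the inner regularity of the sets $S_P$ (\cref{lem:-AB}, imported from \cite{BR02}) combined with the lifting argument of \cref{lem:lifting-trick-7}: on squarefree integers built from the primes where $h$ agrees with $\chi$, the level set coincides with a union of arithmetic progressions intersected with an inner regular set. You correctly identify this positivity as ``the main obstacle,'' but you do not overcome it, so the proof as written is incomplete.
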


\begin{Remark}
Note that in \cref{eg:motivation-for-thm:recurrence-enhanced} one has that if $r\notin Q$ then $E-r$ is not divisible. It is therefore natural to ask whether for any $E\in\Dzero$ and $r\notin R$ (where $R\in\Drat$ is as guaranteed by \cref{thm:structure-theorem}) the shift $E-r$ is not divisible. The answer, however, is negative (see \cref{eg:counterexample-selfshifts} below).
\end{Remark}

In \cite{BR02}, it was proven by the first author and Ruzsa that every self-shift of the set of squarefree numbers $Q$ (i.e. any set of the form $Q - r$ for $r \in  Q$) is good for polynomial multiple recurrence. Combining \cref{thm:recurrence-divisibility} and \cref{thm:recurrence-enhanced} yields a result of similar nature for all sets of positive density belonging to $\Dzero$.

\begin{Maincorollary}
\label{thm:recurrence}
Suppose $E\in\Dzero$ has positive density. Then every self-shift of $E$ is an averaging set of polynomial multiple recurrence. 
\end{Maincorollary}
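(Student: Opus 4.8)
The plan is to obtain \cref{thm:recurrence} as a formal consequence of three results already in hand: the structure theorem (\cref{thm:structure-theorem}), the divisibility of shifts by elements of the rational superset (\cref{thm:recurrence-enhanced}), and the equivalence between divisibility and averaging polynomial multiple recurrence (\cref{thm:recurrence-divisibility}). The single observation that links these together is that if $E$ is uniform relative to $R$ then, by \cref{def:uniform-sets-relative-uniformity}, one has the inclusion $E\subseteq R$; consequently a self-shift $E-r$ with $r\in E$ is, in particular, a shift of $E$ by a point of the rational set $R$, and for such shifts divisibility has already been established.

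Concretely, I would argue as follows. Fix $E\in\Dzero$ of positive density and let $r\in E$ be arbitrary. By \cref{thm:structure-theorem} there is a set $R\in\Drat$ such that $E$ is uniform relative to $R$; by \cref{def:uniform-sets-relative-uniformity} this forces $E\subseteq R$, so in particular $r\in R$. Applying \cref{thm:recurrence-enhanced} to this $R$, we conclude that $E-r$ is divisible. Since $E\in\Dzero$ has positive density and $r\in\N\cup\{0\}$, \cref{thm:recurrence-divisibility} applies to $E$ and $r$ and yields that $E-r$ is an averaging set of polynomial multiple recurrence. As $r\in E$ was arbitrary, every self-shift of $E$ is an averaging set of polynomial multiple recurrence, which is the assertion.

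Because the argument is purely deductive, there is no genuine obstacle internal to it; all the substance sits in the three ingredients it invokes. If one wants to isolate the hardest input, it is the implication ``divisible $\Rightarrow$ averaging polynomial multiple recurrence'' in \cref{thm:recurrence-divisibility}: there the relative uniformity of $E$ over $R$ — i.e.\ the uniformity of $d(R)\1_E-d(E)\1_R$ — must be converted into positivity of the polynomial multiple ergodic averages in \eqref{limsuperior-2}, by simultaneously extracting a positive ``structured'' contribution from the rational set $R$ (via a polynomial Szemer\'edi-type argument along arithmetic progressions, in the spirit of \cite{BR02,BL96}) and showing that the ``uniform'' part contributes negligibly to these averages; this is precisely where the technique developed in \cite{BKLR16arXiv}, itself resting on \cite{FH17-2,FH17}, is used. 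The only point genuinely specific to \cref{thm:recurrence}, as opposed to \cref{thm:recurrence-divisibility}, is the elementary remark that self-shifts are shifts by elements of $R$ — that is, the inclusion $E\subseteq R$ supplied by \cref{def:uniform-sets-relative-uniformity}.
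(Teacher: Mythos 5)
Your proposal is correct and is exactly the paper's intended argument: the paper derives \cref{thm:recurrence} by combining \cref{thm:recurrence-divisibility} with \cref{thm:recurrence-enhanced}, using precisely the observation that $E\subseteq R$ (from \cref{def:uniform-sets-relative-uniformity} via \cref{thm:structure-theorem}) so that every self-shift is a shift by an element of $R$ and is therefore divisible, hence an averaging set of polynomial multiple recurrence.
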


\cref{thm:recurrence}, in turn, implies -- via Furstenberg's correspondence principle (see \cite[Theorem 1.1]{Bergelson87}) -- the following combinatorial result (cf. \cite{BL96} and \cite[Proposition 4.2]{BKLR16arXiv}).

\begin{Maincorollary}
\label{cor:comb}
Let $E$ be a set that belongs to $\Dzero$ and suppose $E$ has positive density.
Then for any set $D\subset \N$ with positive upper density, any polynomials $p_i\in\Q[t]$, $i=1,\ldots,\ell$, which satisfy
$p_i(\Z)\subset\Z$ and $p_i(0)=0$ for all $i\in\{1,\ldots,\ell\}$, and any $r\in E$
there exists $\beta>0$ such that the set
$$
\left\{n\in E-r:\overline{d}\Big(
D\cap (D-p_1(n))\cap \ldots\cap(D-p_\ell(n))
\Big)>\beta \right\}
$$
has positive lower density.
\end{Maincorollary}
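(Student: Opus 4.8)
The plan is to deduce this from \cref{thm:recurrence} by combining Furstenberg's correspondence principle with an elementary averaging argument; the genuinely hard work is entirely contained in \cref{thm:recurrence} (hence, ultimately, in \cref{thm:structure-theorem} and the recurrence results of \cite{BKLR16arXiv}), which we are free to quote. First I would fix $D\subset\N$ with $\overline{d}(D)>0$, polynomials $p_i\in\Q[t]$ with $p_i(\Z)\subset\Z$ and $p_i(0)=0$, and $r\in E$, and apply Furstenberg's correspondence principle (see \cite[Theorem 1.1]{Bergelson87}) to $D$. This produces an invertible measure preserving system $\xbmt$ and a set $A\in\mathcal{B}$ with $\mu(A)=\overline{d}(D)>0$ such that, applying the principle to the shifts $0,p_1(n),\dots,p_\ell(n)$,
\begin{equation*}
\overline{d}\Big(D\cap(D-p_1(n))\cap\ldots\cap(D-p_\ell(n))\Big)\ \geq\ \mu\Big(A\cap T^{-p_1(n)}A\cap\ldots\cap T^{-p_\ell(n)}A\Big)
\end{equation*}
for every $n\in\N$.

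Next I would invoke \cref{thm:recurrence}: since $E\in\Dzero$ has positive density and $r\in E$, the self-shift $E-r$ is an averaging set of polynomial multiple recurrence. (One should note here that, although \cref{def:averaging-set-polinomial-multiple-recurrence} is phrased for $p_i\in\Z[x]$, the proof of \cref{thm:recurrence} applies verbatim to integer-valued polynomials $p_i\in\Q[t]$, which is the generality needed; alternatively one first restricts $n$ to a suitable residue class $d\Z$ to reduce to integer coefficients.) Enumerating the positive elements of $E-r$ as $\{n_1<n_2<\ldots\}$ (there are only finitely many non-positive ones) and applying \eqref{limsuperior-2} to the system $\xbmt$ and the set $A$ above, the limit
\begin{equation*}
c:=\lim_{N\to\infty}\frac1N\sum_{j=1}^{N}\mu\Big(A\cap T^{-p_1(n_j)}A\cap\ldots\cap T^{-p_\ell(n_j)}A\Big)
\end{equation*}
exists and satisfies $c>0$. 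I would then set $\beta:=c/2>0$.

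To conclude, write $a_j:=\mu(A\cap T^{-p_1(n_j)}A\cap\ldots\cap T^{-p_\ell(n_j)}A)\in[0,1]$, so that $\frac1N\sum_{j\le N}a_j\to c$. Splitting each partial sum according to whether $a_j>\beta$ or $a_j\le\beta$ and using $a_j\le 1$ shows that the index set $S:=\{j\in\N:a_j>\beta\}$ satisfies $\underline{d}(S)\geq\frac{c/2}{1-c/2}>0$. By the displayed inequality, $j\in S$ forces $\overline{d}\big(D\cap(D-p_1(n_j))\cap\ldots\cap(D-p_\ell(n_j))\big)>\beta$, so $\{n_j:j\in S\}$ is contained in the set appearing in the statement. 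Finally, since $E\in\Dzero$ its density exists (by \cite{Ruzsa77}), whence $d(E-r)=d(E)>0$ and $n_j\sim j/d(E)$; comparing counting functions then gives $\underline{d}(\{n_j:j\in S\})\geq d(E)\,\underline{d}(S)>0$, which is the desired assertion (with $\beta=c/2$).

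The main obstacle is \cref{thm:recurrence} itself, which we are assuming; beyond that, the only points requiring a little care are the remark that the polynomial recurrence must be available for integer-valued (not merely integer-coefficient) polynomials, and the passage from a Cesàro average over the enumeration $\{n_1<n_2<\ldots\}$ of $E-r$ to a lower-density statement about $E-r$ as a subset of $\N$ — both of which are routine.
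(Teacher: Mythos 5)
Your proposal is correct and follows exactly the route the paper intends: the paper derives \cref{cor:comb} from \cref{thm:recurrence} via Furstenberg's correspondence principle and leaves the routine details (the averaging argument producing $\beta$, and the passage from density of indices $j$ to lower density of $\{n_j\}$ in $\N$ using $d(E-r)=d(E)>0$) to the reader, which you have filled in correctly. Your remark about extending from $\Z[x]$ to integer-valued polynomials in $\Q[t]$ is a legitimate point the paper glosses over, and your resolution of it is fine.
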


\paragraph{Structure of the paper:}
\

In \cref{sec:prelim} we review basic results and facts regarding multiplicative functions, almost periodic functions and the Gowers uniformity seminorms, which are needed in the subsequent sections.

In \cref{sec:dt} we discuss the dichotomy between structure and randomness for multiplicative functions belonging to the class $\mconv$ and provide a proof of \cref{thm:dichotomy}.

In \cref{sec:structure-theorem} we discuss in detail the notion of relative uniformity and prove \cref{thm:structure-theorem}.

Finally, \cref{sec:ET} contains applications of our main results to the theory of multiple recurrence. In particular, we provide proofs for \cref{thm:recurrence-divisibility} and \cref{thm:recurrence-enhanced}.


\paragraph{Acknowledgements:}
We thank the anonymous referee for valuable remarks and suggestions.
We also thank Viktor Losert for providing several helpful comments and additional references regarding \cref{thm:besicovitch-approximation-theorem} in Subsection \ref{subsec:AP}.

\section{Preliminaries}\label{sec:prelim}

In this section we present some basic results and ideas, which will be used in the subsequent sections. In certain instances the classical results are presented in a slightly modified form and in those cases proofs are provided.

\subsection{Multiplicative functions}
\label{subsec:multiplicative-functions}

Recall that $\cm$ denotes the set of all multiplicative functions $f:\N\to\C$ with $|f(n)|\leq1$ for all $n\in\N$.
The set $\cm$ can be endowed with a ``distance'' function $\mathbb{D}:\cm\times\cm\to[0,\infty]$, which serves as a useful tool for cataloging the class of multiplicative functions bounded in modulus by $1$. 
Let $\P$ denote the set of prime numbers. For $f,g\in\cm$ define
\[
\mathbb{D}(f,g):= \sqrt{\sum_{p\in\P} \frac{1}{p} \Big(1- \Re(f(p)\overline{g(p)})\Big)}.
\]

\begin{Remark}\label{rem:basic-properties-of-D}
Let us list some important properties of $\mathbb{D}$. For more details and proofs the reader is referred to the book of Granville and Soundararajan \cite{GSdraft}.
\begin{enumerate}
[label=\text{(\arabic*)}, ref=\text{(\arabic*)}, leftmargin=*]
\item
$\mathbb{D}(f,g)=\mathbb{D}(g,f)=\mathbb{D}(\ov{f},\ov{g})$;
\item\label{item:rem:basic-properties-of-D-2}
$\mathbb{D}$ satisfies the triangle inequality, $\mathbb{D}(f,g)\leq \mathbb{D}(f,h)+\mathbb{D}(h,g)$;
\item\label{item:rem:basic-properties-of-D-3}
$m \mathbb{D}(f,g)\geq \mathbb{D}(f^m,g^m)$ for all $m\in\N$;
\item\label{item:rem:basic-properties-of-D-5}
$\mathbb{D}(f,g)<\infty$ implies $\mathbb{D}(|f|,|g|)<\infty$.
\end{enumerate}
\end{Remark}

When $\mathbb{D}(f,g)<\infty$ then, borrowing the terminology from \cite{GSdraft}, we say that $f$ \emph{pretends} to be $g$. In this case, many properties of $f$ are shared by $g$ and vice versa. For instance, we will see later that if $f$ pretends to be $g$, then $f$ is aperiodic if and only if $g$ is aperiodic (see\ \cref{def:aperiodic} and \cref{rem:uniform-aperiodic-D} below).

In Subsections \ref{sec:prelim-mean-value-thms}, \ref{subsec:AP} and \ref{sec:prelim-2} below we will see that one can often determine whether a multiplicative function $f\in\cm$:
\begin{itemize}
\item[--]
has a mean value,
\item[--]
is Besicovitch almost periodic,
\item[--]
is aperiodic, or
\item[--]
is uniform
\end{itemize}
by measuring the $\mathbb{D}$-distance between $f$ and \define{Archimedean characters} and \define{Dirichlet characters}.  
An \define{Archimedean character} is a function of the form $n\mapsto n^{it}=e^{it\log n}$ with $t\in\R$. Any Archimedean character is a completely multiplicative element of $\cm$.
An arithmetic function $\chi$ is called a \define{Dirichlet character} if there exists a number $d\in\N$, called a \define{modulus of $\chi$}, such that:
\begin{enumerate}
[label=\text{(\arabic*)\quad}, ref=\text{(\arabic*)}, leftmargin=*]
	\item 	$\chi(n+d)=\chi(n)$ for all $n\in\N$;
	\item 	$\chi(n)=0$ whenever $\gcd(d,n)>1$, and $\chi(n)$ is a $\tot(d)$-th root of unity whenever $\gcd(d,n)=1$, where $\tot$ denotes Euler's totient function;
	\item	$\chi(nm)=\chi(n)\chi(m)$ for all $n,m\in\N$.
\end{enumerate}
Any Dirichlet character is periodic and completely multiplicative.\footnote{The converse of this statement is also true: Any periodic and completely multiplicative function is a Dirichlet character.}
We also remark that $\chi\colon \N\to\C$ is a Dirichlet character of modulus $k$ if and only if there exists a group character $\widetilde{\chi}$ of the multiplicative group $(\Z/k\Z)^*$ such that $\chi(n)=\widetilde{\chi}(n~{\rm mod}~k)$ for all $n\in\N$.
The Dirichlet character determined by the trivial (constant equal to $1$) character of $(\Z/k\Z)^*$ is called the {\em principal character of modulus $k$}. It is denoted by $\chi_1$. Note that if $d|k$ and $\chi$ is a Dirichlet character of modulus $d$ then
\begin{equation}\label{eq:induced-character}
\chi':=\chi\cdot \chi_1
\end{equation}
is a Dirichlet character of modulus $k$.
Throughout this paper we reserve the letter $\chi$
to denote Dirichlet characters.

\begin{Lemma}[{cf.\ \cite[Lemma 4.6]{GSdraft} and \cite[Remark after Lemma 2.2]{FKLdraft}}]
\label{item:rem:basic-properties-of-D-4}
\label{DirArch}
For every $t\neq0$ and every Dirichlet character $\chi$ we have $\mathbb{D}(\chi,n^{it})=\infty$. In particular, for $t\neq 0$ one has $\mathbb{D}(1,n^{it})=\infty$.
\end{Lemma}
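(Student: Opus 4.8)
The plan is to reduce the general statement to the special case $\mathbb{D}(1,n^{is})=\infty$ for $s\neq 0$ (which is precisely the ``in particular'' assertion) and then to establish that special case by a standard analytic argument. For the reduction, suppose toward a contradiction that $\mathbb{D}(\chi,n^{it})<\infty$ for some Dirichlet character $\chi$ of modulus $d$ and some $t\neq 0$. Since $\chi$ takes values in the $\varphi(d)$-th roots of unity (together with $0$), we have $\chi^{\varphi(d)}=\chi_1$, the principal character modulo $d$; applying \cref{rem:basic-properties-of-D}\ref{item:rem:basic-properties-of-D-3} with $m=\varphi(d)$ gives $\mathbb{D}(\chi_1,n^{i\varphi(d)t})\leq \varphi(d)\,\mathbb{D}(\chi,n^{it})<\infty$. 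Since $\chi_1(p)=1$ for all but the finitely many primes $p\mid d$, the series $\mathbb{D}(\chi_1,n^{is})^2$ and $\mathbb{D}(1,n^{is})^2$ differ by the finite quantity $\sum_{p\mid d}\tfrac1p\cos(s\log p)$, so $\mathbb{D}(1,n^{i\varphi(d)t})<\infty$ as well. As $s:=\varphi(d)t\neq 0$, this contradicts the special case, completing the reduction.

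It then remains to show that $\sum_{p}\frac{1-\cos(s\log p)}{p}=\infty$ for every real $s\neq 0$ (note that $\mathbb{D}(1,n^{is})^2$ is exactly this series, and all of its terms are nonnegative). For $\sigma>1$ the Euler product for the Riemann zeta function gives $\sum_p p^{-\sigma}=\log\zeta(\sigma)+O(1)$ and $\sum_p p^{-\sigma-is}=\log\zeta(\sigma+is)+O(1)$ as $\sigma\to 1^+$, where the $O(1)$ terms absorb the absolutely (and locally uniformly) convergent contribution of prime powers $p^k$ with $k\geq 2$. Taking real parts and subtracting,
$$
\sum_p \frac{1-\cos(s\log p)}{p^\sigma}=\log|\zeta(\sigma)|-\log|\zeta(\sigma+is)|+O(1).
$$
As $\sigma\to 1^+$ the first term tends to $+\infty$ because $\zeta$ has a simple pole at $1$, while the second term tends to the finite limit $\log|\zeta(1+is)|$: indeed $1+is$ is not the pole (as $s\neq 0$) and $\zeta(1+is)\neq 0$ by the classical non-vanishing of $\zeta$ on the line $\Re(s)=1$. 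Hence the left-hand side tends to $+\infty$. Since its terms are nonnegative and $p^{-\sigma}\uparrow p^{-1}$ as $\sigma\downarrow 1$, the monotone convergence theorem yields $\sum_p\frac{1-\cos(s\log p)}{p}=\lim_{\sigma\to 1^+}\sum_p\frac{1-\cos(s\log p)}{p^\sigma}=+\infty$, as desired.

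The only genuinely non-elementary ingredient is this divergence of the prime sum, which rests on the location of the pole of $\zeta$ and its non-vanishing on $\Re(s)=1$; alternatively, the same divergence follows directly from the prime number theorem by partial summation, writing $\sum_{p\leq x}p^{-1-is}$ as a Stieltjes integral against $\pi(u)$ and observing that it converges as $x\to\infty$ when $s\neq 0$, whereas $\sum_{p\leq x}p^{-1}\sim\log\log x$. Everything else — the reduction via \cref{rem:basic-properties-of-D}\ref{item:rem:basic-properties-of-D-3} and the removal of the finitely many primes dividing the modulus — is routine bookkeeping, so I expect no serious obstacle beyond correctly invoking this classical analytic input.
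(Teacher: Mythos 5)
Your proof is correct. The paper itself offers no proof of this lemma, only a citation to the literature, and your argument is precisely the standard one behind that citation: reduce to the case $\mathbb{D}(1,n^{is})=\infty$ by raising $\chi$ to the power $\varphi(d)$ (using property (3) of the distance $\mathbb{D}$) and discarding the finitely many primes dividing the modulus, then derive the divergence of $\sum_p p^{-1}\bigl(1-\cos(s\log p)\bigr)$ from the pole of $\zeta$ at $1$ together with $\zeta(1+is)\neq 0$; both this route and your PNT/partial-summation alternative are sound.
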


We end this subsection with a list containing examples of multiplicative functions belonging to $\cm$ and examples of sets in $\Dzero$ that can be obtained from functions in $\cm$. 

\begin{Example}
\label{Ex:multiplicative-functions}
\
\begin{enumerate}
[label=(\arabic*), ref=(\arabic*), leftmargin=*]
\item\label{item:eg:multiplicative-functions-1}
The \define{Liouville function}
$\lio$ is defined as $\lio(n):=(-1)^{\Omega(n)}$ and is completely multiplicative (for the definition of $\Omega(n)$ see \cref{example:mf}). The non-trivial level sets of $\lio$ are exactly the multiplicatively even and odd numbers $\ME$ and $\MO$ defined in \cref{example:mf}.
\item\label{item:eg:multiplicative-functions-1.5}
The \define{M{\"o}bius function} $\mob$ is defined as $\mob(n):=\lio(n)\1_Q(n)$. Note that $\mob$ is multiplicative but not completely multiplicative.
\item\label{item:eg:multiplicative-functions-5}
Throughout this paper we identify the torus $\T:=\R/\Z$ with the unit interval $[0,1)\bmod 1$ or, when convenient, with the unit circle in the complex plane. Also, we introduce the notation $e(x):=e^{2\pi i x}$ for all $x\in\R$.
Given $\xi\in\T$, define the multiplicative functions $\lio_{\xi}$, $\mob_{\xi}$ and $\boldsymbol{\kappa}_\xi$ as
$$
\lio_{\xi}(n):=e( \xi\Omega(n)),\qquad
\mob_{\xi}(n):=\lio_{\xi}(n)\1_{Q}(n),
$$
and
$$
\boldsymbol{\kappa}_\xi(n):=e( \xi\omega(n)),
$$
where $\omega(n)$ denotes the number of distinct prime divisors of $n$ (counted without multiplicities). It is clear that $\boldsymbol{\kappa}_\xi,\lio_{\xi},\mob_{\xi}\in\cm$.
Observe that $\lio_{\frac12}=\lio$ and $\mob_{\frac12}=\mob$.

The following examples of sets belong to $\Dzero$ because they can be viewed as level sets of the functions $\boldsymbol{\kappa}_\xi$, $\lio_{\xi}$, and $\mob_{\xi}$, respectively, where $\xi$ is any primitive $b$-th root of unity:
\begin{eqnarray*}
S_{\omega,b,r}&:=&\{n\in\N: \omega(n)\equiv r\bmod b\},
\\
S_{\Omega,b,r}&:=&\{n\in\N: \Omega(n)\equiv r\bmod b\},
\\
U_{b,r}&:=&\{n\in\N:n~\text{is squarefree and}~\Omega(n)\equiv r\bmod b\}.
\end{eqnarray*}
Note that $\ME=S_{\Omega,2,0}$ and $\MO=S_{\Omega,2,1}$.
\item\label{item:eg:multiplicative-functions-11}
If $f:\N\to\N$ is multiplicative and $b$ is either $2$, $4$, $p$ or $2p$, where $p$ stands for an odd prime number, then for any $r\in\{0,1,\ldots,b-1\}$ with $\gcd(b,r)=1$ the set
$$
V_{f,b,r}:=\{n\in\N: f(n)\equiv r\bmod b\}
$$ 
is an element of $\Dzero$. This is because for any such $b$ the multiplicative group of integers mod $b$ is cyclic and hence there exists a Dirichlet character $\chi$ which spans $(\Z/b\Z)^*$. Therefore $V_{b,r}$ can be realized as a level set of the multiplicative function $\chi\circ f$, which belongs to $\cm$.
In particular, the set
\begin{eqnarray*}
S_{\boldsymbol{\tau},b,r}&:=&\{n\in\N: \boldsymbol{\tau}(n)\equiv r\bmod b\}
\end{eqnarray*}
belongs to $\Dzero$, where $\boldsymbol{\tau}(n):=\sum_{d\mid n} 1$ is the \define{number of divisors function}.
\end{enumerate}
\end{Example}

\subsection{Mean value theorems of Wirsing and Hal{\'a}sz}\label{sec:prelim-mean-value-thms}

We say a function $f\in\cm$ has a \define{mean value}, and denote it by $M(f)$, if the limit
\begin{equation}
\label{eqn:mean-value}
M(f):=\lim_{N\to\infty}\frac1N\sum_{n=1}^N f(n)
\end{equation}
exists.
In general, the mean value of a multiplicative function $f\in\cm$ does not exist; for example, if $t\neq 0$ then the mean of $n^{it}$ does not exist, cf.\ \cite[Section 4.3]{GSdraft}.

Two classical theorems in multiplicative number theory are Wirsing's celebrated mean value theorem regarding real-valued multiplicative functions bounded in modulus by $1$, and Hal{\'a}sz's generalization of Wirsing's theorem to all functions in $\cm$.

\begin{Theorem}[Wirsing; see \cite{Wirsing61} and {\cite[Theorem 6.4]{Elliott79}}]
\label{thm:wirsing}
For any real-valued $g\in\cm$ the mean value $M(g)$ exists.
\end{Theorem}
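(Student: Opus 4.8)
The plan is to prove that for any real-valued $g \in \cm$ the mean value $M(g) = \lim_{N\to\infty} \frac1N \sum_{n\le N} g(n)$ exists, treating separately the case where the ``obvious obstruction'' appears. The first step is to reduce to understanding the behavior of the partial sums of $g$ over primes, via the Turán--Kubilius / Daboussi-type machinery: the mean value of a multiplicative function is governed by the series $\sum_p \frac{1-g(p)}{p}$ (and, for complex functions, by $\mathbb{D}(g, n^{it})$, but here $g$ is real so only $t=0$ is relevant, and by \cref{DirArch} a real function cannot pretend to be $n^{it}$ with $t\neq 0$). So I would first dispose of the case $\sum_p \frac{1-g(p)}{p} = \infty$: here one expects $M(g)=0$, and this is the heart of the matter. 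When the series converges, $g$ pretends to be the constant function $1$, and a comparatively soft argument (an Euler-product / Wintner-type computation, or invoking Hal\'asz's theorem directly) shows the mean value exists and can be computed as an absolutely convergent Euler product $\prod_p \big(1-\tfrac1p\big)\big(1 + \tfrac{g(p)}{p} + \tfrac{g(p^2)}{p^2} + \cdots\big)$.

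For the main case $\sum_p \frac{1-g(p)}{p} = \infty$, I would follow Wirsing's original approach (or the streamlined treatment in Elliott's book, \cite[Theorem 6.4]{Elliott79}). The key idea is a clever comparison/convolution trick: write $g = h * \mathbf{1}$ or more precisely relate the summatory function of $g$ to that of an auxiliary nonnegative multiplicative function, and use the elementary inequality, valid for real $g$ with $|g|\le 1$, that controls $\sum_{n\le N} g(n)$ in terms of $\sum_{n\le N} |g(n)| \cdot (\text{something involving } \sum_{p\le N}\frac{1-g(p)}{p})$. One runs a Rankin-type / partial summation argument exploiting the fact that, because $g$ is real and bounded by $1$, the ``bad'' primes (those with $g(p)$ far from $1$) force cancellation: heuristically $\frac1N\sum_{n\le N} g(n) \ll \exp\big(-c\sum_{p\le N}\frac{1-g(p)}{p}\big) \to 0$. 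Making this rigorous requires the Turán--Kubilius inequality to control the contribution of $n$ with atypically many prime factors from the bad set, plus an induction on $N$ (a difference-inequality for the partial sums) to bootstrap a weak bound into the stated $o(1)$ decay.

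The step I expect to be the main obstacle is precisely this last one: establishing the decay estimate $\frac1N\sum_{n\le N}g(n) \to 0$ when $\sum_p\frac{1-g(p)}{p}$ diverges. The difficulty is that one cannot simply bound $|g|$ trivially — one must genuinely extract sign cancellation coming from the primes where $g(p) < 0$ (or merely $g(p)$ bounded away from $1$), and the real-valuedness of $g$ is essential here (the complex case is genuinely different and is exactly where Hal\'asz's theorem and the $n^{it}$ obstruction enter). I would handle it either by quoting Wirsing's theorem as a black box in the form cited — which is legitimate since the excerpt explicitly invites citing \cite{Wirsing61} and \cite[Theorem 6.4]{Elliott79} — or, if a self-contained argument is wanted, by reproducing the convolution-and-induction scheme of Elliott, with the Turán--Kubilius inequality as the one external analytic input. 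Either way, the convergent case is routine and the divergent case is where all the work sits.
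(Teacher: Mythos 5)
The paper offers no proof of this theorem at all: it is imported verbatim from Wirsing \cite{Wirsing61} and Elliott \cite[Theorem 6.4]{Elliott79}, so your option of quoting it as a black box is precisely the paper's treatment. Your sketch of the underlying argument (the Euler-product computation when $\sum_p\frac{1-g(p)}{p}$ converges, and the Tur\'an--Kubilius plus convolution/iteration scheme forcing $M(g)=0$ when it diverges) correctly reflects the standard proof in the cited sources, so there is nothing to fault beyond the details you explicitly defer to those references.
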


\begin{Theorem}[Hal{\'a}sz; see {\cite[Theorem 6.3]{Elliott79}}]
\label{thm:FH-thm.2.9}
Let $g\in\cm$. Then the mean value $M(g)$
exists if and only if one of the following mutually exclusive conditions is satisfied:
\begin{enumerate}
[label=\text{(\roman{enumi})}, ref=\text{(\roman{enumi})}, leftmargin=*]
\item\label{item:E79-thm6.3-i}
there is at least one positive integer $k$ so that $g(2^k)\neq-1$ and, additionally, the series $\sum_{p\in\P}\frac1p(1-g(p))$ converges;
\item
\label{item:E79-thm6.3-iii}
there is a real number $t$ such that $\mathbb{D}(g,n^{it})<\infty$ and, moreover, for each positive integer $k$ we have $g(2^k)=-2^{itk}$;
\item
\label{item:E79-thm6.3-iv}
$\mathbb{D}(g,n^{it})=\infty$ for each $t\in\R$.
\end{enumerate}
When condition \ref{item:E79-thm6.3-i} is satisfied then $M(g)$ is non-zero and can be computed explicitly using the formula
\begin{equation}
\label{eq:mean-value-0}
M(g)=\prod_{p\in\P}\left(1-\frac1p\right)\left(1+\sum_{m=1}^\infty p^{-m}g(p^m)\right).
\end{equation}
In the case when $g$ satisfies either \ref{item:E79-thm6.3-iii} or \ref{item:E79-thm6.3-iv} then the mean value $M(g)$ equals zero.
\end{Theorem}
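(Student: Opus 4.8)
The plan is to deduce the classification from three classical inputs: \emph{Hal\'asz's inequality}, which for $g\in\cm$ and parameters $x,T\ge 2$ asserts
\[
\frac1x\Bigl|\sum_{n\le x}g(n)\Bigr|\ll(1+\mathcal M)e^{-\mathcal M}+\frac1T+\frac{\log\log x}{\log x},\qquad \mathcal M=\mathcal M_g(x,T):=\min_{|t|\le T}\sum_{p\le x}\frac{1-\Re\bigl(g(p)p^{-it}\bigr)}{p};
\]
Delange's theorem, that if $\sum_p\tfrac{1-g(p)}{p}$ converges then $M(g)$ exists and equals $\prod_p(1-\tfrac1p)(1+\sum_{m\ge1}g(p^m)p^{-m})$; and Wirsing's \cref{thm:wirsing}. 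First I would pin down the role of the distance $\mathbb D$. Fix $x$ and write $\mathbb D(g,n^{it};x)^2$ for the truncated sum above, as a function of $t$. The key point is that near-minimizers cluster: if $t_1,t_2$ are both near-minimizers, then $\mathbb D(n^{it_1},n^{it_2};x)^2$ is bounded by the triangle inequality, while $\mathbb D(1,n^{i\tau};x)^2=\sum_{p\le x}\tfrac{1-\cos(\tau\log p)}{p}$ grows without bound (like $\log(|\tau|\log x)$) as $|\tau|\log x\to\infty$; hence the minimizers lie in an interval of length $\Oh_C(1/\log x)$. Combining this with the elementary Lipschitz bound $|\mathbb D(g,n^{it};x)^2-\mathbb D(g,n^{it'};x)^2|\ll|t-t'|\log x$ and monotonicity of $\mathbb D(g,n^{it};x)^2$ in $x$, one shows: if $\mathbb D(g,n^{it})=\infty$ for \emph{every} $t\in\R$, then $\mathcal M_g(x,T)\to\infty$ for each fixed $T$, so Hal\'asz's inequality gives $M(g)=0$ --- this is condition~\ref{item:E79-thm6.3-iv}. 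The same clustering argument together with \cref{DirArch} applied to the principal character (so that $\mathbb D(1,n^{it})=\infty$ for $t\ne0$) shows that there is at most one $t_0\in\R$ with $\mathbb D(g,n^{it_0})<\infty$.

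Next I would treat the remaining case $\mathbb D(g,n^{it_0})<\infty$, reducing it to $t_0=0$ by Abel summation. Write $g(n)=h(n)n^{it_0}$ with $h\in\cm$, $\mathbb D(h,1)<\infty$, and $H(y):=\sum_{n\le y}h(n)$; then
\[
\frac1x\sum_{n\le x}g(n)=\frac{H(x)}{x}\,x^{it_0}-\frac{it_0}{x}\int_1^x\frac{H(u)}{u}\,u^{it_0}\,du,
\]
and the same identity read backwards (applied to $g(n)n^{-it_0}$) recovers $H(x)/x$ from the partial sums of $g$. Since $x^{it_0}$ equidistributes on the unit circle when $t_0\ne0$, it follows that $M(g)$ exists if and only if $M(h)$ exists and equals $0$, in which case $M(g)=0$. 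So everything reduces to analyzing $h\in\cm$ with $\mathbb D(h,1)<\infty$, and the claim to establish is that $M(h)$ exists iff $\sum_p\tfrac{1-h(p)}{p}$ converges or $h(2^k)=-1$ for all $k$, with $M(h)\ne0$ exactly when $\sum_p\tfrac{1-h(p)}{p}$ converges and $h(2^k)\ne-1$ for some $k$. If $\sum_p\tfrac{1-h(p)}{p}$ converges, this is Delange's theorem together with the elementary bound $\bigl|1+\sum_{m\ge1}h(p^m)p^{-m}\bigr|\ge1-\tfrac1{p-1}>0$ for $p\ge3$ and the observation that the local factor at $p=2$ vanishes precisely when $h(2^k)=-1$ for all $k$; translating back, this yields condition~\ref{item:E79-thm6.3-i} and formula~\eqref{eq:mean-value-0}. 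If $\sum_p\tfrac{1-h(p)}{p}$ diverges but $h(2^k)\ne-1$ for some $k$, then $g$ satisfies none of \ref{item:E79-thm6.3-i}--\ref{item:E79-thm6.3-iv}, and one checks via the sharp form of Hal\'asz's theorem that $H(x)/x$ does not converge: its modulus stays bounded away from $0$ (the governing product over $p\le x$ has modulus $\asymp1$ because $\mathbb D(h,1)<\infty$), while its argument does not converge, since it tracks the partial sums of $\sum_p\tfrac{\Im h(p)}{p}$, which diverge (their divergence being exactly the failure of convergence of $\sum_p\tfrac{1-h(p)}{p}$). Finally, if $h(2^k)=-1$ for all $k$ --- equivalently $g(2^k)=-2^{it_0k}$ for all $k$, which is condition~\ref{item:E79-thm6.3-iii} --- one must show $M(h)=0$: pulling out the $2$-part of $h$, the local Euler factor at $2$ in $\sum_nh(n)n^{-s}$ equals $\tfrac{1-2^{1-s}}{1-2^{-s}}$, whose zero at $s=1$ cancels the pole of $\zeta(s)$, and a Tauberian argument --- the same mechanism that underlies \cref{thm:wirsing} --- converts this into $\sum_{n\le x}h(n)=\oh(x)$, hence $M(g)=0$.

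The main obstacle is this last step: showing that condition~\ref{item:E79-thm6.3-iii} forces $M(g)=0$ even when $\sum_p\tfrac{1-g(p)}{p}$ fails to converge, so that Delange's theorem is unavailable and the Euler product in~\eqref{eq:mean-value-0} need not even make sense. Here the cancellation originates \emph{entirely} from the prime $2$ and has to be extracted by a Wirsing-type argument; this is the same phenomenon that makes \cref{thm:wirsing} true with no hypothesis beyond real-valuedness. A secondary technical nuisance is the clustering-and-continuity argument that upgrades ``$\mathbb D(g,n^{it})=\infty$ for each fixed $t$'' to ``$\mathcal M_g(x,T)\to\infty$'', which hinges on the non-obvious fact that the minimizers of $\mathbb D(g,n^{it};x)^2$ cannot escape to infinity as $x\to\infty$.
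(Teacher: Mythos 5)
First and most importantly: the paper does not prove this theorem. It is Halász's classical mean value theorem, stated with a citation to Elliott's book (Theorem 6.3 there) and used throughout the paper as a black box, so there is no internal proof to compare your sketch against.

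On its own terms, your outline is the standard modern derivation of the qualitative classification from the quantitative Halász machinery together with Delange's and Wirsing's theorems, and the case structure is the right one; you also correctly isolate the two genuinely delicate points (the compactness/clustering argument showing the near-minimizing $t$ cannot drift as $x\to\infty$, for which the monotonicity of the truncated distance in $x$ is indeed the decisive ingredient, and the case \ref{item:E79-thm6.3-iii} cancellation coming entirely from the prime $2$). Two caveats. The inputs you assume --- Halász's inequality and especially its ``sharp form'', i.e.\ the asymptotic of $H(x)/x$ as a slowly oscillating Euler product --- are at least as deep as the statement itself; what you have is a deduction of the qualitative theorem from the quantitative one, which is legitimate but should be labelled as such rather than presented as self-contained. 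More substantively, the asserted equivalence ``$M(g)$ exists iff $M(h)$ exists and equals $0$'' (for $t_0\neq 0$) does not follow from the two Abel-summation identities plus equidistribution of $x^{it_0}$: reading the identity backwards from $M(g)=c$ gives only $H(x)/x=\tfrac{c}{1-it_0}x^{-it_0}+\oh(1)$, and for $c\neq 0$ this is perfectly consistent with $M(h)$ failing to exist, so the ``only if'' direction is open precisely in the case one most needs to exclude. To close it you must separately rule out a nonzero mean when $t_0\neq 0$; the ingredient is the one you invoke only later, namely that $\mathbb{D}(h,1)<\infty$ forces $\sum_{p\leq x}|\Im h(p)|/p\ll\sqrt{\log\log x}=\oh(\log x)$ by Cauchy--Schwarz, so the argument of $H(x)/x$ cannot spiral at the linear rate $t_0\log x$ that a nonzero $M(g)$ would require. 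With that repaired, and with the case \ref{item:E79-thm6.3-iii} Tauberian step actually carried out rather than gestured at (it is the core of Halász's and Wirsing's work, not a routine verification), the sketch is sound.
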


\subsection{Besicovitch almost periodic functions}
\label{subsec:AP}

The Besicovitch seminorm $\|\cdot\|_B$ and Besicovitch almost periodic and Besicovitch rationally almost periodic functions were introduced in \cref{sec:intro} (see equation \eqref{eq:def-besic-seminorm} and \cref{def:d3}).

Any periodic function is clearly Besicovitch rationally almost periodic. In particular, any Dirichlet character $\chi$ is Besicovitch rationally almost periodic. There are, however, many other natural examples of multiplicative functions that are Besicovitch rationally almost periodic. For instance, $\mob^2$ and $\tfrac{\tot(n)}{n}$ are such. More generally, it will be shown at the end of this subsection (see \cref{ren:0-1-valued-RAP} below) that any bounded multiplicative function with values in $[0,\infty)$ is Besicovitch rationally almost periodic.

For any Besicovitch almost periodic function $f\colon \N\to\C$ and any $\theta\in[0,1)$ the limit
$$
\hat f(\theta):=\lim_{N\to\infty}\frac{1}{N}\sum_{n=1}^Nf(n)e(-n\theta)
$$
exists; moreover, $\hat f(\theta)$ differs from $0$ for at most countably many values of $\theta$ (cf.\ \cite[pp. 104 -- 105]{Besicovitch55}). The set $\sigma(f):=\{\theta\in[0,1):\hat{f}(\theta)\neq 0\}$ is called the \define{spectrum} of $f$.
See \cite{BL85,Besicovitch55} for more information on the Fourier analysis of almost periodic functions.

We say that a Besicovitch almost periodic function $f\colon \N\to\C$ has \define{rational spectrum} if $\sigma(f)$ is a subset of $\Q\cap[0,1)$.
Note that if $f$ is periodic then its spectrum is rational. Note also that each Besicovitch almost periodic function $f$ has a mean given by $\hat{f}(0)$. It easily follows that there are $f\in\cm$ that are not Besicovitch almost periodic (indeed, take any $f\in\cm$ which has no mean). However, it follows from the next theorem, which is due to Daboussi, that whenever $f\in\cm$ is Besicovitch almost periodic then its spectrum has to be rational (this fact is used later, cf.\ \cref{cor:DD82-thm1-and-thm6} part \ref{item:DD82-thm1-and-thm6-i} and \ref{item:DD82-thm1-and-thm6-ii}).

\begin{Theorem}[cf.\ {\cite[Theorem 1]{DD82}}]
\label{thm:DD82-thm.1.}
Let $f\in\cm$. Then for all irrational $\theta$,
$$
\lim_{N\to\infty}\frac{1}{N}\sum_{n=1}^N f(n)e(\theta n)=0.
$$
\end{Theorem}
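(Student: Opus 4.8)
The plan is to exploit the multiplicative structure of $f$ via a Turán–Kubilius / Daboussi-type orthogonality argument, showing that $f$ cannot correlate with the additive character $n\mapsto e(\theta n)$ when $\theta$ is irrational. The key heuristic is that a multiplicative function, being ``built from primes'', behaves nearly independently on multiplicatively independent inputs, whereas $e(\theta n)$ is a genuinely additive object; for irrational $\theta$ the sequence $(\theta p)_{p\in\P}$ equidistributes mod $1$ by Vinogradov, and this equidistribution is precisely what kills the correlation. Concretely, I would first reduce to the case $|f(n)|\le 1$ with $f$ completely multiplicative on prime powers being irrelevant — only the values $f(p)$ at primes matter after the usual device of removing the contribution of $n$ divisible by a high prime power, which has density $O(1/p)$ summably small.

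The main step is the following averaging-over-primes trick. For a parameter $y$ and a prime $p\le y$, write each $n\le N$ with $p\mid n$ as $n=pm$; since $f$ is multiplicative and for ``most'' such $n$ we have $\gcd(p,m)=1$, one gets $f(n)\approx f(p)f(m)$. Summing $\frac1N\sum_{n\le N}f(n)e(\theta n)$ after inserting the identity $1\approx \frac{1}{\pi(y)}\sum_{p\le y}\big(\text{number of }p\mid n\big)$ (Turán–Kubilius: $\sum_{p\le y}\indicator_{p\mid n}$ concentrates around $\log\log y$, but a suitably normalized version is usable), one arrives after rearrangement at a bound of the shape
$$
\Big|\frac1N\sum_{n\le N}f(n)e(\theta n)\Big|^2
\;\ll\; \frac{1}{\pi(y)}\sum_{p\le y}\Big|\frac1N\sum_{m\le N/p} f(m)e(\theta p m)\Big| \;+\; \frac{\log\log y}{(\log\log y)}\text{-type error},
$$
and then by Cauchy–Schwarz and opening the square in $m$, the inner average over $p$ produces the exponential sum $\sum_{p\le y}e(\theta p(m_1-m_2))$, which by Vinogradov's estimate is $o(\pi(y))$ uniformly for $m_1\ne m_2$ precisely because $\theta$ is irrational. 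Letting $N\to\infty$ first and then $y\to\infty$ yields that the original average tends to $0$. (This is the structure of Daboussi's original proof; the details of the error-term bookkeeping — controlling the diagonal $m_1=m_2$, the non-coprime pairs $p\mid m$, and the prime-power contributions — are routine but must be done carefully.)

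The hardest part, I expect, is making the Turán–Kubilius weighting rigorous with effective error terms: one needs that the correlation $\frac1N\sum_{n\le N}f(n)e(\theta n)$ is, up to $o(1)$, equal to an average of its ``dilates'' $\frac1N\sum_{m}f(m)e(\theta p m)$ over primes $p\le y$, with an error that genuinely vanishes as $y\to\infty$ rather than merely staying bounded. This requires the $L^2$ (second-moment) version of Turán–Kubilius rather than the crude first-moment heuristic sketched above, and the bilinear rearrangement must be arranged so that Vinogradov's bound $\sum_{p\le y}e(\alpha p)=o_\theta(\pi(y))$ for fixed irrational $\alpha$ (applied with $\alpha=\theta(m_1-m_2)$) is invoked only for a single fixed irrational shift, then summed. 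Alternatively — and this may be the cleanest route to cite rather than reprove — one can simply invoke Daboussi's theorem \cite{DD74,DD82} directly, since the statement as given is literally \cite[Theorem 1]{DD82}; but if a self-contained argument is wanted, the prime-averaging scheme above, combined with Vinogradov's exponential sum estimate over primes, is the way I would proceed.
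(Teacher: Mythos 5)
The paper does not prove this statement at all: it is imported verbatim from Daboussi--Delange and used as a black box, so your closing remark---that one may simply cite \cite[Theorem 1]{DD82}---is exactly what the authors do, and on its own would be the ``same approach.''

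Your self-contained sketch, however, has a genuine gap at the key analytic step. After Cauchy--Schwarz you open the square in the $m$-variable and are left with the inner sum $\sum_{p\le y}e(\theta p(m_1-m_2))$, which you claim is $o(\pi(y))$ ``uniformly for $m_1\ne m_2$'' by Vinogradov. That is false: Vinogradov's estimate is not uniform in the irrational shift. By Dirichlet there are infinitely many integers $h$ with $\|\theta h\|<1/h$ (here $\|\cdot\|$ is the distance to the nearest integer), and for such $h=m_1-m_2$ with $h$ large compared to $y$ one has $e(\theta p h)\approx 1$ for all $p\le y$, so the sum is $\sim\pi(y)$ rather than $o(\pi(y))$. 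Since in your setup $N\to\infty$ first with $y$ fixed, the range of $h$ grows without bound and these bad pairs cannot be dismissed without a further major/minor-arc or second-moment argument that you do not supply. More to the point, this is not how Daboussi's proof goes, and the theorem needs no information about $\sum_{p}e(\alpha p)$ whatsoever. The correct arrangement applies Cauchy--Schwarz with $m$ as the outer variable and $p$ as the inner one; expanding the square yields $\sum_{p,q\in P}f(p)\overline{f(q)}\sum_{m\le M}e(\theta(p-q)m)$, and for $p\ne q$ the number $\theta(p-q)$ is irrational, so the inner sum is a geometric series bounded by $\|\theta(p-q)\|^{-1}$ uniformly in $M$. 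Summing over the finitely many off-diagonal pairs, letting $N\to\infty$ first, and then enlarging $P$ so that $\sum_{p\in P}1/p\to\infty$ (which makes the Tur\'an--Kubilius error term vanish) finishes the proof. Thus irrationality of $\theta$ enters only through $\theta(p-q)\notin\Z$ for distinct primes $p,q$, not through equidistribution of $(\theta p)_{p\in\P}$; your opening heuristic attributes the theorem to the wrong phenomenon, and the celebrated feature of Daboussi's argument is precisely that it avoids Vinogradov.
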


In \cref{cor:RAP-have-rational-spectrum} below we show that a Besicovitch almost periodic function is Besicovitch rationally almost periodic if and only if it has rational spectrum. We will derive this as a corollary from the following theorem.

\begin{Theorem}[cf.\ {\cite[Theorem II.8.2\ensuremath{^\circ}(page 105)]{Besicovitch55}} and {\cite[Lemma 3.11]{BL85}}]\label{thm:besicovitch-approximation-theorem}
Let $f\colon \N\to\C$ be a Besicovitch almost periodic function with spectrum $\sigma(f)$.
Then for every $\epsilon>0$ there exists a trigonometric polynomial $P(n)=\sum_{i=1}^k c_ie(\theta_i n)$ with
$c_1,\ldots,c_k\in\C$ and $\theta_1,\ldots,\theta_k\in\sigma(f)$ such that $\|f-P\|_B\leq \epsilon$.
This includes the case $\sigma(f)=\emptyset$, where one can take $P\equiv 0$ for all $\epsilon>0$ (i.e., $f$ has empty spectrum if and only if $\|f\|_B=0$).
\end{Theorem}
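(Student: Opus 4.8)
The plan is to establish the approximation by first reducing to the case of a \emph{finite} spectrum and then handling the tail of the spectrum via a standard $L^2$-type estimate in the Besicovitch seminorm. Throughout, the key analytic input is that $\|\cdot\|_B$ behaves on almost periodic functions like an $L^2$-norm: for a Besicovitch almost periodic $f$ one has a Parseval-type identity, namely $\limsup_{N\to\infty}\frac1N\sum_{n=1}^N|f(n)|^2 = \sum_{\theta\in\sigma(f)}|\hat f(\theta)|^2$ (this is part of the classical Besicovitch theory, \cite{Besicovitch55}, and can be quoted), and more importantly that for a trigonometric polynomial $Q(n)=\sum_j b_j e(\theta_j n)$ with distinct frequencies $\theta_j$ one has $\|Q\|_B^2 \geq \|Q\|_{B,2}^2 := \lim_N \frac1N\sum_{n\le N}|Q(n)|^2 = \sum_j|b_j|^2$ — wait, here one actually wants the reverse direction, so I would instead use the elementary fact that $\|g\|_B \le \|g\|_{B,2}$ by Cauchy--Schwarz, where $\|g\|_{B,2}^2 := \limsup_N \frac1N\sum_{n\le N}|g(n)|^2$. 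This reduces everything to controlling the $L^2$-Besicovitch norm of the error.

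First I would enumerate the spectrum as $\sigma(f)=\{\theta_1,\theta_2,\ldots\}$ (countable by hypothesis), with Fourier coefficients $c_i=\hat f(\theta_i)$. By the Parseval inequality just mentioned, $\sum_i |c_i|^2 \le \|f\|_{B,2}^2 <\infty$, so given $\epsilon>0$ I can pick $k$ so that $\sum_{i>k}|c_i|^2 < \epsilon^2$. Set $P(n):=\sum_{i=1}^k c_i e(\theta_i n)$; this is the candidate trigonometric polynomial, with frequencies in $\sigma(f)$ as required. It remains to show $\|f-P\|_B \le \|f-P\|_{B,2} \le \epsilon$ (up to a harmless constant one can absorb by shrinking $\epsilon$), i.e.\ that the $L^2$-Besicovitch norm of the error $g:=f-P$ is small.

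The core step is the computation $\|g\|_{B,2}^2 = \|f\|_{B,2}^2 - \sum_{i=1}^k|c_i|^2$. This follows by expanding $\frac1N\sum_{n\le N}|f(n)-P(n)|^2$, using that $\lim_N \frac1N\sum_{n\le N} f(n)\overline{e(\theta_i n)} = \hat f(\theta_i) = c_i$ for each of the finitely many $i\le k$ (existence of these limits is exactly the definition of $\hat f$, valid since $f$ is Besicovitch a.p.), and that $\lim_N \frac1N \sum_{n\le N} e((\theta_i-\theta_j)n) = \delta_{ij}$ for $i,j\le k$. Combining with $\|f\|_{B,2}^2 = \sum_i|c_i|^2$ (full Parseval, quoted from \cite{Besicovitch55}) gives $\|g\|_{B,2}^2 = \sum_{i>k}|c_i|^2 < \epsilon^2$, hence $\|f-P\|_B \le \|f-P\|_{B,2} < \epsilon$. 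The degenerate case $\sigma(f)=\emptyset$ is immediate: then $\|f\|_{B,2}=0$ by Parseval, so $\|f\|_B=0$ and $P\equiv 0$ works; conversely $\|f\|_B=0$ forces all $\hat f(\theta)=0$, so $\sigma(f)=\emptyset$.

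I expect the main technical obstacle to be handling the interplay between $\|\cdot\|_B$ (an $L^1$-style $\limsup$) and the $L^2$-style quantity $\|\cdot\|_{B,2}$ cleanly: the $\limsup$ in the definition of $\|\cdot\|_B$ does not split additively, so the passage from $\|f\|_{B,2}^2 = \sum|c_i|^2$ to the error estimate has to be done along a single sequence of $N$'s, and one must be slightly careful that the finitely many limits $\hat f(\theta_i)$ genuinely \emph{exist} (true, by definition of Besicovitch a.p.) rather than merely having a $\limsup$. A secondary point requiring care is that the statement of the theorem is with $\|f-P\|_B\le\epsilon$ rather than $<\epsilon$, so no sharpness issues arise, and any absolute constants lost in Cauchy--Schwarz can be removed by re-choosing $k$ for $\epsilon$ in place of $\epsilon$. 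Finally, since this is classical, an alternative is simply to quote \cite[Theorem II.8.2]{Besicovitch55} and \cite[Lemma 3.11]{BL85} verbatim and note that our formulation is a restatement; but the self-contained argument above is short enough to include.
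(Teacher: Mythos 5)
Your argument is correct in outline but takes a genuinely different route from the paper. The paper's proof (supplied by Losert) invokes the discrete Bochner--Fej\'er approximation of \cite[Lemma 3.11]{BL85}: one takes the polynomials $\sigma_{B_m}^f(n)=\lim_N\frac1N\sum_{k=1}^{N}f(n+k)K_{B_m}(k)$ with $\|f-\sigma_{B_m}^f\|_B\to0$, and the frequency localization is then automatic from the multiplier identity $\widehat{\sigma_{B_m}^f}(\theta)=\hat f(\theta)\,\widehat{K_{B_m}}(\theta)$, which forces $\sigma(\sigma_{B_m}^f)\subset\sigma(f)$. You instead build $P$ as a partial sum of the Fourier series and control the error by orthogonality in the quadratic Besicovitch seminorm, returning to $\|\cdot\|_B$ via Cauchy--Schwarz; granting Parseval, this is clean and complete (and the Bessel inequality you need in order to choose $k$ already follows from your own orthogonality expansion, so it requires no citation). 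The one point you must be explicit about is that the Parseval identity $\lim_N\frac1N\sum_{n=1}^{N}|f(n)|^2=\sum_{\theta\in\sigma(f)}|\hat f(\theta)|^2$ is, in the classical theory, a statement about $B^2$-almost periodic functions, whereas the hypothesis here is approximability in the $L^1$-type seminorm $\|\cdot\|_B$. For bounded $f$ the two classes do coincide, but the standard proof of that equivalence (and hence of Parseval in the form you need) runs precisely through the Bochner--Fej\'er polynomials, using that $\|\sigma_B^f\|_\infty\leq\|f\|_\infty$ because $K_B\geq 0$ with mean $1$; an arbitrary sequence of trigonometric polynomials approximating $f$ in $\|\cdot\|_B$ need not be uniformly bounded, so one cannot upgrade $B^1$-approximation to $B^2$-approximation by a direct H\"older argument. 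In short, your derivation is valid, but the black box it rests on is of essentially the same depth as the one the paper quotes rather than a shortcut around it; what your route buys is an explicit, canonical choice of $P$ (the truncated Fourier series) and a quantitative error $\bigl(\sum_{i>k}|c_i|^2\bigr)^{1/2}$, while the paper's route gets the spectrum containment for free without ever invoking Parseval.
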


\begin{proof}The following proof was kindly provided to the authors by Viktor Losert.

Given $n_0,n_1,\ldots,n_s\in\N$ and $\beta_1,\ldots,\beta_s\in\R$ such that the set $\{2\pi,\beta_1,\beta_2,\ldots,\beta_s\}$ is linearly independent over $\Q$, the \define{discrete Bochner-Fej{\'e}r kernel} with parameter $B=\left(\begin{smallmatrix}n_0 & n_1 & \ldots & n_s \\ 2\pi & \beta_1 & \ldots & \beta_s\end{smallmatrix}\right)$ is defined as
$$
K_B(k):=\sum \left(1-\frac{|\nu_1|}{n_1}\right)\cdots \left(1-\frac{|\nu_s|}{n_s}\right)e^{-i\left(\frac{\nu_0}{n_0}2\pi+\nu_1\beta_1+\ldots+\nu_s\beta_s\right)k},
$$
where the sum ranges over $\nu_0=1,\ldots,n_0$, $|\nu_1|<n_1, \ldots, |\nu_s|<n_s$. The corresponding \define{discrete Bochner-Fej{\'e}r polynomial} is
$$
\sigma_{B}^f(n):=\lim_{N\to\infty}\frac{1}{N}\sum_{k=1}^{N} f(n+k)K_B(k).
$$

It is shown in {\cite[Lemma 3.11]{BL85}} (also cf.\ {\cite[Theorem II.8.2\ensuremath{^\circ}(page 105)]{Besicovitch55}}) that there exists a sequence of Bochner-Fej{\'e}r polynomials $\sigma_{B_m}^f$, $m\in\N$, such that $\|f- \sigma_{B_m}^f\|_B\to 0$ as $m\to\infty$.
It is not hard to see that
$$
\widehat{\sigma_{B_m}^f}(\theta)~=~\hat{f}(\theta)\cdot \widehat{K_{B_m}}(1-\theta)~=~\hat{f}(\theta)\cdot \widehat{K_{B_m}}(\theta)
$$
and hence $\sigma(\sigma_{B_m}^f)\subset\sigma(f)$. This finishes the proof.
%
\end{proof}

From \cref{thm:besicovitch-approximation-theorem} we obtain the following corollary.

\begin{Corollary}\label{cor:RAP-have-rational-spectrum}
Let $f\colon \N\to\C$ be Besicovitch almost periodic. Then $f$ is Besicovitch rationally almost periodic if and only if $f$ has rational spectrum.
\end{Corollary}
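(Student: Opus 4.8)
The plan is to prove both implications, with the easier direction being "rationally almost periodic $\Rightarrow$ rational spectrum" and the substantive direction being its converse, which we extract from \cref{thm:besicovitch-approximation-theorem}.

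For the forward implication, suppose $f$ is Besicovitch rationally almost periodic. Given $\theta\notin\Q$, I would show $\hat f(\theta)=0$. Fix $\epsilon>0$ and pick a periodic function $P$ (equivalently a trigonometric polynomial with rational frequencies) with $\|f-P\|_B<\epsilon$. The Fourier coefficient functional $g\mapsto \hat g(\theta)$ is bounded in absolute value by $\|g\|_B$ (since $|\hat g(\theta)|\le\limsup_N\frac1N\sum_{n\le N}|g(n)e(-n\theta)|=\|g\|_B$), so $|\hat f(\theta)-\hat P(\theta)|\le\|f-P\|_B<\epsilon$. But $P$ has rational spectrum, so $\hat P(\theta)=0$ for the irrational $\theta$, giving $|\hat f(\theta)|<\epsilon$. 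Letting $\epsilon\to 0$ yields $\hat f(\theta)=0$; hence $\sigma(f)\subset\Q\cap[0,1)$, i.e.\ $f$ has rational spectrum. (The same bound also handles, if needed, rational $\theta$ not in the spectrum of any approximating periodic function, but for the definition of rational spectrum only the irrational $\theta$ matter.)

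For the reverse implication, suppose $f$ is Besicovitch almost periodic with $\sigma(f)\subset\Q\cap[0,1)$. I would invoke \cref{thm:besicovitch-approximation-theorem} directly: for every $\epsilon>0$ there is a trigonometric polynomial $P(n)=\sum_{i=1}^k c_i e(\theta_i n)$ with all $\theta_i\in\sigma(f)$ and $\|f-P\|_B\le\epsilon$. Since $\sigma(f)$ consists of rationals, every $\theta_i$ is rational, so $P$ is exactly a trigonometric polynomial with rational frequencies — which is precisely what \cref{def:d3} requires for Besicovitch rational almost periodicity (equivalently, such a $P$ is periodic, with period a common denominator of the $\theta_i$). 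As $\epsilon>0$ was arbitrary, $f$ is Besicovitch rationally almost periodic. The degenerate case $\sigma(f)=\emptyset$ is covered by the corresponding clause of \cref{thm:besicovitch-approximation-theorem}, with $P\equiv 0$.

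The only real content here is \cref{thm:besicovitch-approximation-theorem} itself — the fact that one can approximate in $\|\cdot\|_B$ using a trigonometric polynomial whose frequencies lie in the spectrum (rather than some larger frequency set). Once that spectral localization is granted, the corollary is essentially a bookkeeping argument: rationality of the spectrum transfers to rationality of the approximating polynomial's frequencies, and conversely the $\|\cdot\|_B$-continuity of Fourier coefficients forces the spectrum of a $\|\cdot\|_B$-limit of rational-frequency trigonometric polynomials to be rational. I do not anticipate any genuine obstacle beyond citing \cref{thm:besicovitch-approximation-theorem} correctly.
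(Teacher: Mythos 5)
Your proposal is correct and follows essentially the same route as the paper's proof: the reverse direction is a direct application of \cref{thm:besicovitch-approximation-theorem} (rational spectrum forces the approximating trigonometric polynomials to have rational frequencies, hence to be periodic), and the forward direction is the same $\epsilon$-estimate showing $\hat f(\theta)=0$ for irrational $\theta$ via a periodic approximant. No gaps.
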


\begin{proof}
First assume $f$ has rational spectrum. By \cref{thm:besicovitch-approximation-theorem}, $f$ can be approximated in the seminorm $\|\cdot\|_B$ by trigonometric polynomials of the from $P(n)=\sum_{i=1}^k c_ie(\theta_i n)$ with $c_1,\ldots,c_k\in\C$ and $\theta_1,\ldots,\theta_k\in\sigma(f)\subset\Q$. Since $\theta_1,\ldots,\theta_k$ are rational numbers, the functions $P(n)$ is periodic. In other words, $f$ satisfies the definition of Besicovitch rationally almost periodic functions.

Next, let $f$ be Besicovitch rationally almost periodic and let $\theta$ be an irrational number. We will show that $\hat{f}(\theta)=0$. Let $\epsilon>0$ be arbitrary and let $P\colon \N\to\C$ be a periodic function with $\|f-P\|_B\leq \epsilon$. Then
\begin{eqnarray*}
\big|\hat{f}(\theta)\big|
&=& \lim_{N\to\infty}\left|\frac{1}{N}\sum_{n=1}^Nf(n)e(-\theta n)\right|\\
&\leq & \lim_{N\to\infty}\left|\frac{1}{N}\sum_{n=1}^N P(n)e(-\theta n)\right| +\epsilon\\
&=&\epsilon.
\end{eqnarray*}
Since $\epsilon>0$ was chosen arbitrarily, we conclude that $\hat{f}(\theta)=0$. This shows that no irrational number $\theta$ belongs to $\sigma(f)$.
\end{proof}


The next lemma is a consequence of \cref{thm:FH-thm.2.9} and establishes a connection between the distance function $\mathbb{D}$, defined in Subsection \ref{subsec:multiplicative-functions},
and the Besicovitch seminorm $\|\cdot\|_B$.

\begin{Lemma}\label{lem:lH2-C}
Suppose $f\in\cm$. Then $\|f\|_B=0$ if and only if
$\mathbb{D}(|f|,1)=\infty$.
\end{Lemma}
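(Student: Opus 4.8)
The plan is to characterize when $\|f\|_B = 0$ for $f \in \cm$ via Hal\'asz's theorem applied to the nonnegative multiplicative function $|f|$. First I would observe that $|f| \in \cm$ and $|f|$ is real-valued and nonnegative, so by \cref{thm:wirsing} (Wirsing's theorem) its mean value $M(|f|)$ exists. Moreover, since $|f| \geq 0$, the averages $\frac1N\sum_{n=1}^N |f(n)|$ are nonnegative and their limit is exactly $M(|f|)$; hence $\|f\|_B = M(|f|)$, and the claim $\|f\|_B = 0$ is equivalent to $M(|f|) = 0$.

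Next I would apply \cref{thm:FH-thm.2.9} (Hal\'asz's theorem) to $g := |f|$. By that theorem, $M(g) = 0$ holds if and only if $g$ falls into case \ref{item:E79-thm6.3-iii} or case \ref{item:E79-thm6.3-iv}. Now I would use that $g = |f|$ is nonnegative and bounded by $1$: since $g(n) \in [0,1]$ for all $n$, for every prime power $2^k$ we have $g(2^k) \geq 0 \neq -2^{itk}$ whenever $2^{itk}$ has modulus $1$ — more precisely $g(2^k) = -2^{itk}$ would force $|g(2^k)| = 1$ and $g(2^k) < 0$, which is impossible. Thus case \ref{item:E79-thm6.3-iii} cannot occur for $g = |f|$. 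Therefore $M(|f|) = 0$ if and only if case \ref{item:E79-thm6.3-iv} holds, i.e. $\mathbb{D}(|f|, n^{it}) = \infty$ for all $t \in \R$. To finish, I would note that by \cref{DirArch} (with $\chi = \chi_1$ the principal character, or directly the ``in particular'' clause) $\mathbb{D}(1, n^{it}) = \infty$ for all $t \neq 0$; combined with the triangle inequality (\cref{rem:basic-properties-of-D}\ref{item:rem:basic-properties-of-D-2}) and property \ref{item:rem:basic-properties-of-D-5} this shows that $\mathbb{D}(|f|, n^{it}) = \infty$ for all $t \in \R$ is equivalent to $\mathbb{D}(|f|, 1) = \infty$: indeed if $\mathbb{D}(|f|,1) < \infty$ then trivially $\mathbb{D}(|f|, n^{i \cdot 0}) < \infty$, while if $\mathbb{D}(|f|, 1) = \infty$ then for $t \neq 0$ we get $\mathbb{D}(|f|, n^{it}) \geq \mathbb{D}(1, n^{it}) - \mathbb{D}(|f|, 1)$... this direction needs care, so instead I would argue: if $\mathbb{D}(|f|, n^{it_0}) < \infty$ for some $t_0 \neq 0$, then since also (by case analysis we want to rule this out) — actually the cleanest route is that $g(2^k) \geq 0$ while case \ref{item:E79-thm6.3-iii} needs $g(2^k) = -2^{it k}$, so the only way $M(g) = 0$ is case \ref{item:E79-thm6.3-iv}, which by definition is ``$\mathbb{D}(g, n^{it}) = \infty$ for all $t$'', and taking $t = 0$ gives $\mathbb{D}(|f|, 1) = \infty$; conversely if $\mathbb{D}(|f|,1) = \infty$ then $g$ is not in case \ref{item:E79-thm6.3-i} (which requires $\sum_p \frac1p(1 - g(p))$ to converge, forcing $\mathbb{D}(g,1) < \infty$) and not in case \ref{item:E79-thm6.3-iii} (as shown), hence $g$ is in case \ref{item:E79-thm6.3-iv} and $M(g) = 0$.

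The main obstacle I anticipate is the bookkeeping in translating between ``$\mathbb{D}(|f|, n^{it}) = \infty$ for all $t$'' (the literal statement of Hal\'asz case \ref{item:E79-thm6.3-iv}) and the single condition ``$\mathbb{D}(|f|, 1) = \infty$'': one has to observe that for a nonnegative $g \in \cm$, finiteness of $\mathbb{D}(g, n^{it})$ for a nonzero $t$ is incompatible with the structure (since $\mathbb{D}(1, n^{it}) = \infty$ and the triangle inequality would then give $\mathbb{D}(g,1) = \infty$, but also $g(2^k) \geq 0 \neq -2^{itk}$ must be reconciled with case classification), so effectively the only relevant value is $t = 0$. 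Carefully invoking \cref{DirArch} and the triangle inequality handles this. The rest — that $\|f\|_B = M(|f|)$ and that case \ref{item:E79-thm6.3-iii} is vacuous for nonnegative $g$ — is routine.
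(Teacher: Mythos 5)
Your proof is correct in substance and shares the paper's core strategy: reduce $\|f\|_B=0$ to $M(|f|)=0$ and then classify $g:=|f|$ via Hal\'asz's theorem (\cref{thm:FH-thm.2.9}). Where you genuinely differ is in how you dispose of the quantifier ``for all $t$'' hidden in case \ref{item:E79-thm6.3-iv}. The paper proves directly that $\mathbb{D}(|f|,n^{it})=\infty$ for every $t\neq 0$, via the pointwise bound $1-|f(p)|\cos(t\log p)\geq \min\left(1,1-\cos(t\log p)\right)$ combined with \cref{DirArch}; this simultaneously kills case \ref{item:E79-thm6.3-iii} and collapses case \ref{item:E79-thm6.3-iv} to the single condition $\mathbb{D}(|f|,1)=\infty$. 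You instead argue by elimination: Wirsing guarantees that $M(|f|)$ exists, so exactly one of the three mutually exclusive cases holds; for nonnegative real $g$ the series in case \ref{item:E79-thm6.3-i} is exactly $\mathbb{D}(g,1)^2$ and $g(2^k)\neq -1$ is automatic, so case \ref{item:E79-thm6.3-i} holds if and only if $\mathbb{D}(|f|,1)<\infty$; case \ref{item:E79-thm6.3-iii} is vacuous; hence $M(|f|)=0$ if and only if $\mathbb{D}(|f|,1)=\infty$. This is a legitimate shortcut that avoids the analytic inequality entirely (and, as you correctly sensed, the triangle-inequality detour you first try is a dead end in the direction you need, but the elimination argument you settle on does not require it).

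One justification does need repair. You assert that $g(2^k)=-2^{itk}$ would force $g(2^k)<0$. That is false in general: $-2^{itk}$ is a unimodular complex number and equals $+1$ whenever $2^{itk}=-1$, which can certainly happen for an individual $k$. What rescues the claim is that case \ref{item:E79-thm6.3-iii} requires the identity for \emph{every} positive integer $k$: a nonnegative real $g(2^k)$ of modulus $1$ forces $2^{itk}=-1$ for all $k$, and then $2^{2it}=\bigl(2^{it}\bigr)^2=1\neq -1$ gives a contradiction (while for $t=0$ one gets $g(2^k)=-1$ outright, impossible for $g=|f|\geq 0$). With that one line inserted, your proof is complete.
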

\begin{proof}
First, observe that $\|f\|_B=0$ if and only if the mean value of the multiplicative function $|f|$ is zero, i.e., $M(|f|)=0$. In view of \cref{thm:FH-thm.2.9}, the mean value of $|f|$ is zero if and only if $|f|$ satisfies either condition \ref{item:E79-thm6.3-iii} or condition \ref{item:E79-thm6.3-iv} of the theorem.
Since
$$
1-|f(p)|\cos\left(t{\log(p)}\right)\geq \min\left(1,1-\cos\left(t{\log(p)}\right)\right),\qquad\forall p\in\P,
$$
and $\mathbb{D}(1,n^{it})=\infty$ for all $t\neq 0$ (cf.\ \cref{item:rem:basic-properties-of-D-4}), it follows that $\mathbb{D}(|f|,n^{it})=\infty$ for all $t\neq 0$.
Therefore $|f|$ cannot satisfy condition \ref{item:E79-thm6.3-iii} of \cref{thm:FH-thm.2.9}. Hence $M(|f|)=0$ if and only if $f$ satisfies condition \ref{item:E79-thm6.3-iv} of \cref{thm:FH-thm.2.9}. Finally, observe that $|f|$ satisfies condition \ref{item:E79-thm6.3-iv} if and only if $\mathbb{D}(|f|,1)=\infty$.
\end{proof}

In \cite{DD82, DD74} Daboussi and Delange give necessary and
sufficient conditions for a bounded multiplicative function to be Besicovitch almost periodic:

\begin{Theorem}[{\cite[Theorem 6]{DD82}}]
\label{thm:DD82-thm6}
A function $f\in\cm$ is Besicovitch almost periodic if and only if
either $\|f\|_B=0$ or there exists a Dirichlet character $\chi$ such that
$\sum_{p\in\P}\frac{1}{p}(1-f(p)\overline{\chi(p)})$ converges.\footnote{Actually, Daboussi and Delange prove their theorem for the larger class of multiplicative functions satisfying $\sum_{n\leq x} |f(n)|^2=\Oh(x)$.}
\end{Theorem}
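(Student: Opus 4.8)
The plan is to prove the two implications of the biconditional separately, using as inputs only \cref{thm:wirsing} (Wirsing), \cref{thm:FH-thm.2.9} (Hal\'asz), \cref{thm:DD82-thm.1.} (Daboussi) and \cref{thm:besicovitch-approximation-theorem}, together with elementary manipulations of functions in $\cm$ and of $\|\cdot\|_B$; in particular I will use freely that a product of two bounded Besicovitch almost periodic — resp.\ Besicovitch rationally almost periodic — functions is again of the same type, which follows by multiplying the approximating Bochner--Fej\'er, resp.\ periodic, functions (which may be taken bounded by the sup norm of the approximated function). The case $\|f\|_B=0$ is trivial in both directions, so I assume $\|f\|_B\neq0$. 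For the ``if'' direction, suppose there is a Dirichlet character $\chi$ of modulus $d$ with $\sum_p\tfrac1p(1-f(p)\overline{\chi(p)})$ convergent. From $|1-z|^2\leq 4(1-\Re z)$ for $|z|\leq1$ one extracts both $\sum_p\tfrac1p(1-\Re(f(p)\overline{\chi(p)}))<\infty$ and $\sum_p\tfrac1p|1-f(p)\overline{\chi(p)}|^2<\infty$, and combining the first with $1-|f(p)|\leq 1-\Re(f(p)\overline{\chi(p)})$ (for $p\nmid d$) also $\sum_p\tfrac1p(1-|f(p)|^2)<\infty$. Set $\chi':=\chi\chi_1$, the Dirichlet character of modulus $d$ induced by $\chi$, and $\tilde f:=f\overline{\chi'}\in\cm$; then $\tilde f(p)=0$ for $p\mid d$, $\tilde f(p)=f(p)\overline{\chi(p)}$ for $p\nmid d$, and $\sum_p\tfrac1p(1-\tilde f(p))=\sum_p\tfrac1p(1-f(p)\overline{\chi(p)})$ converges. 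I claim it suffices to prove that $\tilde f$ is Besicovitch almost periodic. Indeed, write $\N=\bigsqcup_e A_e$ with $e$ ranging over the integers composed only of primes dividing $d$ and $A_e:=\{n:v_p(n)=v_p(e)\text{ for all }p\mid d\}$; each $A_e$ is periodic of positive density, $\sum_e d(A_e)=1$, and on $A_e$ one has $f(n)=f(e)\,\chi(n/e)\,\tilde f(n/e)$. Thus $f\1_{A_e}$ is the product of the periodic function $n\mapsto f(e)\chi(n/e)\1_{A_e}(n)$ with $n\mapsto\tilde f(n/e)\1_{A_e}(n)$, and the latter is Besicovitch almost periodic whenever $\tilde f$ is, since dilation of the argument and restriction to a periodic set preserve $\|\cdot\|_B$-approximability. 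As the tails of $\sum_e f\1_{A_e}$ are $\|\cdot\|_B$-negligible, $f$ is then Besicovitch almost periodic.

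To prove $\tilde f$ is Besicovitch almost periodic — the core of the argument — I would fix $P\geq2$ and factor $\tilde f=\tilde f_{[P]}\cdot\tilde f^{[P]}$, where $\tilde f_{[P]}(n):=\prod_{p\leq P}\tilde f(p^{v_p(n)})$ and $\tilde f^{[P]}(n):=\prod_{p>P}\tilde f(p^{v_p(n)})$; both lie in $\cm$, and $\tilde f^{[P]}(p)=1$ for $p\leq P$ while $\tilde f^{[P]}(p)=\tilde f(p)$ for $p>P$. The point is that $\tilde f^{[P]}$ and $|\tilde f^{[P]}|^2$ each satisfy condition~\ref{item:E79-thm6.3-i} of \cref{thm:FH-thm.2.9} (the value at $2$ is $1\neq-1$, and $\sum_p\tfrac1p(1-\tilde f^{[P]}(p))$, resp.\ $\sum_p\tfrac1p(1-|\tilde f^{[P]}(p)|^2)$, is a tail of a convergent series), so their mean values exist and, since the local factor at every $p\leq P$ equals $1$, \eqref{eq:mean-value-0} gives
\[
M(\tilde f^{[P]})=\prod_{p>P}\Bigl(1-\tfrac1p\Bigr)\Bigl(1+\sum_{m\geq1}p^{-m}\tilde f(p^m)\Bigr),\qquad M(|\tilde f^{[P]}|^2)=\prod_{p>P}\Bigl(1-\tfrac1p\Bigr)\Bigl(1+\sum_{m\geq1}p^{-m}|\tilde f(p^m)|^2\Bigr).
\]
Both products tend to $1$ as $P\to\infty$, their logarithms being $\sum_{p>P}\tfrac{\tilde f(p)-1}{p}+O(\sum_{p>P}p^{-2})$, resp.\ $\sum_{p>P}\tfrac{|\tilde f(p)|^2-1}{p}+O(\sum_{p>P}p^{-2})$, which are tails of convergent series. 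Hence $\lim_{N\to\infty}\tfrac1N\sum_{n\leq N}|\tilde f^{[P]}(n)-1|^2=M(|\tilde f^{[P]}|^2)-2\Re M(\tilde f^{[P]})+1\to0$ as $P\to\infty$, and since $|\tilde f_{[P]}|\leq1$, Cauchy--Schwarz yields $\|\tilde f-\tilde f_{[P]}\|_B\leq\big(M(|\tilde f^{[P]}|^2)-2\Re M(\tilde f^{[P]})+1\big)^{1/2}\to0$. Finally each $\tilde f_{[P]}$ is Besicovitch rationally almost periodic, being the $\|\cdot\|_B$-limit as $K\to\infty$ of the periodic functions $n\mapsto\prod_{p\leq P}\tilde f(p^{\min(v_p(n),K)})$. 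Therefore $\tilde f$, and with it $f$, is Besicovitch almost periodic (in fact rationally so).

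For the ``only if'' direction, assume $f$ is Besicovitch almost periodic with $\|f\|_B\neq0$. Then $\sigma(f)\neq\emptyset$ by \cref{thm:besicovitch-approximation-theorem}, and by \cref{thm:DD82-thm.1.} every element of $\sigma(f)$ is rational; fix $a/q\in\sigma(f)$ with $\gcd(a,q)=1$, so $\hat f(a/q)\neq0$. Since $f$ is Besicovitch almost periodic, all the limits $\lim_{N\to\infty}\tfrac1N\sum_{n\leq N,\,n\equiv b\,(q)}f(n)$ exist, and hence so does $M(f\psi')$ for every Dirichlet character $\psi'$ of modulus $q$. Unfolding the $q$-periodic function $n\mapsto e(-na/q)$ — splitting $n$ according to the part $e$ of $n$ composed of primes dividing $q$, writing $n=em$ with $\gcd(m,q)=1$ and $f(n)=f(e)f(m)$, and Fourier-expanding the resulting additive character of $(\Z/(q/\gcd(e,q))\Z)^*$ into Dirichlet characters — expresses $\hat f(a/q)$ as an absolutely convergent linear combination of the mean values $M(f\psi')$, $\psi'$ over the Dirichlet characters of modulus $q$. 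Since $\hat f(a/q)\neq0$, some $M(f\psi')\neq0$. But $f\psi'\in\cm$ has a nonzero mean value, so by \cref{thm:FH-thm.2.9} it is in case~\ref{item:E79-thm6.3-i}: the series $\sum_p\tfrac1p(1-f(p)\psi'(p))$ converges. Taking $\chi:=\overline{\psi'}$ (again a Dirichlet character) gives a convergent series $\sum_p\tfrac1p(1-f(p)\overline{\chi(p)})$, completing the proof.

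The substantial inputs being Wirsing's and Hal\'asz's theorems, Daboussi's theorem and the Besicovitch approximation theorem, the only genuinely new idea is the passage to the multiplicative ``rough part'' $\tilde f^{[P]}$, which converts the approximation of $\tilde f$ into the explicit mean-value computation via \eqref{eq:mean-value-0}; I expect this to be the step one has to spot, though it is short once identified. The part demanding the most care is the bookkeeping around the primes dividing $d$ — the reduction of $f$ to $\tilde f$ through $\N=\bigsqcup_e A_e$ — and, dually, around the primes dividing $q$ in the unfolding of $\hat f(a/q)$: both amount to summing an absolutely convergent but infinite family of contributions indexed by smooth numbers and interchanging this sum with the density limit, which is routine but tedious.
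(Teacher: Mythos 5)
Your proof is correct. Note, however, that the paper does not prove this statement at all: it is imported verbatim as Theorem 6 of Daboussi--Delange \cite{DD82} (with a footnote recording that they prove it for a wider class). So there is no internal argument to compare against; what you have done is reconstruct a self-contained proof of the Daboussi--Delange theorem from the other results the paper quotes (Hal\'asz's mean value theorem with the Euler product formula \eqref{eq:mean-value-0}, Daboussi's \cref{thm:DD82-thm.1.}, and the Bochner--Fej\'er approximation \cref{thm:besicovitch-approximation-theorem}), and this is essentially the classical argument. The key steps all check out: in the ``if'' direction, the splitting $\tilde f=\tilde f_{[P]}\cdot\tilde f^{[P]}$ together with the identity $\tfrac1N\sum_{n\leq N}|\tilde f^{[P]}(n)-1|^2\to M(|\tilde f^{[P]}|^2)-2\Re M(\tilde f^{[P]})+1$ and the Euler-product computation showing both means tend to $1$ is exactly the right mechanism (and, as you note, it yields the stronger conclusion that $f$ is Besicovitch \emph{rationally} almost periodic, consistent with \cref{cor:DD82-thm1-and-thm6}); in the ``only if'' direction, the unfolding of $\hat f(a/q)$ over the $q$-smooth part of $n$ into an absolutely convergent combination of the means $M(f\psi')$ is sound, and the passage from $M(f\psi')\neq0$ to case \ref{item:E79-thm6.3-i} of \cref{thm:FH-thm.2.9} is exactly how the characters (ii) and (iii) are excluded. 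The only imprecisions are cosmetic: the constant in $|1-z|^2\leq 2(1-\Re z)$ (you wrote $4$, which is still valid), and the need to take $P$ large enough that all Euler factors over $p>P$ are nonzero before taking logarithms --- both harmless.
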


From \cref{thm:DD82-thm6} we obtain the following corollary.

\begin{Corollary}
\label{cor:DD82-thm1-and-thm6}
Let $f\in\cm$. The following are equivalent:
\begin{enumerate}
[label=\text{(\roman*)}, ref=\text{(\roman*)}, leftmargin=*]
\item\label{item:DD82-thm1-and-thm6-i}
$f$ is Besicovitch almost periodic;
\item\label{item:DD82-thm1-and-thm6-ii}
$f$ is Besicovitch rationally almost periodic;
\item\label{item:DD82-thm1-and-thm6-iii}
either $\|f\|_B=0$ or there exists a Dirichlet character $\chi$ such that
$\sum_{p\in\P}\frac{1}{p}(1-f(p)\overline{\chi(p)})$ converges.
\end{enumerate}
\end{Corollary}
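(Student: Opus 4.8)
The plan is to deduce \cref{cor:DD82-thm1-and-thm6} from \cref{thm:DD82-thm6}, \cref{cor:RAP-have-rational-spectrum} and \cref{thm:DD82-thm.1.}. The implication \ref{item:DD82-thm1-and-thm6-ii}$\Rightarrow$\ref{item:DD82-thm1-and-thm6-i} is immediate from the definitions, since every periodic function (more precisely, every trigonometric polynomial with rational frequencies) is in particular a trigonometric polynomial, so Besicovitch rational almost periodicity trivially implies Besicovitch almost periodicity. The equivalence \ref{item:DD82-thm1-and-thm6-i}$\Leftrightarrow$\ref{item:DD82-thm1-and-thm6-iii} is precisely the content of \cref{thm:DD82-thm6}. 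Hence the only substantive point is the implication \ref{item:DD82-thm1-and-thm6-i}$\Rightarrow$\ref{item:DD82-thm1-and-thm6-ii}.

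For this implication I would argue as follows. Suppose $f\in\cm$ is Besicovitch almost periodic. By \cref{cor:RAP-have-rational-spectrum} it suffices to show that $f$ has rational spectrum, i.e. that $\hat f(\theta)=0$ for every irrational $\theta\in[0,1)$. But this is exactly what \cref{thm:DD82-thm.1.} gives: for any $f\in\cm$ and any irrational $\theta$ one has $\lim_{N\to\infty}\frac1N\sum_{n=1}^N f(n)e(\theta n)=0$, and since $f$ is Besicovitch almost periodic this limit equals $\hat f(-\theta)$ (equivalently $\overline{\hat{\bar f}(\theta)}$, up to the sign convention in the definition of $\hat f$); in any case $\sigma(f)\subset\Q\cap[0,1)$. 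Therefore $f$ has rational spectrum, and \cref{cor:RAP-have-rational-spectrum} then yields that $f$ is Besicovitch rationally almost periodic.

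Putting the three implications together closes the cycle: \ref{item:DD82-thm1-and-thm6-i}$\Rightarrow$\ref{item:DD82-thm1-and-thm6-ii} (via \cref{thm:DD82-thm.1.} and \cref{cor:RAP-have-rational-spectrum}), \ref{item:DD82-thm1-and-thm6-ii}$\Rightarrow$\ref{item:DD82-thm1-and-thm6-i} (trivial), and \ref{item:DD82-thm1-and-thm6-i}$\Leftrightarrow$\ref{item:DD82-thm1-and-thm6-iii} (\cref{thm:DD82-thm6}). I do not expect any real obstacle here; the proof is essentially a bookkeeping exercise combining results already established in the excerpt. The only point requiring a modicum of care is matching the sign/conjugation conventions in the definition of $\hat f(\theta)$ with the statement of \cref{thm:DD82-thm.1.} (which is phrased with $e(\theta n)$ rather than $e(-\theta n)$), but this is harmless because $\theta$ is irrational if and only if $-\theta$ is, and $\cm$ is closed under complex conjugation by \cref{rem:basic-properties-of-D}.
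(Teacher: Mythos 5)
Your proposal is correct and follows essentially the same route as the paper: the equivalence with \ref{item:DD82-thm1-and-thm6-iii} comes from \cref{thm:DD82-thm6}, the implication \ref{item:DD82-thm1-and-thm6-ii}$\Rightarrow$\ref{item:DD82-thm1-and-thm6-i} is trivial, and the substantive implication \ref{item:DD82-thm1-and-thm6-i}$\Rightarrow$\ref{item:DD82-thm1-and-thm6-ii} is obtained exactly as in the paper by combining \cref{thm:DD82-thm.1.} (to get rational spectrum) with \cref{cor:RAP-have-rational-spectrum}. Your extra remark about the sign convention in $\hat f(\theta)$ is a harmless point of care that the paper glosses over.
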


\begin{proof}
The equivalence of \ref{item:DD82-thm1-and-thm6-ii} and \ref{item:DD82-thm1-and-thm6-iii} is given by \cref{thm:DD82-thm6}. Also, the fact that \ref{item:DD82-thm1-and-thm6-ii} implies \ref{item:DD82-thm1-and-thm6-i} is obvious.
It thus remains to show that \ref{item:DD82-thm1-and-thm6-i} implies \ref{item:DD82-thm1-and-thm6-ii}. However, from \cref{thm:DD82-thm.1.} we deduce that any multiplicative function $f$ has rational spectrum, which, in view of \cref{cor:RAP-have-rational-spectrum}, implies that $f$ is Besicovitch rationally almost periodic.
\end{proof}

\begin{Remark}\label{ren:0-1-valued-RAP}
We claim that any bounded multiplicative function $f$ taking values in $[0,\infty)$ is Besicovitch rationally almost periodic.

Let us first prove the claim for the special case when $0\leq f(n)\leq 1$ for all $n\in \N$. If $\Vert f\Vert_B =0$ then $f$ is Besicovitch rationally almost periodic for trivial reasons; it thus suffices to verify the claim for $f$ with $\Vert f\Vert_B >0$. In view of \cref{lem:lH2-C} it follows from $\Vert f\Vert_B >0$ that $\mathbb{D}(f,1)<\infty$. Since $f$ only takes values in the interval $[0,1]$, the assertion $\mathbb{D}(f,1)<\infty$ is equivalent to the fact that the series $\sum_{p\in\P}\frac{1}{p}(1-f(p))$ converges. Therefore, using \ref{item:DD82-thm1-and-thm6-iii} $\Rightarrow$ \ref{item:DD82-thm1-and-thm6-ii} of \cref{cor:DD82-thm1-and-thm6}, we conclude that $f$ is Besicovitch rationally almost periodic.

Next, assume $f$ takes values in $[0,b)$ for some $b\geq 1$. Define two new multiplicative functions $g$ and $h$ via 
$$
g(p^k):=
\begin{cases}
f(p^k),&\text{if}~f(p^k)\leq 1
\\
1,&\text{if}~f(p^k) >1
\end{cases}
\qquad\text{and}\qquad
h(p^k):=
\begin{cases}
1,&\text{if}~f(p^k)\leq 1
\\
f(p^k),&\text{if}~f(p^k) >1.
\end{cases}
$$
Clearly, $f(n)=g(n)h(n)$for all $n\in\N$. Moreover, $g$ and $\frac{1}{h}$ are multiplicative functions taking values in $[0,1]$. It follows from the previous paragraph that both $g$ and $\frac{1}{h}$ are Besicovitch rationally almost periodic.
Since $\frac{1}{h}$ is Besicovitch rationally almost periodic, for every $\epsilon>0$ there exists a periodic function $P$ with $\|\frac{1}{h}-P\|_B\leq \epsilon$. Since $\frac{1}{b}\leq \frac{1}{h(n)}\leq 1$, we can assume without loss of generality that $\frac{1}{b}\leq P(n)\leq 1$. It is then straightforward to show that $\|h-\frac{1}{P}\|_B\leq b^2\epsilon$, which proves that $h$ is also Besicovitch rationally almost periodic. Finally, observe that $f$, as a product of two Besicovitch rationally almost periodic functions, is itself Besicovitch rationally almost periodic.
\end{Remark}

\subsection{Ruzsa's theorem and some of its corollaries}
\label{sec:ruzsa}

In this short section we formulate a theorem of Ruzsa that shows that the density of a level set of a multiplicative function always exists and which gives necessary and sufficient conditions for this density to be positive. We also derive additional corollaries from this theorem which will be used in the latter sections of this paper. 

Let $r\in\N$. A function $\vec{f}=(f_1,\ldots,f_r)\colon\N\to\C^r$ is called \define{multiplicative} if each of its coordinate components $f_i\colon\N\to\C$ is a multiplicative function. 
We say that a point $\vec{z}\in \C^r$ is a \define{concentration point} for a multiplicative function $\vec{f}\colon\N\to\C^r$ if the set $P:=\{p\in\P: \vec{f}(p)=\vec{z}\}$ satisfies $\sum_{p\in P}\frac{1}{p}=\infty$. In the following, we denote by $im(\vec{f})$ the image of $\vec{f}$ and, for $\vec{z}\in im(\vec{f})$, we use $E(\vec{f},\vec{z}):=\{n\in\N: \vec{f}(n)=\vec{z}\}$ to denote level sets of $\vec{f}$.

\begin{Definition}[cf.\ {\cite[Definition 3.8]{Ruzsa77}}]\label{def:concentration-pt-old}
\label{def:ST-concentrated-functions}
Assume that a multiplicative function $\vec{f}\colon\N\to(\C\setminus\{0\})^r$ possesses at least one concentration point $\vec{z}=(z_1,\ldots,z_r)$. The subgroup $\G$ of the multiplicative group $((\C\setminus\{0\})^r,\cdot)$ generated by all concentration points of $\vec{f}$ is called the \define{concentration group} of $\vec{f}$.
\end{Definition}


\begin{Theorem}[cf.\ {\cite[Theorem 3.10]{Ruzsa77}}]\label{thm:ST-Ruzsa-3.10}
Let $\vec{f}\colon \N\to (\C\setminus\{0\})^r$ be a multiplicative function.
\begin{enumerate}
[label=\text{(\arabic*)}, ref=\text{(\arabic*)}, leftmargin=*]
\item\label{itm_p1}
Assume that $\vec{f}$ satisfies the following three conditions:
\begin{enumerate}
[label=\text{(\alph*)}, ref=\text{(\alph*)}, leftmargin=*]
\item\label{itm_a}
$\vec{f}$ has at least one concentration point,
\item\label{itm_b}
the concentration group $\G$ of $\vec{f}$ is finite, and
\item\label{itm_c}
$
\sum_{\substack{p\in\P,\\ \vec{f}(p)\notin\G}}\frac{1}{p}<\infty.
$
\end{enumerate}
Then $d(E(\vec{f},\vec{z}))$ exists and is strictly positive for all $\vec{z}\in im(\vec{f})$. Moreover,
$$
\sum_{\vec{z}\in im(\vec{f})}d(E(\vec{f},\vec{z}))=1.
$$ 
\item
If $\vec{f}$ does not satisfy (at least) one of the conditions  \ref{itm_a}, \ref{itm_b} or \ref{itm_c} of part \ref{itm_p1}, then $d(E(\vec{f},\vec{z}))=0$ for all $\vec{z}\in im(\vec{f})$.
\end{enumerate}
\end{Theorem}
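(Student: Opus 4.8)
The plan is to reduce to the case where $\vec f$ takes values in the finite abelian concentration group $\G$, to settle that case by Fourier analysis on $\G$ together with Hal\'asz's theorem (\cref{thm:FH-thm.2.9}), and to deduce part (2) by a branching/counting argument. So suppose first that $\vec f\colon\N\to\G$ with $\G$ finite and generated by the concentration points of $\vec f$. Since $\G$ is a finite abelian group, orthogonality of characters gives, for every $\vec z\in\G$,
\[
\1_{E(\vec f,\vec z)}(n)=\frac1{|\G|}\sum_{\psi\in\widehat\G}\psi(\vec f(n))\,\overline{\psi(\vec z)},
\]
so, once every scalar function $\psi\circ\vec f\in\cm$ is known to have a mean value, $d(E(\vec f,\vec z))$ exists and equals $\tfrac1{|\G|}\sum_\psi\overline{\psi(\vec z)}\,M(\psi\circ\vec f)$. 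I would then show $M(\psi\circ\vec f)=0$ for every non-principal $\psi$. Indeed $\psi\circ\vec f$ is valued in the roots of unity of order dividing $|\G|$, so $(\psi\circ\vec f)^{|\G|}=\1$; hence by \ref{item:rem:basic-properties-of-D-3} and $\mathbb D(\1,n^{it})=\infty$ for $t\neq0$ (\cref{DirArch}), the inequality $\mathbb D(\1,n^{i|\G|t})\leq|\G|\,\mathbb D(\psi\circ\vec f,n^{it})$ forces $\mathbb D(\psi\circ\vec f,n^{it})=\infty$ for all $t\neq0$. And if $\psi\circ\vec f$ were pretentious to $\1$, or had nonzero mean, then $\mathbb D(\psi\circ\vec f,\1)<\infty$; but for any concentration point $\vec w$ one has $\vec f(p)=\vec w$ on a prime set of infinite reciprocal sum, which would force $\Re\,\psi(\vec w)=1$, hence $\psi(\vec w)=1$, and as the concentration points generate $\G$ this would make $\psi$ principal, a contradiction. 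Thus $\psi\circ\vec f$ falls in the non-pretentious case of Hal\'asz's theorem, where the mean value is $0$. It follows that $d(E(\vec f,\vec z))=\tfrac1{|\G|}$ for all $\vec z\in\G$; and since the image of a multiplicative function into a finite group is a subgroup and contains all concentration points, $im(\vec f)=\G$ and $\sum_{\vec z\in im(\vec f)}d(E(\vec f,\vec z))=1$.

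To reduce the general case to this one, I would set $B:=\{p^k:k\geq1,\ \vec f(p^k)\notin\G\}$; by \ref{itm_c} together with $\sum_p\sum_{k\geq2}p^{-k}<\infty$ one has $\sum_{q\in B}\tfrac1q<\infty$, and the tail of this series over prime powers with large underlying prime tends to $0$. Fixing a threshold $y$ and factoring $n=n_1n_2$ into its $y$-smooth and $y$-rough parts, one restricts — up to a density error vanishing as $y\to\infty$ — to those $n$ whose rough part $n_2$ has all its prime-power components in $\G$; then $\vec f(n)=\vec z$ becomes $\vec f(n_2)=\vec f(n_1)^{-1}\vec z$, solvable only when $\vec f(n_1)^{-1}\vec z\in\G$. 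Summing the finite-group densities of the previous paragraph over the $y$-smooth $n_1$ — legitimate since $\sum_{n_1\ y\text{-smooth}}\tfrac1{n_1}<\infty$ — and letting $y\to\infty$, a fundamental-lemma-type computation yields that $d(E(\vec f,\vec z))$ exists and is positive for every $\vec z\in im(\vec f)$ (now a union of cosets of $\G$), with the densities summing to $1$; this proves part (1). For part (2) I argue by contraposition: if $\overline d(E(\vec f,\vec z_0))=\delta>0$, then for each prime $p$ the injection $n\mapsto pn$ on $E(\vec f,\vec z_0)\cap\{p\nmid n\}$ gives $\overline d(E(\vec f,\vec f(p)\vec z_0))\geq\tfrac1p(\delta-\tfrac1p)$, and $\vec w\mapsto\vec w\vec z_0$ being injective on $(\C\setminus\{0\})^r$, distinct values $\vec f(p)$ produce disjoint level sets. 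If any of \ref{itm_a}, \ref{itm_b} or \ref{itm_c} fails then — recalling that any value $\vec w$ with $\sum_{\vec f(p)=\vec w}\tfrac1p=\infty$ is a concentration point and so lies in $\G$ — the primes on which $\vec f$ avoids the finitely many concentration points have infinite total reciprocal sum, and multiplying $E(\vec f,\vec z_0)$ by such primes (iterating, and controlling the small pairwise overlaps by a Mertens-type estimate) spreads it over level sets of infinite total lower density, contradicting $\sum_{\vec w\in im(\vec f)}\underline d(E(\vec f,\vec w))\leq1$. Hence every level set has density $0$.

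The genuinely delicate steps are in the reduction: making the smooth/rough decomposition quantitative with error terms uniform in $N$, checking that the auxiliary level-set densities factor multiplicatively through the smooth part, and handling the case $|im(\vec f)|=\infty$, where the density identity becomes an infinite series; the overlap bookkeeping needed for part (2) is of comparable difficulty. Everything else rests on two clean inputs — character orthogonality on the finite group $\G$, and Hal\'asz's theorem, whose pretentious alternatives are excluded by the concentration points.
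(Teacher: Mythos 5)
First, a point of order: the paper does not prove this statement at all --- it is quoted from Ruzsa (\cite[Theorem~3.10]{Ruzsa77}), so there is no internal proof to compare yours against. Judged on its own terms, your treatment of part~(1) in the $\G$-valued case is sound: the character decomposition of $\1_{E(\vec f,\vec z)}$, the exclusion of $\mathbb{D}(\psi\circ\vec f,n^{it})<\infty$ via $(\psi\circ\vec f)^{|\G|}=\1$ and \cref{DirArch}, and the exclusion of pretentiousness to $\1$ via the concentration points generating $\G$, correctly put each non-principal $\psi\circ\vec f$ into case \ref{item:E79-thm6.3-iv} of \cref{thm:FH-thm.2.9}. (Your parenthetical reason that ``the image of a multiplicative function into a finite group is a subgroup'' is false in general --- take $f(2)=i$ and $f(p^k)=1$ otherwise --- but the conclusion $im(\vec f)=\G$ does hold in the reduced case because each concentration point is attained at infinitely many primes.) The smooth/rough reduction is only a sketch, but it is the kind of sketch that can be completed along the lines of \cref{lem:lifting-trick-3} and \cref{lem:lifting-trick-7}.

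The genuine gap is in part~(2), in two places. First, your pivotal claim that if one of \ref{itm_a}--\ref{itm_c} fails then $\sum_{p:\,\vec f(p)\notin C}\frac1p=\infty$ (where $C$ is the set of concentration points) is simply false when \ref{itm_b} is the condition that fails. Take $r=1$ and $f$ completely multiplicative with $f(p)=e^{2\pi i\alpha}$ for \emph{every} prime $p$, $\alpha$ irrational: then $C=\{e^{2\pi i\alpha}\}$, $\G$ is infinite cyclic, so \ref{itm_b} fails, yet \emph{no} prime avoids $C$ and your branching argument produces nothing. (The level sets here are $\{n:\Omega(n)=m\}$, which do have density zero, but for Hardy--Ramanujan/Tur\'an--Kubilius reasons --- the additive function $\Omega$ has normal order $\log\log n$ --- not for the reason you give; this case needs a genuinely different input.) The same problem occurs when $C$ is infinite, where ``the finitely many concentration points'' is already wrong and $\sum_{p:\,\vec f(p)\notin C}\frac1p$ can be $0$. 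Second, even where your branching argument applies, it conflates upper and lower density: from $\overline d(E(\vec f,\vec z_0))>0$ the map $n\mapsto pn$ yields lower bounds only on the \emph{upper} densities of the disjoint target level sets, and upper densities of disjoint sets need not sum to at most $1$; your intended contradiction with $\sum_{\vec w}\underline d(E(\vec f,\vec w))\leq1$ therefore does not close. One must run the whole argument along a fixed sequence $N_j$ realizing the limsup and control $|E(\vec f,\vec z_0)\cap[N_j/p]|$ for many $p$ simultaneously, which is not automatic and is where the real work of Ruzsa's part~(2) lies.
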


Although we formulated \cref{thm:ST-Ruzsa-3.10} for arbitrary $r\in \N$, we will mostly deal with the special case $r=1$; the only exception is the proof of \cref{lem:common-unit-value} below, where we also need the case $r=2$.

\begin{Corollary}[cf.\ {\cite[Corollary 1.6 and the subsequent remark]{Ruzsa77}}]
\label{cor:multiplicative-fibers-have-density}
For any multiplicative function $f\colon \N\to\C$ and any $z\in\C$ the density of $E(f,z)$ exists.
\end{Corollary}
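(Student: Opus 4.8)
The plan is to reduce \cref{cor:multiplicative-fibers-have-density} to \cref{thm:ST-Ruzsa-3.10}. The only obstruction is that \cref{thm:ST-Ruzsa-3.10} is stated for \emph{non-vanishing} multiplicative functions $\vec f\colon\N\to(\C\setminus\{0\})^r$, whereas a general multiplicative $f\colon\N\to\C$ can take the value $0$. I would therefore attach to $f$ two auxiliary non-vanishing multiplicative functions: one, $\tilde f$, that agrees with $f$ wherever $f$ does not vanish, and one, $g^*$, whose sole purpose is to detect the set $\{n:f(n)=0\}$. Explicitly, I would define them on prime powers by
\[
\tilde f(p^k):=\begin{cases} f(p^k), & \text{if } f(p^k)\neq 0,\\ 1, & \text{if } f(p^k)=0,\end{cases}
\qquad\qquad
g^*(p^k):=\begin{cases} 1, & \text{if } f(p^k)\neq 0,\\ 2, & \text{if } f(p^k)=0,\end{cases}
\]
and extend them multiplicatively. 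Using $f(n)=\prod_{p^k\| n}f(p^k)$, two facts are then immediate: (i) $g^*(n)$ is a power of $2$, and $g^*(n)=1$ if and only if $f(n)\neq 0$; and (ii) if $f(n)\neq 0$, then $\tilde f(n)=f(n)$.

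Next I would split into the cases $z\neq 0$ and $z=0$. For $z\neq 0$, the crucial point is the \emph{exact} set identity
\[
E(f,z)=E\big((\tilde f,g^*),(z,1)\big):
\]
if $f(n)=z\neq 0$, then $g^*(n)=1$ by (i) and $\tilde f(n)=f(n)=z$ by (ii); conversely, $g^*(n)=1$ forces $f(n)\neq 0$ by (i), hence $\tilde f(n)=f(n)$ by (ii), so $\tilde f(n)=z$ gives $f(n)=z$. Since $(\tilde f,g^*)\colon\N\to(\C\setminus\{0\})^2$ is a multiplicative function, I would then apply \cref{thm:ST-Ruzsa-3.10} with $r=2$: in both alternatives of that theorem the density $d\big(E((\tilde f,g^*),\vec z)\big)$ exists for every $\vec z$ in the image of $(\tilde f,g^*)$ (it is positive in the first alternative and equals $0$ in the second), while if $(z,1)$ does not lie in the image then $E(f,z)=\emptyset$; in all cases $d(E(f,z))$ exists. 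For $z=0$, fact (i) gives $E(f,0)=\N\setminus E(g^*,1)$, so \cref{thm:ST-Ruzsa-3.10} with $r=1$ applied to $g^*$ yields the existence of $d(E(g^*,1))$, and hence of $d(E(f,0))=1-d(E(g^*,1))$.

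I expect the only genuinely nontrivial ingredient to be arranging the \emph{exact} identity $E(f,z)=E((\tilde f,g^*),(z,1))$, as opposed to settling for an inclusion or an approximate equality: this is precisely what forces the introduction of the second coordinate $g^*$ and dictates the shape of its substitute value, which merely has to be some number (for instance $2$, or any real greater than $1$) none of whose positive powers equals $1$, so that ``$g^*(n)=1$'' becomes equivalent to ``$f(n)\neq 0$''. As a side remark, the case $z=0$ can alternatively be dispatched without invoking Ruzsa's theorem: the indicator $\1_{\{n:f(n)\neq 0\}}$ is itself a $\{0,1\}$-valued multiplicative function, so \cref{ren:0-1-valued-RAP} shows that it is Besicovitch rationally almost periodic and therefore has a mean value, which is exactly $d\big(\{n:f(n)\neq 0\}\big)$.
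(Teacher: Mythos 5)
Your argument is correct. Note, however, that the paper does not prove \cref{cor:multiplicative-fibers-have-density} at all: it is quoted directly from Ruzsa's work, so there is no in-paper proof to compare against. Your derivation from \cref{thm:ST-Ruzsa-3.10} is a clean, self-contained substitute. It is worth pointing out that your device is a close cousin of the one the paper itself uses later, in \cref{cor:density->concentrated function} and in the proof of \cref{lem:common-unit-value}: there, a single non-vanishing function $g$ is built by replacing the zero values $f(p^k)=0$ with a substitute $y$ chosen so that $(y^n\cdot J)\cap J=\emptyset$ for $J=im(f)\setminus\{0\}$, which already forces the exact identity $E(f,z)=E(g,z)$ for every $z\neq 0$ in one coordinate; your version instead keeps the first coordinate's substitute trivial ($1$) and delegates the detection of $\{n:f(n)=0\}$ to a second coordinate $g^*$. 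The two are interchangeable for $z\neq 0$; your two-coordinate formulation has the small advantage of also handling $z=0$ uniformly via $E(f,0)=\N\setminus E(g^*,1)$ (and, as you note, that case also follows independently from \cref{ren:0-1-valued-RAP}, whose proof does not depend on the corollary, so there is no circularity). The only point worth stating explicitly is the one you implicitly use: in both alternatives of \cref{thm:ST-Ruzsa-3.10} the density of every level set over a point of the image exists (positive in the concentrated case, zero otherwise), so existence holds unconditionally.
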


\begin{Definition}[cf.\ {\cite[Definition 3.9]{Ruzsa77}}]
\label{def:concentrated}
A multiplicative function $\vec{f}\colon \N\to(\C\setminus\{0\})^r$ is called \define{concentrated} if it satisfies conditions \ref{itm_a}, \ref{itm_b} and \ref{itm_c} in part \ref{itm_p1} of \cref{thm:ST-Ruzsa-3.10}.
\end{Definition}

\begin{Corollary}
\label{cor:density->concentrated function}
Let $f\colon \N\to\C$ be a multiplicative function and $z\in\C\setminus\{0\}$. If $d(E(f,z))>0$ then there exists a concentrated multiplicative function $g\colon \N\to\C\setminus\{0\}$ such that
$$
E(f,z)=E(g,z).
$$
Moreover, the set $P:=\{p\in\P: f(p)\neq g(p)\}$ satisfies $\sum_{p\in P}\frac{1}{p}<\infty$.
\end{Corollary}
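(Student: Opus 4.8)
The plan is to reduce everything to Ruzsa's theorem by first removing the zeros of $f$: I would build an auxiliary multiplicative function $g\colon\N\to\C\setminus\{0\}$ with $E(g,z)=E(f,z)$, and then let \cref{thm:ST-Ruzsa-3.10} tell me that such a $g$ is automatically concentrated. To construct $g$, let $\mathbb{K}\subset\C$ be the subfield generated over $\Q$ by $z$ together with all the nonzero values $f(p^k)$, $p\in\P$, $k\geq 1$. Since $\mathbb{K}$ is generated by a countable set it is a countable field, so I may pick $c\in\C$ transcendental over $\mathbb{K}$ (the algebraic closure of $\mathbb{K}$ inside $\C$ is countable, whereas $\C$ is not). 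I would then define $g$ on prime powers by $g(p^k):=f(p^k)$ when $f(p^k)\neq 0$ and $g(p^k):=c$ when $f(p^k)=0$, extending multiplicatively; then $g$ is $\C\setminus\{0\}$-valued, and $g(p)\neq f(p)$ precisely when $f(p)=0$, so $P:=\{p\in\P:f(p)\neq g(p)\}=\{p\in\P:f(p)=0\}$.

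The next step is to check $E(g,z)=E(f,z)$. If $f(n)=z$ then, since $z\neq 0$, every factor $f(p^{v_p(n)})$ is nonzero and therefore equals $g(p^{v_p(n)})$, so $g(n)=f(n)=z$; this gives $E(f,z)\subseteq E(g,z)$. For the reverse inclusion, suppose $g(n)=z$ and set $J:=\{p\mid n:\ f(p^{v_p(n)})=0\}$. By the definition of $g$ one has $g(n)=c^{|J|}\,u$, where $u:=\prod_{p\mid n,\ p\notin J}f(p^{v_p(n)})$ is a nonzero element of $\mathbb{K}$. If $J$ were nonempty this would give $c^{|J|}=z/u\in\mathbb{K}$, contradicting that $c$, hence $c^{|J|}$, is transcendental over $\mathbb{K}$. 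So $J=\emptyset$, which forces $g(n)=f(n)=z$, i.e.\ $n\in E(f,z)$.

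Once $E(g,z)=E(f,z)$ is established, the conclusion is short. The set $E(g,z)$ has positive density, so in particular $z\in im(g)$ and $d(E(g,z))>0$; since $g$ is $\C\setminus\{0\}$-valued, \cref{thm:ST-Ruzsa-3.10} applies, and (as part (2) of that theorem would otherwise give $d(E(g,z))=0$) $g$ must satisfy conditions (a), (b), (c) of part (1), i.e.\ $g$ is concentrated; in particular its concentration group $\G$ is finite. It remains to bound $\sum_{p\in P}\frac1p$. On every $p\in P$ we have $g(p)=c$, so if $\sum_{p\in P}\frac1p$ diverged then $c$ would be a concentration point of $g$ and hence $c\in\G$; but $c$ is transcendental, hence of infinite multiplicative order, so $\G$ would be infinite, a contradiction. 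Therefore $\sum_{p\in P}\frac1p<\infty$, which completes the proof.

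The only genuinely non-routine ingredient is the middle step: altering $f$ on the prime powers where it vanishes must not create new solutions of $g(n)=z$, and this is exactly what transcendence of $c$ over the countable field $\mathbb{K}$ of ``relevant'' values guarantees. One could instead first prove $\sum_{p:\,f(p)=0}\frac1p<\infty$ by a direct sieve estimate based on the containment $E(f,z)\subseteq\{n:\ v_p(n)\neq 1\text{ for every prime }p\text{ with }f(p)=0\}$, but even then a generic value is still needed to handle the zeros $f(p^k)=0$ with $k\geq 2$ --- over which $\sum\frac1p$ need not converge --- so the transcendence device seems unavoidable.
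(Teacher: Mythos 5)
Your proof is correct and follows essentially the same strategy as the paper: replace the zero values $f(p^k)=0$ by a single ``generic'' nonzero constant (you use an element transcendental over the countable field generated by the relevant values, the paper uses $y$ with $(y^n\cdot im(f)\setminus\{0\})\cap(im(f)\setminus\{0\})=\emptyset$ --- the same device), check that level sets over nonzero $z$ are unchanged, and invoke \cref{thm:ST-Ruzsa-3.10}. The only divergence is the last step: the paper gets $\sum_{p\in P}\frac1p<\infty$ from $\|f\|_B>0$ via \cref{lem:lH2-C}, whereas you deduce it from the finiteness of the concentration group of $g$; both arguments are valid.
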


\begin{proof}
Define $J:=im(f)\setminus\{0\}$.
Since $J$ is a countable subset of $\C\setminus\{0\}$, there exists $y\in \C\setminus\{0\}$ such that $(y^n\cdot J)\cap J=\emptyset$ for all $n\in\N$. We define a new multiplicative function $g$ as
$$
g(p^k):=
\begin{cases}
f(p^k),&\text{if }f(p^k)\neq 0
\\
y,&\text{if }f(p^k)=0
\end{cases},
\qquad\forall k\in\N,~\forall p\in\P.
$$
It follows from $(y^n\cdot J)\cap J=\emptyset$ that $E(f,z')=E(g,z')$ for all $z'\in J$, so, in particular $E(f,z)=E(g,z)$.
Since $g(n)\neq 0$ for all $n\in\N$, we can apply \cref{thm:ST-Ruzsa-3.10} and deduce that $g$ must satisfy conditions \ref{itm_a}, \ref{itm_b} and \ref{itm_c} in part \ref{itm_p1} of \cref{thm:ST-Ruzsa-3.10}.

Note that $\|f\|_B>0$, because $z\neq 0$ and $d(E(f,z))>0$. Therefore, by \cref{lem:lH2-C}, we conclude that $P'=\{p\in\P:f(p)=0\}$ satisfies $\sum_{p\in P'}\frac{1}{p}<\infty$.
Finally, note that $f(p)\neq g(p)$ if and only if $p\in P'$, which completes the proof.
\end{proof}

\begin{Remark}\label{vector}
Consider $\vec{f}\colon\N\to(\C\setminus\{0\})^r$. Notice that, by Theorem~\ref{thm:ST-Ruzsa-3.10}, if $\vec{f}$ is concentrated then $d(E(\vec{f},\vec{1}))>0$, because $\vec{1}\in im(\vec{f})$. Moreover, $\vec{f}$ is concentrated if and only if each of its coordinates $f_i$ is concentrated.
\end{Remark}

\subsection{Uniform functions}
\label{sec:prelim-2}



It follows from the work of Green and Tao \cite{GT12-2} and Green, Tao and Ziegler \cite{GTZ12} that the classical M{\"o}bius function $\mob$ is a uniform function (see \cref{def:GN}). A more general result was
obtained by Frantzikinakis and Host in \cite{FH17}. In order to state their theorem, we need the following definition.

\begin{Definition}\label{def:aperiodic}
We call a multiplicative function $f$ \define{aperiodic}
if for all $b\in\N$ and all $r\in\{0,1,\ldots,b-1\}$ we have
$
\lim_{N\to\infty}\frac{1}{N}\sum_{n=1}^N f(bn+r)=0.
$
\end{Definition}

\begin{Theorem}[Theorem 2.4, \cite{FH17}]
\label{thm:tAU}
A multiplicative function $f\in \cm$ is uniform if and only if it is aperiodic.
\end{Theorem}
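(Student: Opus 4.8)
The plan is to establish the two directions of the equivalence separately. The implication \textbf{uniform $\Rightarrow$ aperiodic} is elementary and uses only the $U^2$ theory, while the converse \textbf{aperiodic $\Rightarrow$ uniform} is the substantial half and rests on higher-order Fourier analysis together with mean-value theorems for multiplicative functions.

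\textbf{The easy direction.} Suppose $f\in\cm$ is not aperiodic, so that $\limsup_{N\to\infty}\big|\frac1N\sum_{n\le N} f(bn+r)\big|>0$ for some $b\in\N$ and $r\in\{0,\dots,b-1\}$. Writing $\1_{\{n\equiv r\,(\mathrm{mod}\,b)\}}=\frac1b\sum_{j=0}^{b-1}e\big(j(n-r)/b\big)$ and pigeonholing, one obtains a rational $\alpha$ with $\limsup_{N}\big|\frac1N\sum_{n\le N} f(n)e(n\alpha)\big|>0$. A direct computation with the Fourier expansion on $\Z/\tilde N\Z$ (bounding $\sum_\xi|\widehat{f_N}(\xi)|^4$ from below by a single term and using $|f|\le1$) yields $\|f\|_{U^2_{[N]}}^4\gg\sup_\xi\big|\frac1N\sum_{n\le N} f(n)e(n\xi)\big|^4$, so $f$ is not $U^2$-uniform, hence not uniform. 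Contrapositively, a uniform multiplicative function is aperiodic.

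\textbf{The hard direction}, by contraposition. Assume $f$ is not uniform: $\|f\|_{U^{s+1}_{[N]}}\ge\delta$ for some $s\ge1$, some $\delta>0$, and all $N$ in an infinite set. By the inverse theorem for the Gowers norms of Green--Tao--Ziegler \cite{GTZ12}, for each such $N$ there is an $s$-step nilsequence $\psi_N$ of complexity bounded in terms of $s$ and $\delta$ only, with $\big|\frac1N\sum_{n\le N} f(n)\overline{\psi_N(n)}\big|\ge c(s,\delta)>0$. Now invoke the structural dichotomy for correlations of multiplicative functions with nilsequences — the extension, due to Frantzikinakis and Host \cite{FH17}, of Green and Tao's orthogonality of $\mob$ with nilsequences \cite{GT12-2} — which asserts that an $f\in\cm$ correlating with nilsequences of bounded complexity along an infinite sequence of scales must \emph{pretend} to be a twisted Dirichlet character, i.e.\ there exist a Dirichlet character $\chi$ and $t\in\R$ with $\mathbb{D}(f,\chi\cdot n^{it})<\infty$. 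But $\chi\cdot n^{it}$ is not aperiodic (take $b$ a multiple of the modulus of $\chi$ and $r$ coprime to it; then $\frac1N\sum_{n\le N}\chi(bn+r)(bn+r)^{it}$ has modulus bounded away from $0$), so by the fact that $\mathbb{D}$-closeness preserves aperiodicity (\cref{rem:uniform-aperiodic-D}) the function $f$ is not aperiodic either. This contradiction proves the claim.

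\textbf{The main obstacle} is precisely the structural dichotomy used above. Green and Tao established orthogonality with nilsequences only for $\mob$ (equivalently $\lio$); extending it to \emph{all} aperiodic $f\in\cm$ is the technical heart of \cite{FH17} and combines three ingredients: (i) the quantitative equidistribution theory for polynomial orbits on nilmanifolds — the factorization of the underlying orbit into a smooth, a rational, and an equidistributed part — which reduces a general nilsequence correlation on $[N]$ to correlations with genuinely linear phases $e(n\beta)$ and periodic functions on suitable sub-progressions; (ii) Halász-type mean-value theorems (cf.\ \cref{thm:FH-thm.2.9}) and their twisted analogues, together with \cref{DirArch}, which force the Archimedean character $n^{it}$ and a Dirichlet character to appear once $\frac1N\sum_{n\le N}f(n)e(n\beta)$ fails to vanish; and (iii) a concatenation/pigeonholing argument over scales to cope with the dependence of $\psi_N$ on $N$ and to upgrade single-scale correlation to the $\mathbb{D}$-finite conclusion. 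A useful warm-up is the case $s=1$: there the inverse theorem is the elementary $U^2$ fact above, and the structural step reduces to Daboussi's theorem (\cref{thm:DD82-thm.1.}) together with uniform-in-frequency mean-value estimates, directly giving that a $U^2$-non-uniform $f\in\cm$ pretends to be some $\chi\cdot n^{it}$.
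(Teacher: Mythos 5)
The paper does not prove this statement at all: it is imported verbatim as Theorem 2.4 of Frantzikinakis--Host \cite{FH17}, so there is no internal argument to compare yours against. Judged on its own terms, your write-up is a faithful road map of the proof in \cite{FH17} rather than a proof. The easy direction is essentially complete; the only loose end is that the rational frequency $j/b$ you extract by pigeonholing need not be of the form $\xi/\tilde{N}$, so instead of isolating a single term of $\sum_\xi|\widehat{f_N}(\xi)|^4$ you should invoke the standard inequality $\sup_{\alpha\in\T}\bigl|\frac1N\sum_{n\le N}f(n)e(n\alpha)\bigr|\ll\|f\|_{U^2_{[N]}}$ for arbitrary $\alpha$ (cf.\ the appendices of \cite{FH17}). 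The logic of the hard direction is also sound: the Green--Tao--Ziegler inverse theorem, then the correlation statement, then either \cref{prop:l10} directly or your detour through \cref{rem:uniform-aperiodic-D} (note that only the ``aperiodic'' half of that remark may be used here, since its ``uniform'' half is deduced from the very theorem you are proving; your argument does avoid this circularity).

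The genuine gap is the one you flag yourself: the assertion that an $f\in\cm$ correlating with nilsequences of bounded complexity at infinitely many scales must satisfy $\mathbb{D}(f,\chi\cdot n^{it})<\infty$ for some Dirichlet character $\chi$ and some $t\in\R$ is not a black box one may cite and move past --- it is the entire content of the theorem, and establishing it occupies most of \cite{FH17} (their structure theorem decomposing $f$ on $\Z/\tilde{N}\Z$ into structured, uniform and error parts, built from a K\'atai-type orthogonality criterion for nilsequences together with the Green--Tao quantitative equidistribution and factorization theory). Your list of ingredients (i)--(iii) is an accurate description of that machinery, but none of it is carried out, so as a standalone argument the hard direction reduces \cref{thm:tAU} to a strictly deeper theorem of the same source. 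Since the present paper likewise treats the whole statement as external, this level of detail is acceptable for the paper's purposes, but it should be presented as a citation with commentary, not as a proof.
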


In \cite{Delange72, Delange83}, Delange gives a full characterization of all aperiodic functions in $\cm$:


\begin{Proposition} \label{prop:l10} Let $f\in\cm$. Then $f$ is aperiodic if and only if $\mathbb{D}(f,\chi\cdot n^{it})=\infty$ for each Dirichlet character $\chi$ and $t\in\R$.
\end{Proposition}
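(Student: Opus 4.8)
The plan is to convert aperiodicity of $f$ into a statement about mean values of Dirichlet twists of $f$ and then run that through Hal\'asz's theorem (\cref{thm:FH-thm.2.9}); the distance $\mathbb{D}$ only enters through two elementary observations. First, for any $g\in\cm$ one has $\mathbb{D}(g,1)^2=\Re\sum_{p}\tfrac1p\bigl(1-g(p)\bigr)$, so convergence of the Hal\'asz series $\sum_p\tfrac1p(1-g(p))$ forces $\mathbb{D}(g,1)<\infty$. Second, for a Dirichlet character $\psi$ a term-by-term comparison of the defining series gives $\mathbb{D}(f\psi,1)=\mathbb{D}(f,\overline\psi)$ and, more generally, $\mathbb{D}(f\psi\, n^{-it},1)=\mathbb{D}(f,\overline\psi\, n^{it})$ for every $t\in\R$ (the two sides agree at each prime, with the primes dividing the modulus of $\psi$ contributing the term $\tfrac1p$ on both sides).

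The reduction step is the elementary equivalence: $f$ is aperiodic if and only if $M(f\psi)$ exists and equals $0$ for every Dirichlet character $\psi$. For ``$\Rightarrow$'' I would expand, for $\psi$ of modulus $q$, $\sum_{m\le M}f(m)\psi(m)=\sum_{\gcd(a,q)=1}\psi(a)\sum_{m\le M,\ m\equiv a\,(q)}f(m)$ and note each inner sum is $o(M)$ by aperiodicity. For ``$\Leftarrow$'' I would fix $b$ and $0\le r<b$ and decompose each $m\equiv r\pmod b$ as $m=r_1m'$, where $r_1$ is the $b$-part of $m$ (ranging over the set $\{\prod_{p\mid b}p^{e_p}\}$, constrained modulo $b$ by $r$) and $\gcd(m',b)=1$; since $f(m)=f(r_1)f(m')$ this reduces $\sum_{m\le M,\ m\equiv r(b)}f(m)$ to an absolutely summable combination (over $r_1$, with $\sum_{r_1}r_1^{-1}<\infty$) of sums $\sum_{m'\le M/r_1}f(m')\psi'(m')$ over Dirichlet characters $\psi'$ of moduli dividing $b$, so that $M(f\psi')=0$ for all $\psi'$ forces the whole expression to be $o(M)$ by dominated convergence.

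For the ``if'' direction, suppose $\mathbb{D}(f,\chi n^{it})=\infty$ for every Dirichlet character $\chi$ and every $t\in\R$, and let $\psi$ be an arbitrary Dirichlet character. If $M(f\psi)\neq0$, then, since \ref{item:E79-thm6.3-i} is the only case of \cref{thm:FH-thm.2.9} with nonzero mean, the series $\sum_p\tfrac1p(1-(f\psi)(p))$ converges; by the first observation $\mathbb{D}(f\psi,1)<\infty$, hence $\mathbb{D}(f,\overline\psi)<\infty$, contradicting the hypothesis with $\chi=\overline\psi$ and $t=0$. Therefore $M(f\psi)=0$ for every Dirichlet character $\psi$, and $f$ is aperiodic by the reduction step.

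For the ``only if'' direction I would argue the contrapositive: assume $\mathbb{D}(f,\chi n^{it})<\infty$ for some Dirichlet character $\chi$ of modulus $q$ and some $t\in\R$, and exhibit a Dirichlet character $\psi$ for which $M(f\psi)$ is not an existing zero (so $f$ is not aperiodic by the easy half of the reduction). Take $\psi:=\overline\chi\cdot\chi_1$ with $\chi_1$ the principal character modulo $\mathrm{lcm}(2,q)$ — by \eqref{eq:induced-character} this is a Dirichlet character, and $\psi(2^k)=0$ for all $k$ — and set $g:=f\psi\, n^{-it}\in\cm$; by the second observation $\mathbb{D}(g,1)<\infty$, since it differs from $\mathbb{D}(f,\chi n^{it})$ only at primes dividing $2q$. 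If $M(g)$ exists, then in \cref{thm:FH-thm.2.9} case \ref{item:E79-thm6.3-iv} is excluded (because $\mathbb{D}(g,1)<\infty$) and case \ref{item:E79-thm6.3-iii} is excluded (because $g(2^k)=0$ for all $k$), so case \ref{item:E79-thm6.3-i} holds and $M(g)\neq0$; a partial summation using $(f\psi)(m)=g(m)m^{it}$ then yields $\sum_{m\le M}f(m)\psi(m)=\tfrac{M(g)}{1+it}M^{1+it}+o(M)$, so $\tfrac1M\sum_{m\le M}f(m)\psi(m)$ converges to $M(g)\neq0$ when $t=0$ and has no limit when $t\neq0$. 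If instead $M(g)$ fails to exist, then $M(f\psi)\neq0$ as well, since $M(f\psi)=0$ would force $M(g)=0$ by another partial summation. The hard part will be the bookkeeping in the ``$\Leftarrow$'' half of the reduction step (extracting the $b$-part of $m$ and taming the resulting infinite sum) together with the need, in the contrapositive, to neutralize the prime $2$ via the twist by $\chi_1$ so as to dodge the exceptional case \ref{item:E79-thm6.3-iii} of Hal\'asz's theorem; once that is set up, the partial-summation transfer of non-convergence from $g$ back to $f\psi$ is routine.
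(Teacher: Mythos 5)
The paper does not actually prove \cref{prop:l10}: it is quoted from Delange \cite{Delange72,Delange83}, so there is no internal proof to compare against. Your argument is the standard derivation of Delange's characterization from Hal\'asz's theorem, and it is consonant with the toolkit the paper deploys elsewhere: your reduction of aperiodicity to the vanishing of all Dirichlet twists $M(f\psi)$ parallels \cref{lem:mconv-characterization} (proved in the Appendix), and your device of twisting by the principal character modulo $\mathrm{lcm}(2,q)$ so that $g(2^k)=0$, thereby excluding case \ref{item:E79-thm6.3-iii} of \cref{thm:FH-thm.2.9}, is exactly the move made in \cref{rem:c-iii-prime} and \cref{lem:mconv-orth-archimedean}. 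The identities $\mathbb{D}(f\psi\,n^{-it},1)=\mathbb{D}(f,\overline{\psi}\,n^{it})$, the exclusion of cases \ref{item:E79-thm6.3-iii} and \ref{item:E79-thm6.3-iv} in the converse, and the partial-summation transfer are all sound, and the extraction of the $b$-part of $m$ in the reduction step goes through thanks to the domination $\sum_{r_1}r_1^{-1}<\infty$.

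There is one genuine, though easily repaired, gap in the ``if'' direction. Aperiodicity requires the limits $M(f\psi)$ to \emph{exist} and vanish, but you only rule out the alternative ``$M(f\psi)$ exists and is nonzero'': Hal\'asz's theorem hands you the convergent series, hence $\mathbb{D}(f,\overline{\psi})<\infty$, only under the premise that the mean exists, so the case where $M(f\psi)$ fails to exist is never addressed and the sentence ``Therefore $M(f\psi)=0$ for every Dirichlet character $\psi$'' does not follow from what precedes it. The fix is immediate and actually shortens the argument: the hypothesis with $\chi=\overline{\psi}$ gives $\mathbb{D}(f\psi,n^{it})=\mathbb{D}(f,\overline{\psi}\,n^{it})=\infty$ for every $t\in\R$, which is precisely condition \ref{item:E79-thm6.3-iv} of \cref{thm:FH-thm.2.9}, so $M(f\psi)$ exists and equals $0$ outright, with no contradiction argument needed. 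A cosmetic instance of the same slip occurs in Case B of the converse, where the conclusion should read ``$M(f\psi)$ either fails to exist or is nonzero'' rather than ``$M(f\psi)\neq 0$''; that weaker statement is all you use, so nothing breaks there.
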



\begin{Remark}\label{rem:uniform-aperiodic-D}
It follows immediately from \cref{prop:l10} and from the triangle inequality for $\mathbb{D}$ (see \cref{rem:basic-properties-of-D}) that if $f,g\in\cm$
satisfy $\mathbb{D}(f,g)<\infty$ then $f$ is aperiodic if and only if $g$
is aperiodic. Using \cref{thm:tAU} we can replace ``aperiodic'' with ``uniform''.
Hence, we get that if $f,g\in\cm$ satisfy $\mathbb{D}(f,g)<\infty$ then
$f$ is uniform if and only if $g$ is uniform.
\end{Remark}

\begin{Proposition}\label{prop:interplay-uniform-almost-periodic}
Suppose $f\colon \N\rightarrow \C$ is bounded.
\begin{enumerate}
[label=\text{(\alph*)}, ref=\text{(\alph*)}, leftmargin=*]
\item\label{itm:prop:interplay-uniform-almost-periodic-a}
If $f_n$ is uniform and $f_n\to f$ in $\|\cdot\|_B$, then
$f$ is uniform.
\item\label{itm:prop:interplay-uniform-almost-periodic-b0}
If $f$ is uniform, $q\in \N$ and $r\in\{0,1,\ldots,q-1\}$ then $f\cdot\1_{q\N+r}$
is uniform.
\item\label{itm:prop:interplay-uniform-almost-periodic-b}
If $f$ is uniform and if $g$ is Besicovitch rationally almost periodic,
then 
$h:=f\cdot g$
is uniform.
\item\label{itm:prop:interplay-uniform-almost-periodic-c}
If $f$ is uniform and $t\in\N$ then $h(n):=f(tn)$ is uniform.
\end{enumerate}
\end{Proposition}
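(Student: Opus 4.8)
The plan is to reduce all four assertions to three elementary facts about the seminorms $\|\cdot\|_{U^s_{[N]}}$, each obtained by unwinding \cref{def:GN}. The first is an $L^1$-bound: for every $s\ge 2$ there is a constant $C_s$ with $\|g\|_{U^s_{[N]}}\le C_s\|g\|_\infty^{1-2^{1-s}}\big(\tfrac1N\sum_{n\le N}|g(n)|\big)^{2^{1-s}}$ for all bounded $g\colon\N\to\C$ and all large $N$; this is proved first on $\Z/M\Z$ for $1$-bounded $h$ (Parseval gives $\|h\|_{U^2_{\Z/M\Z}}^4=\sum_\xi|\hat h(\xi)|^4\le\|\hat h\|_\infty^2\|h\|_{L^2}^2\le\|h\|_{L^1}^2$, and for $s\ge3$ the identity $\|h\|_{U^s}^{2^s}=\E_{t_3,\dots,t_s}\|\Delta_{t_3}\cdots\Delta_{t_s}h\|_{U^2}^4$ together with $|\Delta_{t_3}\cdots\Delta_{t_s}h|\le|h|$ gives $\|h\|_{U^s}^{2^s}\le\|h\|_{L^1}^2$), and then transferred to $[N]$ by rescaling and dividing by $\|\1_{[N]}\|_{U^s_{\Z/2^sN\Z}}$, which stays bounded below by a positive constant. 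The second fact is that linear phase modulations are $U^s$-isometries for $s\ge 2$, i.e.\ $\|f\cdot e(\,\cdot\,\theta)\|_{U^s_{[N]}}=\|f\|_{U^s_{[N]}}$ for every $\theta\in\R$: using symmetric representatives $|t_i|<2^sN/2$ for the shift variables $t_1,\dots,t_s$, and the fact that $\widetilde N=2^sN$ leaves enough room, one checks that any parallelepiped $(n;t_1,\dots,t_s)$ contributing a nonzero term to $\|f_N\cdot e(\,\cdot\,\theta)\|_{U^s_{\Z/2^sN\Z}}^{2^s}$ has no wrap-around, i.e.\ every vertex $n+\omega\cdot t$ ($\omega\in\{0,1\}^s$, $\omega\cdot t=\omega_1t_1+\dots+\omega_st_s$) lies in $\{1,\dots,N\}$ as an integer, and then the accompanying phase equals $e\big(\theta\sum_{\omega\in\{0,1\}^s}(-1)^{|\omega|}(n+\omega\cdot t)\big)=1$ since $\sum_{\omega}(-1)^{|\omega|}(n+\omega\cdot t)=0$ for $s\ge2$. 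The third fact is a dilation–restriction identity: for $t\in\N$, $s\ge2$ and $g_t(n):=f(tn)$ one has $\|f\cdot\1_{t\N}\|_{U^s_{[tK]}}^{2^s}=(t^{-(s+1)}+o(1))\,\|g_t\|_{U^s_{[K]}}^{2^s}$ as $K\to\infty$, obtained via the substitution $n=tn'$, $t_i=tt_i'$ in \cref{def:GN} (a nonzero parallelepiped for $f\cdot\1_{t\N}$ forces $t\mid n$ and $t\mid t_i$ for all $i$, and the rescaled data then ranges exactly over all parallelepipeds of $g_t$), the $o(1)$ absorbing the convergent ratio of the normalizing constants $\|\1_{[K]}\|_{U^s_{\Z/2^sK\Z}}$ and $\|\1_{[tK]}\|_{U^s_{\Z/2^stK\Z}}$.

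Granting these, the four parts follow quickly. For part~(a): given $\epsilon>0$, fix $n$ with $\|f-f_n\|_B<\epsilon$ (we assume, as in all our applications, that the $f_n$ are uniformly bounded); the first fact gives $\limsup_N\|f-f_n\|_{U^s_{[N]}}\le C_s(\|f\|_\infty+\|f_n\|_\infty)^{1-2^{1-s}}\epsilon^{2^{1-s}}$, so by the triangle inequality for $\|\cdot\|_{U^s_{[N]}}$ together with $\|f_n\|_{U^s_{[N]}}\to0$ we get $\limsup_N\|f\|_{U^s_{[N]}}\le C_s(\cdots)\epsilon^{2^{1-s}}$, and letting $\epsilon\to0$ shows $f$ is $U^s$-uniform for each $s\ge2$; since the $U^s_{[N]}$-seminorms are increasing in $s$ up to constants (see the references in the footnote to \cref{def:GN}), $f$ is uniform. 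For part~(b): from $\1_{q\N+r}(n)=\tfrac1q\sum_{j=0}^{q-1}e(-jr/q)\,e(nj/q)$ we get $f\cdot\1_{q\N+r}=\tfrac1q\sum_{j=0}^{q-1}e(-jr/q)\,f\cdot e(\,\cdot\,j/q)$, so by the isometry fact and the triangle inequality $\|f\cdot\1_{q\N+r}\|_{U^s_{[N]}}\le\|f\|_{U^s_{[N]}}\to0$ for $s\ge2$, and $f\cdot\1_{q\N+r}$ is uniform. For part~(c): by \cref{def:d3} there are periodic functions $P_k$, which we may take uniformly bounded since $g$ is bounded, with $\|g-P_k\|_B\to0$; each $P_k$ is a trigonometric polynomial, hence $f\cdot P_k$ is a finite linear combination of functions $f\cdot e(\,\cdot\,\theta)$ and so is uniform by the isometry fact, while $\|f\cdot g-f\cdot P_k\|_B\le\|f\|_\infty\|g-P_k\|_B\to0$; part~(a) now gives that $f\cdot g$ is uniform. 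Finally, for part~(d): by part~(b) with $r=0$ the function $f\cdot\1_{t\N}$ is uniform, so the dilation–restriction identity yields $\|g_t\|_{U^s_{[K]}}^{2^s}=(t^{s+1}+o(1))\,\|f\cdot\1_{t\N}\|_{U^s_{[tK]}}^{2^s}\to0$ as $K\to\infty$ for every $s\ge2$; hence $g_t$, i.e.\ $n\mapsto f(tn)$, is uniform.

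The conceptual content is modest, and the one genuinely delicate point is the combinatorial bookkeeping behind the isometry fact and the dilation–restriction identity: one must verify that the zero-padding built into the definition of $\|\cdot\|_{U^s_{[N]}}$ does not create wrap-around contributions to the Gowers sums — which is precisely the reason the definition passes to $\Z/2^sN\Z$ rather than $\Z/N\Z$. These verifications are routine once symmetric representatives are chosen, and the two facts could alternatively be quoted (cf.\ the appendices of \cite{FH17} and \cite{GT10}); I would nevertheless include the short arguments for completeness.
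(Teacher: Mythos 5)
Your proposal is correct, and for parts (a), (c) and (d) it follows essentially the paper's route: the paper likewise proves (a) from an $L^1$-bound on $\|\cdot\|_{U^s_{[N]}}$ (obtained there by a direct induction on the recursive definition, $\|f\|_{U^{s+1}_{[N]}}^{2^{s+1}}\leq\frac1N\sum_n|f_N(n)|$, rather than via Parseval at the $U^2$ level — both are standard and both, like yours, tacitly assume the approximating sequence is uniformly bounded), deduces (c) from (b) by approximating $g$ in $\|\cdot\|_B$ by linear combinations of $\1_{q\N+r}$ and invoking (a), and proves (d) from (b) via the same dilation inequality $\|f(tn)\|_{U^s_{[N]}}^{2^s}\leq t^{s+1}\|f\cdot\1_{t\N}\|_{U^s_{[tN]}}^{2^s}+\oh(1)$ that your "dilation--restriction identity" makes precise. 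The one genuine divergence is part (b): the paper disposes of it in a single line by citing the inverse conjecture for the Gowers norms \cite{GTZ12}, whereas you expand $\1_{q\N+r}$ into additive characters and use the modulation invariance $\|f\cdot e(\,\cdot\,\theta)\|_{U^s_{[N]}}=\|f\|_{U^s_{[N]}}$ for $s\geq 2$, verified by the no-wrap-around bookkeeping in $\Z/2^sN\Z$. Your argument is more elementary and self-contained (it avoids a deep black box and, as a bonus, makes part (c) follow directly from the same phase-invariance without passing through (b)), at the cost of the combinatorial verification you correctly identify as the only delicate point; that verification is sound as written (with $\tilde N=2^sN$ one has $2^s\geq s+1$, so the symmetric representatives force every vertex of a contributing parallelepiped to be a genuine integer in $[N]$, and the alternating sum of the vertices vanishes for $s\geq2$).
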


\begin{proof}
To prove part \ref{itm:prop:interplay-uniform-almost-periodic-a} it suffices to show that for all $f\colon \N\to\C$ bounded in modulus by $1$ we have
\begin{equation}
\label{eqn:U-B-ineq}
\|f_N \|_{U^s[N]}^{2^{s+1}}\leq  \frac{1}{N}\sum_{n=1}^N |f(n)|.
\end{equation}
We prove \eqref{eqn:U-B-ineq} by induction on $s$.
For $s=1$ the inequality in \eqref{eqn:U-B-ineq} follows immediately from the definition of 
the $U^1$-norm (see \cref{def:GN}). Thus, assume \eqref{eqn:U-B-ineq} has already been proven for $s\geq 1$. Then,
\begin{eqnarray*}
\|f\|_{U^{s+1}_{[N]}}^{2^{s+1}}
&=&\frac{1}{N}\sum_{h=1}^N \left\|f_N T^h \overline{f_N}\right\|_{U^s_{[N]}}^{2^s}\\
&\leq&\frac{1}{N}\sum_{h=1}^N \frac{1}{N}\sum_{n=1}^N\left |f_N(n) \overline{f_N(n+h)}\right|\\
&\leq& \frac{1}{N}\sum_{n=1}^N\left |f_N(n) \right|.
\end{eqnarray*}

Part \ref{itm:prop:interplay-uniform-almost-periodic-b0} follows directly from the inverse conjecture for the Gowers seminorms (see \cite{GTZ12} or \cite[Theorem 1.6.12 and Theorem 1.6.14]{Tao12}).

For the proof of part \ref{itm:prop:interplay-uniform-almost-periodic-b} observe that it follows from part \ref{itm:prop:interplay-uniform-almost-periodic-b0} and the triangle inequality for $\|\cdot\|_{U_{[N]}^s}$ that for any uniform $f$ and any $g$ that is a finite linear combination of functions of the form $\1_{q\N+r}$, $q\in \N$ and $r\in\{0,1,\ldots,q-1\}$, the product $f\cdot g$ is uniform. Since any Besicovitch rationally almost periodic function can be approximated in the $\|\cdot\|_B$-seminorm by finite linear combinations of functions of the form $\1_{q\N+r}$, it follows from part \ref{itm:prop:interplay-uniform-almost-periodic-a} that for any uniform $f$ and any Besicovitch rationally almost periodic $g$ the function $h=f\cdot g$ is uniform.

Finally, for part \ref{itm:prop:interplay-uniform-almost-periodic-c}, one can easily show by induction that
$$
\|f(tn)\|_{U^{s+1}_{[N]}}^{2^{s+1}}\leq t^{2^{s+1}}\|f\cdot\1_{t\N}\|_{U^{s+1}_{[N]}}^{2^{s+1}}+\oh(N)
$$
and hence the claim follows from part \ref{itm:prop:interplay-uniform-almost-periodic-b0}.
\end{proof}

\section{The dichotomy theorem for $\mconv$}
\label{sec:dt}

In this section we discuss some equivalent characterizations of $\mconv$ and give a proof of \cref{thm:dichotomy}.

\subsection{Equivalent characterizations of $\mconv$}
\label{sec:m_conv}

The following subclass of $\cm$ was introduced in \cref{sec:intro}:
$$
\mconv=\left\{f\in\cm: \lim_{N\to\infty}\frac{1}{N}\sum_{n=1}^N f(qn+r)~\text{exists for all $q,r\in\N$} \right\}.
$$
The next proposition offers an alternative characterizations of functions in $\mconv$. The result must be well-known to aficionados; we provide an elementary proof in the Appendix.

\begin{Proposition}
\label{lem:mconv-characterization}
Let $f\in\cm$. Then $f\in\mconv$ if and only if for all Dirichlet characters $\chi$ the mean value $M(\chi\cdot f)$ exists.
\end{Proposition}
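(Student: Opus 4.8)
The statement is an equivalence: $f \in \mconv$ iff $M(\chi \cdot f)$ exists for every Dirichlet character $\chi$. The plan is to exploit the fact that the indicator function $\1_{q\N+r}$ of an arithmetic progression can be written as a finite linear combination of Dirichlet characters (of modulus dividing $q$, suitably twisted), and conversely that each Dirichlet character of modulus $k$ is a linear combination of indicator functions of residue classes modulo $k$. Since $f \mapsto M(\cdot f)$ is linear in the sense that if the means of $g_1 f, \ldots, g_m f$ all exist then so does the mean of $(\sum c_i g_i) f$, both directions will reduce to this dictionary between residue-class indicators and Dirichlet characters.

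\textbf{Direction 1 ($\mconv \Rightarrow$ means against characters exist).} Suppose $f \in \mconv$, so $\lim_N \frac1N \sum_{n=1}^N f(qn+r)$ exists for all $q, r \in \N$; equivalently $M(f \cdot \1_{q\N+r})$ exists for every residue class. Fix a Dirichlet character $\chi$ of modulus $k$. Write $\chi(n) = \sum_{r=0}^{k-1} \chi(r) \1_{k\N + r}(n)$ (interpreting residues mod $k$, with $\chi(r) = 0$ when $\gcd(r,k) > 1$). Then $\chi \cdot f = \sum_{r=0}^{k-1} \chi(r)\, (f \cdot \1_{k\N+r})$, a finite linear combination of functions each of which has a mean. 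Hence $M(\chi \cdot f)$ exists. (One should check the harmless edge case of the residue $r = 0$ versus $r = k$ in matching $\1_{k\N+r}$ to the progression $k\N + k = k\N$, which is covered by the $q, r \in \N$ quantifier in the definition of $\mconv$ together with the trivial observation that shifting by a full period doesn't change the mean.)

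\textbf{Direction 2 (means against all characters exist $\Rightarrow \mconv$).} Conversely, assume $M(\chi \cdot f)$ exists for all Dirichlet characters $\chi$. Fix $q, r \in \N$ and let $q' = \gcd$-reduce if desired, but more simply: work modulo $q$. We want $M(f \cdot \1_{q\N + r})$. The subtlety is that the indicator $\1_{q\N+r}$ of a single residue class $r \bmod q$ is \emph{not} in general a combination of Dirichlet characters of modulus $q$ alone, because Dirichlet characters vanish on residues not coprime to $q$. To handle residues $r$ with $d := \gcd(r,q) > 1$, write $n = q\N + r$ as $n = d m$ with $m \equiv r/d \pmod{q/d}$ and $\gcd(r/d, q/d) = 1$; thus $f(n) = f(dm)$ restricted to an arithmetic progression of modulus $q/d$ whose residue \emph{is} coprime to $q/d$. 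On that coprime residue class modulo $q/d$ one expands $\1_{(q/d)\N + (r/d)}$ as $\frac{1}{\varphi(q/d)}\sum_{\chi \bmod q/d} \overline{\chi(r/d)}\, \chi$ via orthogonality of Dirichlet characters. This reduces the problem to showing that $M\big( (f \circ (d\,\cdot)) \cdot \chi \big)$ exists, where $f\circ(d\,\cdot)$ denotes $n \mapsto f(dn)$. Since $f$ is multiplicative, $f(dn)$ relates to $f(n)$ via a bounded multiplicative correction on the primes dividing $d$; more carefully, writing $d = \prod p_i^{a_i}$, one has for $n$ with $p_i$-adic valuation $v_i$ that $f(dn) = \prod_i f(p_i^{a_i + v_i}) \cdot f(n')$ where $n'$ is the $d$-free part — this does not factor cleanly, so instead I would argue directly: the mean $M((f \circ (d\cdot))\cdot \chi)$ can be rewritten, by splitting $n$ according to the $p_i$-adic valuations (a standard device, since for each fixed valuation vector the set of such $n$ is an arithmetic progression times a multiplicative restriction), as a convergent series of terms each of which is a constant times $M(f \cdot \chi')$ for suitable Dirichlet characters $\chi'$ obtained from $\chi$ by the induction \eqref{eq:induced-character}. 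This last point — controlling the contribution of the primes dividing $d$ and reducing $M(f(dn)\chi(n))$ to a (dominated-convergent) sum of $M(f \cdot \chi')$'s — is the technical heart of the argument.

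\textbf{Main obstacle.} The genuine difficulty, and the reason the authors relegate the proof to the Appendix, is exactly the reduction in Direction 2 for residues $r$ not coprime to $q$: one must untangle how the multiplicative structure of $f$ interacts with dilation $n \mapsto dn$ and show that $M(f(dn)\chi(n))$ exists given only that $M(f \cdot \chi')$ exists for all characters $\chi'$. The cleanest route is: decompose $\N$ into the classes $\{n : p^{a_p} \| n \text{ for } p \mid d\}$, note each such class is a finite union of arithmetic progressions, observe $f$ on each class equals a constant times $f$ evaluated on the coprime-to-$d$ part (which is again multiplicative and obtained from $f$ by zeroing out prime powers of primes dividing $d$ — itself expressible via characters of modulus divisible by $d$), and then sum using $\sum_{p \mid d} \sum_{a} p^{-a} < \infty$ together with $|f| \le 1$ to justify interchanging limit and sum. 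Direction 1 is routine by comparison.
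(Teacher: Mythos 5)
Your proof is correct in both directions, but the hard direction is organized quite differently from the paper's. The paper also begins by expanding $\1_{q\N+r}$ over Dirichlet characters when $\gcd(q,r)=1$, but for $d=\gcd(q,r)>1$ it proceeds by induction on $d$: peeling off one prime power at a time, it shows that the set of accumulation points of the partial averages $\frac1N\sum_{n\le N}f(qn+r)$ either collapses to a point outright or has diameter at most $\frac1p$ times the corresponding diameter for a new pair $(Q',R')$ with $\gcd(Q',R')=dp^{k+1}$; iterating gives geometric decay of the oscillation and hence convergence, with no infinite sums to interchange. You instead handle all primes dividing $d$ simultaneously: after the substitution $n=dm$ and the orthogonality expansion mod $q/d$, you decompose $\N$ by the full vector of $p$-adic valuations at $p\mid d$, identify each piece's contribution as a constant times $M(f\cdot\chi')$ for a genuine Dirichlet character $\chi'$ (namely $\chi$ twisted by the principal character mod $\mathrm{rad}(d)$), and justify the interchange of $\lim_N$ with the sum over valuation vectors by the uniform bound $\frac1N\bigl|\sum_{n\le N,\,n\in A_{\vec v}}\bigr|\le 1/P_{\vec v}$ and $\sum_{\vec v}1/P_{\vec v}=\prod_{p\mid d}(1-1/p)^{-1}<\infty$. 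Both routes rest on the same underlying fact that integers highly divisible by primes of $d$ are sparse; yours buys a more direct, closed-form expression for $M(f\cdot\1_{q\N+r})$ as an explicit convergent series of character means, at the price of a dominated-convergence interchange, while the paper's avoids that interchange entirely by its contraction-of-diameter bookkeeping. Your Direction 1 is identical to the paper's.
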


\begin{Remark}\label{rem:c-iii-prime}
Let $f\in\mconv$ and let $\chi$ be a Dirichlet character.
We claim the series $\sum_{p\in\P}\frac{1}{p}(1-f(p)\overline{\chi(p)})$ converges if and only if $\mathbb{D}(f,\chi)<\infty$. To prove this claim it suffices to prove that $\mathbb{D}(f,\chi)<\infty$ implies $\sum_{p\in\P}\frac{1}{p}(1-f(p)\overline{\chi(p)})$ converges, as the other direction is obvious.
Let $q$ denote a modulus of $\chi$. If $q$ is even, then we set $\chi':=\chi$ and if $q$ is odd then we set $\chi':=\chi\cdot \chi_1$, where $\chi_1$ denotes the principal character of modulus $2q$ (cf.\ \eqref{eq:induced-character}).
Since $f\in\mconv$, by \cref{lem:mconv-characterization}, the function $f\cdot\overline{\chi'}$ has a mean.
Therefore $f\cdot\overline{\chi'}$ satisfies either \ref{item:E79-thm6.3-i}, \ref{item:E79-thm6.3-iii} or \ref{item:E79-thm6.3-iv} of \cref{thm:FH-thm.2.9}. However, $f\cdot\overline{\chi'}$ cannot satisfy \ref{item:E79-thm6.3-iii} because $\chi'$ has even modulus and hence $\chi'(2^k)=0$ for all $k$. Also,
$\chi'(p)=\chi(p)$ for all but finitely many primes $p$ and therefore $\mathbb{D}(f,\chi)<\infty$ implies $\mathbb{D}(f,\chi')<\infty$. This implies that
 $f\cdot\overline{\chi'}$ cannot satisfy \ref{item:E79-thm6.3-iv}, because $\mathbb{D}(f\cdot\overline{\chi'},1)=\mathbb{D}(f,\chi')<\infty$.
Therefore $f\cdot\overline{\chi'}$ must satisfy \ref{item:E79-thm6.3-i} of \cref{thm:FH-thm.2.9}, from which it follows that $\sum_{p\in\P}\frac{1}{p}(1-f(p)\overline{\chi(p)})$ converges.
\end{Remark}

Using the above observation we can now replace condition \ref{item:DD82-thm1-and-thm6-iii} in \cref{cor:DD82-thm1-and-thm6} for functions $f\in\mconv$ with a slightly simpler condition (see \hyperref[item:DD82-thm1-and-thm6-iii-prime]{$\text{(iii)}'$} below):

\begin{Corollary}
\label{cor:DD82-thm1-and-thm6-for-mconv}
Let $f\in\mconv$. Then the following conditions are equivalent:
\begin{enumerate}
[label=$\text{(\roman{enumi})}$, ref=$\text{(\roman{enumi})}$, leftmargin=*]
\item\label{item:DD82-thm1-and-thm6-i-prime}
$f$ is Besicovitch almost periodic;
\item\label{item:DD82-thm1-and-thm6-ii-prime}
$f$ is Besicovitch rationally almost periodic;
\item[$\text{(iii)}'$]\label{item:DD82-thm1-and-thm6-iii-prime}
either $\|f\|_B=0$ or there exists a Dirichlet character $\chi$ such that $\mathbb{D}(f,\chi)<\infty$ (in other words, $f$ pretends to be a Dirichlet character).
\end{enumerate}
\end{Corollary}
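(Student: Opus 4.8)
The plan is to deduce \cref{cor:DD82-thm1-and-thm6-for-mconv} from \cref{cor:DD82-thm1-and-thm6} together with the observation recorded in \cref{rem:c-iii-prime}. Indeed, \cref{cor:DD82-thm1-and-thm6} already gives us, for \emph{any} $f\in\cm$, the equivalence of \ref{item:DD82-thm1-and-thm6-i-prime} (Besicovitch almost periodic), \ref{item:DD82-thm1-and-thm6-ii-prime} (Besicovitch rationally almost periodic), and the condition
\[
\text{(iii)}\colon\quad \|f\|_B=0 \ \text{ or }\ \exists\,\chi \text{ such that } \sum_{p\in\P}\tfrac1p\bigl(1-f(p)\overline{\chi(p)}\bigr) \text{ converges.}
\]
So the only thing left to verify is that, \emph{for $f\in\mconv$}, condition (iii) is equivalent to the ostensibly weaker condition \hyperref[item:DD82-thm1-and-thm6-iii-prime]{$\text{(iii)}'$}, namely that $\|f\|_B=0$ or $\mathbb{D}(f,\chi)<\infty$ for some Dirichlet character $\chi$.

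\textbf{Key step.} One direction is trivial and holds for every $f\in\cm$: if $\sum_{p\in\P}\tfrac1p(1-f(p)\overline{\chi(p)})$ converges, then taking real parts shows that $\sum_{p\in\P}\tfrac1p\bigl(1-\Re(f(p)\overline{\chi(p)})\bigr)$ converges as well (each summand is a nonnegative real number and is dominated in absolute value by the original summand's modulus), hence $\mathbb{D}(f,\chi)^2<\infty$; so (iii) $\Rightarrow$ $\text{(iii)}'$. The reverse implication $\text{(iii)}'\Rightarrow\text{(iii)}$ is exactly the content of \cref{rem:c-iii-prime}: there it is shown that if $f\in\mconv$ and $\mathbb{D}(f,\chi)<\infty$ for a Dirichlet character $\chi$, then the series $\sum_{p\in\P}\tfrac1p(1-f(p)\overline{\chi(p)})$ converges. (The argument in \cref{rem:c-iii-prime} passes to an induced character $\chi'$ of even modulus, invokes \cref{lem:mconv-characterization} to see that $f\cdot\overline{\chi'}$ has a mean, and then uses Hal\'asz's trichotomy \cref{thm:FH-thm.2.9} to rule out cases \ref{item:E79-thm6.3-iii} and \ref{item:E79-thm6.3-iv}, leaving case \ref{item:E79-thm6.3-i}, which yields convergence of $\sum_{p\in\P}\tfrac1p(1-f(p)\overline{\chi(p)})$.)

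\textbf{Assembling the proof.} Putting these together: for $f\in\mconv\subset\cm$ we have \ref{item:DD82-thm1-and-thm6-i-prime} $\Leftrightarrow$ \ref{item:DD82-thm1-and-thm6-ii-prime} $\Leftrightarrow$ (iii) by \cref{cor:DD82-thm1-and-thm6}, and (iii) $\Leftrightarrow$ $\text{(iii)}'$ by the preceding paragraph; hence all three of \ref{item:DD82-thm1-and-thm6-i-prime}, \ref{item:DD82-thm1-and-thm6-ii-prime}, $\text{(iii)}'$ are equivalent, which is the claim. I anticipate no real obstacle here, since the only non-formal input, \cref{rem:c-iii-prime}, has already been established; the sole thing to be mildly careful about is the trivial direction $\text{(iii)}'\Rightarrow\text{(iii)}$, where one should note explicitly that $\|f\|_B=0$ is carried through verbatim in both conditions, so that it suffices to treat the case $\mathbb{D}(f,\chi)<\infty$, which is precisely the hypothesis feeding into \cref{rem:c-iii-prime}.
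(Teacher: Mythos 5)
Your proposal is correct and is essentially the paper's own argument: the paper states \cref{cor:DD82-thm1-and-thm6-for-mconv} as an immediate consequence of \cref{cor:DD82-thm1-and-thm6} together with the equivalence of (iii) and $\text{(iii)}'$ established in \cref{rem:c-iii-prime}, exactly as you do. Your explicit treatment of the easy direction (convergence of the complex series implies convergence of its nonnegative real part, hence $\mathbb{D}(f,\chi)<\infty$) is a detail the paper leaves implicit, but it is right.
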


\begin{Proposition}
\label{cor:rationallity-is-D-transitive}
Let $f,g\in\mconv$ and suppose $\mathbb{D}(f,g)<\infty$. Then $f$ is Besicovitch rationally almost periodic if and only if $g$ is. 
\end{Proposition}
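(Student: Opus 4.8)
The plan is to deduce the proposition from the characterization in \cref{cor:DD82-thm1-and-thm6-for-mconv}, which asserts that a function $h\in\mconv$ is Besicovitch rationally almost periodic if and only if either $\|h\|_B=0$ or $h$ \emph{pretends to be a Dirichlet character}, i.e.\ $\mathbb{D}(h,\chi)<\infty$ for some Dirichlet character $\chi$. Since the hypothesis $\mathbb{D}(f,g)<\infty$ is symmetric in $f$ and $g$, it is enough to prove the forward implication: assuming $f$ is Besicovitch rationally almost periodic I would show the same for $g$, splitting into the two cases furnished by the above dichotomy.

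In the first case, $\|f\|_B=0$. By \cref{lem:lH2-C} this is equivalent to $\mathbb{D}(|f|,1)=\infty$. Here I would first note that $|f|$ and $|g|$ lie in $\cm$ and that property \ref{item:rem:basic-properties-of-D-5} of \cref{rem:basic-properties-of-D} turns $\mathbb{D}(f,g)<\infty$ into $\mathbb{D}(|f|,|g|)<\infty$; the triangle inequality \ref{item:rem:basic-properties-of-D-2} then gives
\[
\mathbb{D}(|g|,1)\;\geq\;\mathbb{D}(|f|,1)-\mathbb{D}(|f|,|g|)\;=\;\infty,
\]
so $\|g\|_B=0$ by \cref{lem:lH2-C} once more, whence $g$ is Besicovitch rationally almost periodic.

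In the second case there is a Dirichlet character $\chi$ with $\mathbb{D}(f,\chi)<\infty$, and then the triangle inequality yields $\mathbb{D}(g,\chi)\leq\mathbb{D}(g,f)+\mathbb{D}(f,\chi)<\infty$; thus $g$ too pretends to be a Dirichlet character, and \cref{cor:DD82-thm1-and-thm6-for-mconv} gives that $g$ is Besicovitch rationally almost periodic. This completes the forward implication, and symmetry finishes the proof.

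I do not anticipate a genuine obstacle here: essentially everything is bookkeeping with the triangle inequality for $\mathbb{D}$ together with the two characterizations already established. The only place that needs a moment's care is the branch $\|f\|_B=0$, where one cannot compare $f$ and $g$ directly to the constant function $1$ (they need not pretend to be $1$) but must first pass to the absolute values $|f|,|g|\in\cm$ — which is exactly where property \ref{item:rem:basic-properties-of-D-5} of \cref{rem:basic-properties-of-D} enters.
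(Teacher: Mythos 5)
Your proposal is correct and follows essentially the same route as the paper: the case $\|f\|_B=0$ is handled via \cref{lem:lH2-C}, property \ref{item:rem:basic-properties-of-D-5} of \cref{rem:basic-properties-of-D} and the triangle inequality, and the case where $f$ pretends to be a Dirichlet character is handled by the triangle inequality together with condition $\text{(iii)}'$ of \cref{cor:DD82-thm1-and-thm6-for-mconv}. The only cosmetic difference is that the paper splits on $\|f\|_B=0$ versus $\|f\|_B>0$ and invokes \cref{thm:DD82-thm6} to produce the character in the second case, whereas you read both branches off directly from the dichotomy in \cref{cor:DD82-thm1-and-thm6-for-mconv}; the content is identical.
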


\begin{proof}
Suppose $f$ is Besicovitch rationally almost periodic. We distinguish two cases, the case $\|f\|_B=0$
and the case $\|f\|_B>0$.

If $\|f\|_B=0$ then, by \cref{lem:lH2-C}, we have $\mathbb{D}(|f|,1)=\infty$. It follows from part \ref{item:rem:basic-properties-of-D-5} of \cref{rem:basic-properties-of-D} that
$\mathbb{D}(|f|,|g|)<\infty$ and therefore, using the triangle inequality for $\mathbb{D}(\cdot,\cdot)$, we get $\mathbb{D}(|g|,1)=\infty$. Another application of \cref{lem:lH2-C} shows that $\|g\|_B=0$. Since any function with $\|g\|_B=0$ is trivially Besicovitch rationally almost periodic, this concludes the first case.

Now assume $\|f\|_B>0$. Then, by \cref{thm:DD82-thm6}, there exists a Dirichlet character $\chi$ such that $\sum_{p\in\P}\frac{1}{p}(1-f(p)\overline{\chi(p)})$ converges. This implies that $\mathbb{D}(f,\chi)<\infty$ and, combined with $\mathbb{D}(f,g)<\infty$ and the triangle inequality for $\mathbb{D}(\cdot,\cdot)$, we obtain $\mathbb{D}(g,\chi)<\infty$. Since $g\in\mconv$, we can now use \hyperref[item:DD82-thm1-and-thm6-iii-prime]{$\text{(iii)}'$} from \cref{cor:DD82-thm1-and-thm6-for-mconv} to deduce that $g$ is Besicovitch almost periodic.
\end{proof}

\subsection{Proof of \cref{thm:dichotomy}}
\label{sec:proof-of-dichotomy}

In this subsection we provide a proof of the dichotomy theorem for $\mconv$. The proof is rather short and follows from the results established in the previous subsection and in \cite{DD74,DD82,Delange72, FH17} (see \cref{thm:tAU} and \cref{prop:l10}).

\begin{Lemma}
\label{lem:mconv-orth-archimedean}
Suppose $f\in\mconv$. Then for any Dirichlet character $\chi$ and any $t\in\R\setminus\{0\}$ one has $\mathbb{D}(f,\chi\cdot n^{it})=\infty$.
\end{Lemma}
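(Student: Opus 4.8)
The plan is to argue by contradiction. Suppose $f\in\mconv$ and that there exist a Dirichlet character $\chi$ and a real $t\neq 0$ with $\mathbb{D}(f,\chi\cdot n^{it})<\infty$. The strategy is to transfer this ``pretentiousness'' through the class $\mconv$ and ultimately contradict the fact, recorded in \cref{DirArch}, that $\mathbb{D}(\chi',n^{it})=\infty$ for every Dirichlet character $\chi'$ and every $t\neq0$.

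First I would recall the characterization of $\mconv$ from \cref{lem:mconv-characterization}: since $f\in\mconv$, the mean value $M(\chi\cdot f)$ exists, so the multiplicative function $g:=f\cdot\overline{\chi}$ (which lies in $\cm$) has a mean value. Hence $g$ satisfies one of the three mutually exclusive conditions \ref{item:E79-thm6.3-i}, \ref{item:E79-thm6.3-iii}, \ref{item:E79-thm6.3-iv} of \cref{thm:FH-thm.2.9}. Now $\mathbb{D}(f,\chi\cdot n^{it})<\infty$ together with the fact that $\chi(p)=\chi(p)$ for all $p$ and the triangle inequality for $\mathbb{D}$ gives $\mathbb{D}(g,n^{it})=\mathbb{D}(f\overline{\chi},n^{it})\le\mathbb{D}(f,\chi n^{it})<\infty$ (after replacing $\chi$ by $\overline\chi$ in the obvious way; $\mathbb{D}(f\overline\chi,n^{it})=\mathbb{D}(f,\chi n^{it})$ on the primes where $\chi$ does not vanish, and the finitely many bad primes contribute nothing to convergence). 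In particular $g$ cannot satisfy \ref{item:E79-thm6.3-iv}, which requires $\mathbb{D}(g,n^{is})=\infty$ for \emph{all} $s$. So $g$ satisfies \ref{item:E79-thm6.3-i} or \ref{item:E79-thm6.3-iii}; in either case $\mathbb{D}(g,n^{it'})<\infty$ for some real $t'$ (namely $t'=0$ in case \ref{item:E79-thm6.3-i}, where in fact $\sum_p\tfrac1p(1-g(p))$ converges, and $t'$ the value from \ref{item:E79-thm6.3-iii}).

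Now combine the two finite distances: $\mathbb{D}(g,n^{it})<\infty$ and $\mathbb{D}(g,n^{it'})<\infty$ yield, by the triangle inequality, $\mathbb{D}(n^{it},n^{it'})<\infty$, i.e.\ $\mathbb{D}(1,n^{i(t-t')})<\infty$. By \cref{DirArch} (the ``in particular'' clause) this forces $t-t'=0$, so $t'=t$. But we are in case \ref{item:E79-thm6.3-i} or \ref{item:E79-thm6.3-iii}: in case \ref{item:E79-thm6.3-i} we have $t'=0$, hence $t=0$, contradicting $t\neq0$; in case \ref{item:E79-thm6.3-iii} we have $\mathbb{D}(g,n^{it})<\infty$ meaning $\mathbb{D}(f,\chi n^{it})<\infty$ with the extra constraint that $g(2^k)=-2^{itk}$ for all $k$ — but here the cleaner route is simply: case \ref{item:E79-thm6.3-iii} already gives $\mathbb{D}(g,n^{it'})<\infty$, and unwinding $g=f\overline\chi$ gives $\mathbb{D}(f,\chi n^{it'})<\infty$; if $t'\neq0$ this says $f$ pretends to be $\chi n^{it'}$, and I would instead push the contradiction through \cref{prop:l10}, or note directly that in case \ref{item:E79-thm6.3-iii} one has $\chi(2^k)\ne 0$ forced by $g(2^k)=-2^{itk}\ne0$, so $\chi$ has odd modulus, and then apply the induced-character trick from \cref{rem:c-iii-prime} to replace $\chi$ by a character of even modulus, ruling out case \ref{item:E79-thm6.3-iii} outright and leaving only case \ref{item:E79-thm6.3-i}, which gives $t=0$ as above.

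The cleanest organization is probably: reduce to $g=f\overline{\chi'}$ where $\chi'$ has even modulus (using \eqref{eq:induced-character} as in \cref{rem:c-iii-prime}), so that \ref{item:E79-thm6.3-iii} is automatically excluded since $\chi'(2^k)=0$; then $g$ has a mean (as $f\in\mconv$) and $\mathbb{D}(g,n^{it})<\infty$, so $g$ must satisfy \ref{item:E79-thm6.3-i}, whence $\sum_p\tfrac1p(1-g(p))$ converges, i.e.\ $\mathbb{D}(g,1)<\infty$; combined with $\mathbb{D}(g,n^{it})<\infty$ and the triangle inequality this gives $\mathbb{D}(1,n^{it})<\infty$, contradicting \cref{DirArch} since $t\neq0$. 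The main obstacle — really the only subtlety — is the bookkeeping needed to pass from $\chi$ to an even-modulus character $\chi'$ without destroying the hypothesis $\mathbb{D}(f,\chi\cdot n^{it})<\infty$ or the membership $f\overline{\chi'}\in\mconv$-with-a-mean; this is handled exactly as in \cref{rem:c-iii-prime}, using that $\chi$ and $\chi'$ agree off finitely many primes and that $M(\chi'\cdot f)$ exists by \cref{lem:mconv-characterization}.
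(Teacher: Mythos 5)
Your proposal is correct, and the ``cleanest organization'' you settle on is essentially the paper's own proof: pass to an even-modulus character $\chi'$ as in \cref{rem:c-iii-prime} to kill case \ref{item:E79-thm6.3-iii} of \cref{thm:FH-thm.2.9}, use $f\in\mconv$ via \cref{lem:mconv-characterization} to know $f\cdot\overline{\chi'}$ has a mean, rule out \ref{item:E79-thm6.3-iv} from $\mathbb{D}(f\cdot\overline{\chi'},n^{it})<\infty$, and then derive a contradiction with \ref{item:E79-thm6.3-i} via the triangle inequality and \cref{DirArch}. The only difference is cosmetic (the paper rules out \ref{item:E79-thm6.3-i} last by showing $\mathbb{D}(f\cdot\overline{\chi'},1)=\infty$, whereas you assume \ref{item:E79-thm6.3-i} and contradict \cref{DirArch} directly), so no further comment is needed.
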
 

\begin{proof}
Suppose there exist a Dirichlet character $\chi$ and some $t\in\R\setminus\{0\}$ such that $\mathbb{D}(f,\chi\cdot n^{it})<\infty$. Let $q$ denote a modulus of $\chi$. If $q$ is even, then we set $\chi':=\chi$ and if $q$ is odd then we set $\chi':=\chi\cdot \chi_1$, where $\chi_1$ denotes the principal character of modulus $2q$.

We now use an argument that has already appeared in \cref{rem:c-iii-prime}.
Since $f\in\mconv$, by \cref{lem:mconv-characterization}, the mean of the function $f\cdot\overline{\chi'}$ exists.
This means that $f\cdot\overline{\chi'}$ satisfies either \ref{item:E79-thm6.3-i}, \ref{item:E79-thm6.3-iii} or \ref{item:E79-thm6.3-iv} of \cref{thm:FH-thm.2.9}. However, $f\cdot\overline{\chi'}$ cannot satisfy \ref{item:E79-thm6.3-iii} because $\chi'$ has even modulus and hence $\chi'(2^k)=0$ for all $k$. 
Since $\chi'(p)=\chi(p)$ for all but finitely many primes $p$, we deduce from $\mathbb{D}(f,\chi\cdot n^{it})<\infty$ that $\mathbb{D}(f\cdot\overline{\chi'},n^{it})<\infty$. It follows that $f\cdot\overline{\chi'}$ cannot satisfy \ref{item:E79-thm6.3-iv}.
Finally, $\mathbb{D}(f\cdot\overline{\chi'},n^{it})<\infty$ together with property \ref{item:rem:basic-properties-of-D-2} listed in \cref{rem:basic-properties-of-D} and \cref{item:rem:basic-properties-of-D-4} imply that $\mathbb{D}(f\cdot\overline{\chi'},1)=\infty$ and therefore $f\cdot\overline{\chi'}$ cannot satisfy \ref{item:E79-thm6.3-i} of \cref{thm:FH-thm.2.9}; we have arrived at a contradiction.
\end{proof}

\begin{proof}[Proof of \cref{thm:dichotomy}]
Let $f\in\mconv$ be arbitrary. If $\mathbb{D}(f,\chi\cdot n^{it})=\infty$ for all $t\in\R$ and all Dirichlet characters $\chi$ then we deduce from \cref{prop:l10} that $f$ is aperiodic and therefore, in view of \cref{thm:tAU}, $f$ is a uniform function.
If, on the other hand, $\mathbb{D}(f,\chi\cdot n^{it})<\infty$ for some $t\in\R$ and some Dirichlet characters $\chi$, then we first apply \cref{lem:mconv-orth-archimedean} to deduce that $t=0$ and hence $\mathbb{D}(f,\chi)<\infty$ and thereafter, using \cref{cor:rationallity-is-D-transitive}, we conclude that $f$ is Besicovitch rationally almost periodic because $\chi$ is periodic.
\end{proof}

\section{The structure theorem for $\Dzero$}
\label{sec:structure-theorem}

The goal of this section is to give a proof of \cref{thm:structure-theorem}. In Subsection \ref{subsec:conditional-uniformity} we discuss in some detail relatively uniform sets. Subsection \ref{sec:52} is devoted to the proof of \cref{thm:structure-theorem} for the special case of level sets of concentrated multiplicative functions. Finally, in Subsection \ref{sec:poF} we establish \cref{thm:structure-theorem} in full generality by reducing it to the special case established in Subsection \ref{sec:52}.

\subsection{Relative uniformity}
\label{sec:rational-sets}
\label{subsec:conditional-uniformity}

In this subsection we provide additional examples of relatively uniform sets and prove a technical lemma which will be needed in the subsequent subsections.

We start with recalling the definition of relative uniformity of sets.
Given sets $E,R\subset\N$ we say \define{$E$ is uniform relative to $R$} if $E\subset R$, $d(E)$ and $d(R)$ exist and the function $d(R)\1_{E}-d(E)\1_{R}$ is uniform, i.e.\ $\|d(R)\1_{E}-d(E)\1_{R}\|_{U^s_{[N]}}$ goes to zero as $N\to\infty$ for all $s\geq 1$.

In \cref{eg:conditional-uniformity-0} we have already seen a natural example of sets $E$ and $R$ such that $E$ is uniform relative to $R$, namely $E=\{n\in\N:\mob(n)=1\}$ and $R=Q$ (where $Q$ denotes the set of squarefree numbers).
We list below some additional examples illustrating relative uniformity.

\begin{Example}
\label{eg:conditional-uniformity}\
\begin{enumerate}
[label=\text{Ex.\ref{eg:conditional-uniformity}.\arabic*:}, ref=\text{Ex.\ref{eg:conditional-uniformity}.\arabic*}, leftmargin=*]
\item

Let $R\subset \N$ be an arbitrary set whose density $d(R)$ exists and is positive. Let $(X_n)_{n\in R}$ be a sequence of $\{0,1\}$-valued independently and identically distributed random variables such that $X_n$ takes on the value $1$ with probability $\frac{1}{2}$ and the value $0$ with probability $\frac{1}{2}$.
It is then straightforward to show (using Hoeffding's inequality) that almost surely the set $E:=\{n\in R: X_n=1\}$ is uniform relative to $R$.

\item
Let $\xi\in[0,1)$ and let $J$ be a Jordan measurable subset of the circle $S^1:=\{w\in\C:|w|=1\}$. It was shown in \cite{FH17-2} that the set $\{n\in\N:\lio_{\xi}(n)\in J\}$ is uniform. It thus follows from \cref{lem:ST-homogeneity-of-uniformity-within-rational-sets} below that the set $E=\{n\in\N:\mob_{\xi}(n)\in J\}$ is uniform relative to the squarefree numbers $Q$, because $Q$ is a rational set and $E=\{n\in\N:\lio_{\xi}(n)\in J\}\cap Q$.
\end{enumerate}
\end{Example}

One can show that if sets $E,R,V\subset\N$ are such that $V$ is rational (see \cref{definition:rational-set}) and $E$ is uniform relative to $R$ then $E\cap V$ is uniform relative to $R\cap V$; in fact we have the following slightly stronger result.

\begin{Lemma}
\label{lem:ST-homogeneity-of-uniformity-within-rational-sets}
Suppose $E\subset R\subset\N$ are sets such that $d(E)$
and $d(R)$ exist and suppose $d(R)\1_E-d(E)\1_R$ is uniform.
Let $t\in\N$, let $V\subset\N$ be any rational set and define $E':=tE\cap V$ and $R':=tR\cap V$.
If $d(R')$ exists, then $d(E')$ exists and satisfies the equation
\begin{equation}
\label{eq:ref-1-1}
d(E)d(R')=d(R)d(E')
\end{equation}
and the function $d(R')\1_{E'}-d(E')\1_{R'}$ is uniform.
\end{Lemma}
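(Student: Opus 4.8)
The plan is to reduce everything to the case $V=\N$ first, then handle the intersection with a general rational set $V$ by approximation. So as a first step I would treat $E':=tE$ and $R':=tR$. Multiplication by $t$ dilates densities: $d(tA)=\tfrac1t d(A)$ whenever $d(A)$ exists, so $d(E')=\tfrac1t d(E)$ and $d(R')=\tfrac1t d(R)$, which immediately gives \eqref{eq:ref-1-1} in this sub-case. For uniformity, note $d(R')\1_{E'}-d(E')\1_{R'} = \tfrac1t\big(d(R)\1_{tE}-d(E)\1_{tR}\big)$, and $d(R)\1_{tE}-d(E)\1_{tR}$ is exactly the dilation by $t$ of the function $d(R)\1_{E}-d(E)\1_{R}$, which is uniform by hypothesis. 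Uniformity is preserved under dilation by \cref{prop:interplay-uniform-almost-periodic}\ref{itm:prop:interplay-uniform-almost-periodic-c} (the statement there is for multiplicative functions of the form $f(tn)$, but the same induction applies verbatim to any bounded $f\colon\N\to\C$, or one can invoke the computation given in its proof). Hence $d(R')\1_{E'}-d(E')\1_{R'}$ is uniform when $V=\N$.

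Next I would incorporate the rational set $V$. Write $F:=tE$, $G:=tR$, so from the previous step $d(G)\1_F-d(F)\1_G$ is uniform, $F\subset G$, and both densities exist. We want to show $d(G)\1_{F\cap V}-d(F)\1_{G\cap V}$ is uniform, assuming $d(G\cap V)$ exists (note $d(G)\1_{F\cap V}-d(F)\1_{G\cap V} = \big(d(G)\1_F-d(F)\1_G\big)\cdot\1_V$). By \cref{prop:interplay-uniform-almost-periodic}\ref{itm:prop:interplay-uniform-almost-periodic-b}, the product of a uniform function with a Besicovitch rationally almost periodic function is uniform; since $V$ is rational, $\1_V$ is Besicovitch rationally almost periodic by \cref{definition:rational-set}\ref{def:itm:rational}, so $\big(d(G)\1_F-d(F)\1_G\big)\cdot\1_V$ is uniform. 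This already gives the uniformity assertion of the lemma, provided we also know $d(E')=d(F\cap V)$ exists.

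It remains to establish existence of $d(E')$ and the identity \eqref{eq:ref-1-1}. This is where I'd be most careful, since a priori $d(F\cap V)$ need not exist just because $\1_V$ is rational — but uniformity of $d(G)\1_{F\cap V}-d(F)\1_{G\cap V}$ forces it. Indeed, a uniform function is in particular $U^1$-uniform, hence has mean value $0$; actually a cleaner route: every uniform function $h$ satisfies $\lim_N \tfrac1N\sum_{n\le N} h(n)=0$ (the $U^1$ seminorm dominates the Cesàro average up to the normalizing constant). Applying this to $h=d(G)\1_{F\cap V}-d(F)\1_{G\cap V}$ and using that $d(G\cap V)$ exists by hypothesis, we get
\[
d(G)\cdot\lim_{N\to\infty}\frac{|F\cap V\cap[N]|}{N} = d(F)\, d(G\cap V),
\]
and in particular the limit on the left exists (since $d(G)>0$ because $d(R)>0$ and $t$ is fixed — if $d(R)=0$ the lemma is vacuous or trivial, as all relevant functions vanish in $\|\cdot\|_B$ hence are uniform, and the density identity reads $0=0$). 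Thus $d(E')=d(F\cap V)$ exists and $d(G)\,d(E') = d(F)\,d(G\cap V) = d(F)\,d(R')$; substituting $d(F)=d(tE)=\tfrac1t d(E)$ and $d(G)=\tfrac1t d(R)$ gives $d(R)\,d(E') = d(E)\,d(R')$, which is \eqref{eq:ref-1-1}. The main obstacle, then, is not any single hard estimate but rather organizing the two reductions (dilation, then intersection with $V$) cleanly and extracting the density existence from uniformity rather than assuming it; all the analytic content is already packaged in \cref{prop:interplay-uniform-almost-periodic}.
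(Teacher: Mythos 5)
Your proposal is correct and follows essentially the same route as the paper's proof: dilation by $t$ via \cref{prop:interplay-uniform-almost-periodic}\ref{itm:prop:interplay-uniform-almost-periodic-c}, multiplication by $\1_V$ via part \ref{itm:prop:interplay-uniform-almost-periodic-b}, and then extracting the existence of $d(E')$ together with \eqref{eq:ref-1-1} from the fact that uniform functions have zero mean, before rescaling by $d(R')/d(R)$. Your explicit caveat about part \ref{itm:prop:interplay-uniform-almost-periodic-c} being literally stated for compressions $f(tn)$ rather than dilations is a fair observation, but the paper invokes it in exactly the same way.
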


\begin{proof}
If $d(R)=0$ then $d(E)=d(E')=d(R')=0$ and hence there is nothing to show. Let us therefore assume that $d(R)> 0$.
Since $d(R)\1_E-d(E)\1_R$ is uniform, it follows from \cref{prop:interplay-uniform-almost-periodic} part \ref{itm:prop:interplay-uniform-almost-periodic-c} that the function 
$d(R)\1_{tE}-d(E)\1_{tR}$ is uniform. Then, using \cref{prop:interplay-uniform-almost-periodic} part \ref{itm:prop:interplay-uniform-almost-periodic-b}, it follows that $(d(R)\1_{tE}-d(E)\1_{tR})\cdot\1_{V}=d(R)\1_{E'}-d(E)\1_{R'}$ is uniform as well.
By definition, any uniform function has zero mean.
From this we immediately obtain the identity $d(E)d(R')=d(R)d(E')$ whenever $d(R')$ exists.
Using this identity and multiplying the function $d(R)\1_{E'}-d(E)\1_{R'}$ by the constant $d(R')/d(R)$ we obtain the function $d(R')\1_{E'}-d(E')\1_{R'}$.
This shows that $d(R')\1_{E'}-d(E')\1_{R'}$ is also uniform.
\end{proof}

\subsection{A proof of \cref{thm:structure-theorem} for the special case of concentrated multiplicative functions}\label{sec:52}

Let $f\colon \N\to\C\setminus\{0\}$ be a concentrated multiplicative function (see \cref{def:concentrated}) and let $\G$ denote its concentration group.
Clearly, $z^{|\G|}=1$ for all $z\in\G$. Let us consider all pairs $(k,\chi)$, where $k\in\N$ and $\chi$
is a Dirichlet character, such that
\begin{equation}
\label{eqn:ST-concentration-character-1}
\mathbb{D}(f^k,\chi)<\infty.
\end{equation}
There is at least one such pair $(k,\chi)$, because we can pick
$k=|\G|$ and $\chi$ to be the principal character of modulus $1$ (i.e.\ $\chi(n)=1$ for all $n\in\N$). This leads to the following definition.

\begin{Definition}
\label{def:ST-concentration-character}
Given a concentrated multiplicative function $f$
with concentration group $\G$ let $k_\G$ denote the smallest
positive integer such that for some Dirichlet character $\chi_\G$
equation \eqref{eqn:ST-concentration-character-1} is satisfied. 
\end{Definition}

The next theorem is a version of \cref{thm:structure-theorem} for concentrated multiplicative functions and constitutes the main result of this subsection. In Subsection \ref{sec:poF} we will show how \cref{thm:structure-theorem} can be derived in its full generality from this special case.

\begin{Theorem}\label{thm:ST}
Let $g$ be a concentrated multiplicative function with concentration group $\G$ and let $k_\G$ be as in \cref{def:ST-concentration-character}.
Then $g^{k_\G}$ is Besicovitch rationally almost periodic and for every $z\in\C\setminus\{0\}$ the set $E_g:=E(g,z)$ is uniform relative to $R_{g}:=E(g^{k_\G},z^{k_\G})$.
\end{Theorem}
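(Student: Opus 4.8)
The plan is to exploit the structure of concentrated multiplicative functions via Ruzsa's theorem together with Hal\'asz–Wirsing-type mean-value results and the characterization of Besicovitch rationally almost periodic functions in $\cm$. First I would establish that $g^{k_\G}$ is Besicovitch rationally almost periodic. By \cref{def:ST-concentration-character} there is a Dirichlet character $\chi_\G$ with $\mathbb{D}(g^{k_\G},\chi_\G)<\infty$. Since $g$ is concentrated, $g^{k_\G}$ is also concentrated (each concentration point $z$ satisfies $z^{k_\G}\in\G$, and one checks concentration is preserved under powers using \cref{vector}), so in particular $\|g^{k_\G}\|_B>0$ by \cref{lem:lH2-C} (as $g^{k_\G}$ has positive-density fibers away from $0$). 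Now $\mathbb{D}(g^{k_\G},\chi_\G)<\infty$ combined with $\chi_\G$ periodic and the implication \ref{item:DD82-thm1-and-thm6-iii}$\Rightarrow$\ref{item:DD82-thm1-and-thm6-ii} of \cref{cor:DD82-thm1-and-thm6} (noting $\mathbb{D}(g^{k_\G},\chi_\G)<\infty$ gives convergence of $\sum_p \frac1p(1-g^{k_\G}(p)\overline{\chi_\G(p)})$) shows $g^{k_\G}$ is Besicovitch rationally almost periodic. It follows (see \cref{def:rational-D_rat}) that $R_g=E(g^{k_\G},z^{k_\G})$ is a rational set, and since $g^{k_\G}$ is concentrated, $\cref{thm:ST-Ruzsa-3.10}$ ensures $d(R_g)>0$ whenever $z^{k_\G}$ is in the image (which it is, as $z$ is in the image of $g$).

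Next I would verify the inclusion $E_g\subset R_g$, which is immediate since $g(n)=z$ forces $g^{k_\G}(n)=z^{k_\G}$. The heart of the matter is to show $d(R_g)\1_{E_g}-d(E_g)\1_{R_g}$ is uniform. The idea is to write $\1_{E_g}$ restricted to $R_g$ as a ``Fourier expansion'' over the finite group $\G$: on the set $R_g$, the value $g(n)$ lies in a fixed coset structure, and one can detect $g(n)=z$ among all $n$ with $g^{k_\G}(n)=z^{k_\G}$ by averaging suitable multiplicative characters of $\G$ composed with $g$. Concretely, letting $m=|\G|$, for $n\in R_g$ one has
\begin{equation*}
\1_{E_g}(n)=\frac{1}{|\Lambda|}\sum_{\psi\in\Lambda}\psi(g(n))\overline{\psi(z)},
\end{equation*}
where $\Lambda$ is an appropriate group of characters of $\G$ (those trivial on the relevant subgroup determined by $k_\G$), and $\psi\circ g$ is again a multiplicative function in $\cm$. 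The principal character term contributes precisely $\frac{1}{|\Lambda|}\1_{R_g}=\frac{d(E_g)}{d(R_g)}\1_{R_g}$ after computing densities via \cref{thm:ST-Ruzsa-3.10} (which gives $d(E(g,z))$ is the same for all $z$ in the image, so the ratio is $1/|\Lambda|$). For each non-principal $\psi$, the function $\psi\circ g$ is a multiplicative function whose $k$-th powers avoid pretending to any Dirichlet character for $k<k_\G$ by minimality of $k_\G$, and more precisely $\psi\circ g$ pretends to no $\chi\cdot n^{it}$; hence by \cref{prop:l10} and \cref{thm:tAU} it is uniform. Multiplying by $\1_{R_g}$ (a rational set) and using \cref{prop:interplay-uniform-almost-periodic}\ref{itm:prop:interplay-uniform-almost-periodic-b} keeps it uniform, and a finite sum of uniform functions is uniform by the triangle inequality for $\|\cdot\|_{U^s_{[N]}}$. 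Rearranging gives $d(R_g)\1_{E_g}-d(E_g)\1_{R_g}$ uniform, as desired.

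The main obstacle I anticipate is the character-theoretic bookkeeping in the middle step: one must carefully identify the correct group $\Lambda$ of characters of $\G$ so that (a) averaging over $\Lambda$ exactly recovers $\1_{E_g}$ on $R_g$, (b) the density ratio $d(E_g)/d(R_g)$ equals $1/|\Lambda|$, and (c) every non-principal $\psi\in\Lambda$ yields an aperiodic (equivalently uniform) function $\psi\circ g$ — this last point is exactly where the minimality in \cref{def:ST-concentration-character} of $k_\G$ must be used, to rule out $\mathbb{D}(\psi\circ g, \chi\cdot n^{it})<\infty$ for any Dirichlet $\chi$ and any $t$ (the Archimedean part $t\neq0$ being excluded because $g$, being concentrated, has all powers pretending to \emph{some} Dirichlet character, so an Archimedean twist would be inconsistent, cf.\ the argument in \cref{lem:mconv-orth-archimedean}). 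Uniqueness of $R_g$ in \cref{thm:structure-theorem} I would defer to the general reduction in Subsection~\ref{sec:poF}, so it need not be addressed here.
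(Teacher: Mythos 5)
Your Fourier-inversion identity on $R_g$ is essentially the device the paper uses in \cref{lem:f-to-the-k-is-chi} (with the caveat that the relevant characters are the powers $w\mapsto w^j$, $j=0,\dots,k_\G-1$, applied to $g(n)/z$ rather than characters of $\G$ applied to $g(n)$ --- neither $g(n)$ nor $z$ need lie in $\G$, and with $\psi(z)^{-1}$ in place of $\overline{\psi(z)}$ since $|z|$ need not be $1$). The genuine gap is in the step where you conclude that each non-principal term is uniform: you invoke \cref{prop:l10} and \cref{thm:tAU}, both of which apply only to functions in $\cm$, i.e.\ bounded in modulus by $1$. A concentrated multiplicative function $g\colon\N\to\C\setminus\{0\}$ is not assumed to lie in $\cm$; its values at exceptional primes and at higher prime powers are arbitrary nonzero complex numbers, so the functions $g^j$ (your ``$\psi\circ g$'') need not be bounded and the aperiodicity$\Rightarrow$uniformity machinery does not apply to them. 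This is precisely why the paper does not argue with $g$ directly: it first constructs an auxiliary function $f\in\cm$ agreeing with $g$ on the good prime powers and satisfying $f^{k_\G}=\chi_\G$ \emph{exactly}, proves the statement for $f$ via \cref{lem:f-to-the-k-is-chi}, and then transfers the conclusion back to $g$ by perturbing one prime power at a time (\cref{lem:lifting-trick-2} and \cref{lem:lifting-trick-3}) --- an approximation carried out at the level of the bounded indicator functions $\1_{E_g}$, $\1_{R_g}$. Your proposal contains no analogue of this reduction, and without it the central step fails.

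Two further problems. First, \cref{thm:ST-Ruzsa-3.10} does \emph{not} assert that all fibers of a concentrated function have equal density (for $g(n)=(-1)^{v}$ with $v$ the $2$-adic valuation, the two fibers have densities $2/3$ and $1/3$); the relation $d(R_g)=k_\G\, d(E_g)$ must instead be extracted, as in the paper, from the fact that $\1_{E_g}-\tfrac{1}{k_\G}\1_{R_g}$ is uniform and hence has zero mean. Second, $\mathbb{D}(g^{k_\G},\chi_\G)<\infty$ controls only the real part of $\sum_p\frac1p\bigl(1-g^{k_\G}(p)\overline{\chi_\G(p)}\bigr)$, so convergence of that complex series is not automatic from finiteness of the distance; it can be salvaged here because $g^{k_\G}(p)\overline{\chi_\G(p)}$ is a root of unity of bounded order away from a summable set of primes, but this requires an argument (the paper instead obtains the rational almost periodicity of $g^{k_\G}$ as an output of the perturbation lemmas). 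Deferring the uniqueness of $R$ to Subsection~\ref{sec:poF} is fine, since \cref{thm:ST} does not claim it.
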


For the proof of \cref{thm:ST} we need three lemmas.

\begin{Lemma}
\label{lem:lifting-trick-2}
Let $p\in\P$ and let $k,m\in\N$ and let $c\geq 1$. Let $f$ and $g$ be multiplicative functions and suppose $f(q^\ell)=g(q^\ell)$ for all pairs $(q,\ell)\in\P\times \N$ with $(q,\ell)\neq (p,k)$.
Assume $f^m$ is Besicovitch rationally almost periodic and for every $z\in\C\setminus\{0\}$ the set $E_f:=E(f,z)$ is uniform relative to $R_{f}:=E(f^m,z^m)$ and $c d(E_f)=d(R_f)$.
Then $g^m$ is Besicovitch rationally almost periodic and for every $z\in\C\setminus\{0\}$ the set $E_g:=E(g,z)$ is uniform relative to $R_{g}:=E(g^m,z^m)$ and $c d(E_g)=d(R_g)$.
\end{Lemma}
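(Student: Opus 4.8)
The plan is to exploit the fact that $f$ and $g$ differ only at a single prime power $p^k$, which means the multiplicative structure of the two functions is nearly identical, and to "lift" the almost periodicity and relative uniformity along the fibers of the map $n \mapsto n/p^{v}$, where $v = v_p(n)$ is the $p$-adic valuation. Concretely, write every $n \in \N$ uniquely as $n = p^j \cdot m$ with $p \nmid m$. Since $f(q^\ell) = g(q^\ell)$ for all $(q,\ell) \neq (p,k)$, one has $f(n) = g(n)$ unless $j \geq k$, and for $j \geq k$ the values $f(n)$ and $g(n)$ differ by the fixed multiplicative factor $f(p^k)/g(p^k)$ raised to the power one — more precisely, $g(p^j) = f(p^j)\cdot (g(p^k)/f(p^k))$ for $j \geq k$ (here I am implicitly using that $f(p^j) = f(p)^j$-type behavior is \emph{not} assumed, so one should be careful: $f(p^j)$ and $g(p^j)$ are independent data for $j\neq k$, so in fact $f(p^j)=g(p^j)$ for \emph{all} $j\neq k$, and only $f(p^k)\neq g(p^k)$ possibly). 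This last observation is the crux: $g(n) = f(n)$ whenever $v_p(n)\neq k$, and $g(n) = f(n)\cdot \tfrac{g(p^k)}{f(p^k)}$ whenever $v_p(n)=k$, \emph{provided} $f(p^k)\neq 0$; the case $f(p^k)=0$ or $g(p^k)=0$ needs a separate (easy) argument since then one of $E_f$, $E_g$ on the relevant fiber is empty.

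First I would set $\alpha := g(p^k)/f(p^k) \in \C\setminus\{0\}$ (reducing to this case as above) and record the decomposition $\1_{E_g}(n) = \1_{E_f}(n)\1_{\{v_p(n)\neq k\}}(n) + \1_{E(f,z/\alpha)}(n)\1_{\{v_p(n)=k\}}(n)$, together with the analogous identity for $\1_{R_g}$ using $E(f^m,(z/\alpha)^m)$ — note $g^m$ vs $f^m$ differ by $\alpha^m$ in the same way. The sets $\{v_p(n) = k\}$ and $\{v_p(n)\neq k\}$ are finite unions of arithmetic progressions (cosets of $p^{k}\N$ and $p^{k+1}\N$), hence rational. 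So $g^m$ is a sum of products of the Besicovitch rationally almost periodic function $f^m$ (applied as $f^m(n)$, and also $f^m$ composed with dilation, if needed — actually no dilation is needed here since we are only \emph{restricting} to progressions, not rescaling) with indicator functions of progressions, and is therefore itself Besicovitch rationally almost periodic by the elementary closure properties of the class (products and finite sums of Besicovitch rationally almost periodic functions are again such, which follows directly from \cref{def:d3}). For the relative uniformity of $E_g$ in $R_g$, I would write $d(R_g)\1_{E_g} - d(E_g)\1_{R_g}$ and substitute the fiber decompositions, so that it becomes a linear combination, with constant coefficients, of the two functions $\big(d(R_f)\1_{E_f}-d(E_f)\1_{R_f}\big)$ and $\big(d(R_f')\1_{E_f'}-d(E_f')\1_{R_f'}\big)$ — where $E_f' = E(f,z/\alpha)$, $R_f' = E(f^m,(z/\alpha)^m)$ — each multiplied by the indicator of a progression. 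Here I use the hypothesis $c\,d(E_f) = d(R_f)$ (which holds for \emph{every} $z$, in particular for $z/\alpha$, so also $c\,d(E_f') = d(R_f')$) to match up the densities and see that the coefficients combine correctly; then \cref{prop:interplay-uniform-almost-periodic}\ref{itm:prop:interplay-uniform-almost-periodic-b} (multiplying a uniform function by a rational, hence Besicovitch rationally almost periodic, indicator keeps it uniform) plus part \ref{itm:prop:interplay-uniform-almost-periodic-a} (closure under $\|\cdot\|_B$-limits, or just finite linear combinations) finishes it. The density identity $c\,d(E_g) = d(R_g)$ then drops out by taking means of the uniform function, exactly as in the proof of \cref{lem:ST-homogeneity-of-uniformity-within-rational-sets}.

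The main obstacle, and the step requiring the most care, is the bookkeeping of the constant $c$ and the densities: one must check that $c\,d(E_f)=d(R_f)$ really transfers to give $c\,d(E_g)=d(R_g)$, and that the coefficients in the linear combination $d(R_g)\1_{E_g}-d(E_g)\1_{R_g}$ are genuinely constants (not depending on $N$) so that \cref{prop:interplay-uniform-almost-periodic} applies. Using $d(E_g) = d(E_f)\,d(\{v_p\neq k\}) + d(E_f')\,d(\{v_p = k\})$ and the analogous formula for $d(R_g)$, together with $d(R_f) = c\,d(E_f)$ and $d(R_f') = c\,d(E_f')$, one gets $d(R_g) = c\, d(E_g)$ directly, and the coefficient matching in the uniform-function identity follows by the same arithmetic. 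A secondary but genuinely necessary subtlety is the degenerate case $f(p^k) = 0$ (so $\alpha$ is undefined): then $E_f$ avoids the progression $\{v_p = k\}$ entirely, $\1_{E_g} = \1_{E_f}\1_{\{v_p\neq k\}} + \1_{E(g,z)}\1_{\{v_p=k\}}$ where on $\{v_p=k\}$ the function $g$ takes the single value $g(p^k)\cdot(\text{value of }f\text{ on the }p\text{-free part})$, so $E(g,z)\cap\{v_p=k\}$ is again a relatively uniform set inside $R_g\cap\{v_p=k\}$ by a one-line application of \cref{lem:ST-homogeneity-of-uniformity-within-rational-sets}; this case (and its mirror $g(p^k)=0$) should be dispatched first so that the main argument may assume $f(p^k), g(p^k) \neq 0$.
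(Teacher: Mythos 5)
Your proposal is correct and follows essentially the same route as the paper: the same partition of $\N$ into the fiber $T=\{n:v_p(n)=k\}$ and its complement $S$, the same identification of $E_g\cap T$ with a rescaled level set of $f$ (the paper uses $E(f,zf(p^k)/g(p^k))$, you use $E(f,z/\alpha)$ with $\alpha=g(p^k)/f(p^k)$, which is the same set), the same use of the rationality of $S$ and $T$ together with \cref{prop:interplay-uniform-almost-periodic} and \cref{lem:ST-homogeneity-of-uniformity-within-rational-sets} to transfer uniformity, and the same density bookkeeping to propagate $c\,d(E)=d(R)$. If anything you are slightly more careful than the paper about the degenerate case $f(p^k)=0$ with $g(p^k)\neq 0$, which the paper's displayed chain of equivalences quietly glosses over.
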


\begin{proof}
Let $z\in\C\setminus\{0\}$ be arbitrary. Let
\begin{equation}
\label{eqn:T-11}
\begin{split}
T:=&~\{n\in\N:\text{$n=s\cdot p^k$ for some $s\in\N$ with $\gcd(s,p)=1$}\}
\\
=&~\bigcup_{a=1}^{p-1}p^{k}((p\N\cup\{0\})+a)
\end{split}
\end{equation}
and
\begin{equation}
\label{eqn:S-11}
S:=\N\setminus T.
\end{equation}
Note that $S$ is a multiplicative set.
Clearly,
\begin{equation}
\label{eq:intst-11}
E_g\cap S = E_f\cap S
\qquad\text{and}\qquad
R_g\cap S = R_f\cap S.
\end{equation}

Define
$$
E_f':=
\begin{cases}
\left\{n\in\N: f(n)= \tfrac{z f(p^k)}{g(p^k)}\right\},&\text{if}~g(p^k)\neq 0;
\\
\emptyset,&\text{if $g(p^k)=0$}
\end{cases}
$$
and
$$
R_f':=
\begin{cases}
\left\{n\in\N: f^m(n)= \left(\tfrac{z f(p^k)}{g(p^k)}\right)^m\right\},&\text{if}~g(p^k)\neq 0;
\\
\emptyset,&\text{if $g(p^k)=0$.}
\end{cases}
$$
If $g(p^k)\neq 0$ then, by assumption, $E'_f$ is uniform relative to $R'_f$ and $cd(E'_f)=d(R'_f)$.
On the other hand, if $g(p^k)=0$ then $E'_f=R'_f=\emptyset$ and hence it is trivially satisfied that $E'_f$ is uniform relative to $R'_f$ and $cd(E'_f)=d(R'_f)$.

Let $n\in T$ be arbitrary and write $n=s\cdot p^k$ with $\gcd(p,s)=1$.
If $g(p^k)\neq 0$ then
$$
g(n)=z~\Leftrightarrow~g(s) = \tfrac{z}{g(p^k)}~\Leftrightarrow~f(s) = \tfrac{z}{g(p^k)}~\Leftrightarrow~f(n) = \tfrac{zf(p^k)}{g(p^k)}.
$$
If $g(p^k)= 0$ then $g(n)=z$ holds for no $n\in T$, because $z\neq 0$.
This proves that
\begin{equation}
\label{eq:intst-12}
E_g\cap T =  E_f'\cap T.
\end{equation}
An analogous calculation shows that 
\begin{equation}
\label{eq:intst-13}
R_g\cap T = R_f'\cap T.
\end{equation}

Combining \eqref{eq:intst-11}, \eqref{eq:intst-12} and \eqref{eq:intst-13} we obtain
\begin{eqnarray}
\label{eq:s121}
E_{g}&=&\Big(E_{f}\cap S\Big)\cup\Big(E_{f}'\cap T\Big),
\\
\label{eq:s122}
R_{g}&=&\Big(R_{f}\cap S\Big)\cup\Big(R_{f}'\cap T\Big).
\end{eqnarray}

Our goal is to show that the function $d(R_{g})\1_{ E_{g}}-d(E_{g})\1_{ R_{g}}$ is uniform. It follows from \cref{cor:multiplicative-fibers-have-density} that the density of $R_g$ exists. If $d(R_g)=0$ then the $\Vert\cdot\Vert_B$-norm of $d(R_{g})\1_{ E_{g}}-d(E_{g})\1_{ R_{g}}$ equals $0$ and hence this function is uniform for trivial reasons. We can therefore assume without loss of generality that $d(R_g)>0$.

Since $\1_S$ is a $\{0,1\}$-valued multiplicative function, we deduce from \cref{ren:0-1-valued-RAP} that $S$ is a rational set.
Moreover, $R_{f}\cap S=\{n\in \N: f^m(n)\1_S(n)=z^m\}$ and therefore the density $d\big(R_{f}\cap S\big)$ exists by \cref{cor:multiplicative-fibers-have-density}.
Similarly $d(E_f\cap S)$ exists. Now, by \eqref{eq:intst-11} and \cref{lem:ST-homogeneity-of-uniformity-within-rational-sets} (applied to $E_f\subset R_f$ and $S$), we obtain that the function
\begin{equation*}
d(R_{g}\cap S)\1_{E_{f}\cap S}-d(E_{g}\cap S)\1_{R_{f}\cap S}=\frac{d(R_{g}\cap S)}{d(R_{g})}\Big(d(R_{g})\1_{E_{f}\cap S}-d(E_{g})\1_{R_{f}\cap S}\Big)
\end{equation*}
is uniform.
From this we conclude that
\begin{equation}
\label{eqn:uf-S}
\Big(d(R_{g})\1_{E_{f}}-d(E_{g})\1_{R_{f}}\Big)\cdot\1_S
\end{equation}
is also uniform. Also, from \eqref{eq:ref-1-1} and $d(R_{f})=cd(E_{f})$ we get $d\big(R_{f}\cap S\big)=cd\big(E_{f}\cap S\big)$.

Analogous to the way we proved that $d(R_f\cap S)$ exists, one can show that $d(R_f'\cap S)$ exists. It follows that $d(R_f'\cap T)=d\big(R_f'\setminus(R_f'\cap S)\big)=d(R_f')-d(R_f'\cap S)$ also exists. Additionally, since $S$ is rational, the set $\N\setminus S=T$ is rational.
Using the fact that $E'_f$ is uniform relative to $R'_f$ together with \eqref{eq:intst-12}, \eqref{eq:intst-13} and \cref{lem:ST-homogeneity-of-uniformity-within-rational-sets} (applied to $E_f'\subset R_f'$ and $T$) we deduce that $d\big(E_{f}'\cap T\big)$ exists and that
\begin{equation}
\label{eqn:uf-T}
\Big(d(R_{g})\1_{ E_{f}'}-d(E_{g})\1_{R_{f}'}\Big)\cdot\1_T
\end{equation}
is uniform. From \eqref{eq:ref-1-1} and $d(R_{f}')=cd(E_{f}')$ we obtain $d\big(R_{f}'\cap T\big)=cd\big(E_{f}'\cap T\big)$.

Since the sum of two uniform functions remains uniform (due to the triangle inequality for $\|\cdot\|_{U_{[N]}^s}$), we conclude by taking the sum of \eqref{eqn:uf-S} and \eqref{eqn:uf-T} and utilizing \eqref{eq:s121} and \eqref{eq:s122} that $d(R_{g})\1_{ E_{g}}-d(E_{g})\1_{ R_{g}}$ is uniform. Moreover, combining $d\big(R_{f}'\cap T\big)=cd\big(E_{f}'\cap T\big)$ and $d\big(R_{f}\cap S\big)=cd\big(E_{f}\cap S\big)$ with \eqref{eq:s121} and \eqref{eq:s122} we obtain $c d(E_g)=d(R_g)$.

It is straightforward to show that if $h$ is a Besicovitch rationally almost periodic function then for any $q\in \N$ so is
$$
h_0(n):=
\begin{cases}
h\left(\frac{n}{q}\right),&\text{if}~q\mid n
\\
0,&\text{otherwise}.
\end{cases}
$$
In particular, the function
$$
h_1(n):=
\begin{cases}
f^m\left(\frac{n}{p^k}\right),&\text{if}~p^k\mid n
\\
0,&\text{otherwise}
\end{cases}
$$
is Besicovitch rationally almost periodic.
Since $S$ and $T$ are rational sets, it follows that the functions $f^m\cdot\1_S$ and $h_1\cdot\1_{T}$ are Besicovitch rationally almost periodic. Note that any $n\in T$ satisfies $p^k\mid n$. Hence,
$$
h_3(n):=g^m(p^k) h_1\cdot\1_{T}=
\begin{cases}
g^m(p^k) f^m\left(\frac{n}{p^k}\right),&\text{if}~n\in T;
\\
0,&\text{otherwise},
\end{cases}
$$
is Besicovitch rationally almost periodic.
Therefore $g^m=f^m\cdot\1_S+h_3$ is Besicovitch rationally almost periodic.
\end{proof}

\begin{Lemma}
\label{lem:lifting-trick-3}
Let $P\subset\P$ with $\sum_{p\in\P\setminus P}\tfrac{1}{p}<\infty$ and let $m\in\N$ and $c\geq 1$. Let $f$ and $g$ be multiplicative functions and suppose $f(p)=g(p)$ for all $p\in\P\setminus P$.
Assume $f^m$ is Besicovitch rationally almost periodic and for every $z\in\C\setminus\{0\}$ the set $E_f:=E(f,z)$ is uniform relative to $R_{f}:=E(f^m,z^m)$ and $c d(E_f)=d(R_f)$.
Then $g^m$ is Besicovitch rationally almost periodic and for every $z\in\C\setminus\{0\}$ the set $E_g:=E(g,z)$ is uniform relative to $R_{g}:=E(g^m,z^m)$.
\end{Lemma}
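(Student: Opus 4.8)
The plan is to deduce \cref{lem:lifting-trick-3} from \cref{lem:lifting-trick-2} by interpolating between $f$ and $g$ one prime power at a time and then passing to a limit in the Besicovitch seminorm. Enumerate the prime powers at which $f$ and $g$ differ as $m_1,m_2,\dots$ (there are at most countably many; if only finitely many, the lemma already follows from finitely many applications of \cref{lem:lifting-trick-2}, so assume the list is infinite). Each $m_i$ is a power of a prime in $P$, so
\[
\sum_{i\geq1}\tfrac1{m_i}\ \leq\ \sum_{p\in P}\ \sum_{k\geq1}\tfrac1{p^k}\ =\ \sum_{p\in P}\tfrac1{p-1}\ <\ \infty ,
\]
the summability of $\sum_{p\in P}1/p$ being used here in an essential way. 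For $j\geq0$ let $g_j$ be the multiplicative function agreeing with $g$ at $m_1,\dots,m_j$ and with $f$ at every other prime power, so that $g_0=f$ and consecutive functions $g_{j-1}$ and $g_j$ differ at the single prime power $m_j$ and nowhere else. Writing $E_j:=E(g_j,z)$ and $R_j:=E(g_j^m,z^m)$, an induction on $j$ invoking \cref{lem:lifting-trick-2} at each step (the base case $j=0$ being the hypothesis on $f=g_0$) gives, for every $j$ and every $z\in\C\setminus\{0\}$, that $g_j^m$ is Besicovitch rationally almost periodic, $E_j$ is uniform relative to $R_j$, and $c\,d(E_j)=d(R_j)$.

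Now let $j\to\infty$. If $g_j(n)\neq g(n)$ then $p^{v_p(n)}\in\{m_{j+1},m_{j+2},\dots\}$ for some prime $p$, so $\{n:g_j(n)\neq g(n)\}\subseteq\bigcup_{i>j}m_i\N$ and hence $\overline{d}\big(\bigcup_{i>j}m_i\N\big)\leq\sum_{i>j}1/m_i\to0$ as $j\to\infty$. Because $f$ and $g$ are bounded, this forces $\|g_j^m-g^m\|_B\to0$, $\|\1_{E_j}-\1_{E_g}\|_B\to0$ and $\|\1_{R_j}-\1_{R_g}\|_B\to0$. Besicovitch rational almost periodicity passes to $\|\cdot\|_B$-limits (immediate from the triangle inequality for $\|\cdot\|_B$), so $g^m$ is Besicovitch rationally almost periodic. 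By \cref{cor:multiplicative-fibers-have-density} the densities $d(E_g)$ and $d(R_g)$ exist, and together with the elementary bound $|d(A_j)-d(A)|\leq\overline{d}(A_j\triangle A)$ (valid once $d(A)$ exists) this gives $d(E_j)\to d(E_g)$ and $d(R_j)\to d(R_g)$. Therefore
\[
d(R_j)\,\1_{E_j}-d(E_j)\,\1_{R_j}\ \longrightarrow\ d(R_g)\,\1_{E_g}-d(E_g)\,\1_{R_g}\qquad\text{in }\|\cdot\|_B ,
\]
and since every function on the left is uniform, \cref{prop:interplay-uniform-almost-periodic} part \ref{itm:prop:interplay-uniform-almost-periodic-a} shows the limit is uniform; as $E_g\subseteq R_g$ trivially, this is precisely the statement that $E_g$ is uniform relative to $R_g$.

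I expect the one genuinely delicate point to be the bookkeeping when a single prime $p\in P$ contributes infinitely many prime powers to the list $m_1,m_2,\dots$: one still interpolates one prime power at a time, and it is the estimate $\sum_i 1/m_i<\infty$ -- not merely $\sum_{p\in P}1/p<\infty$ -- that makes the tails $\sum_{i>j}1/m_i$ vanish; fixing the enumeration before the induction begins keeps this transparent. The remaining ingredients (the density bound $\overline{d}(\bigcup_{i>j}m_i\N)\leq\sum_{i>j}1/m_i$ and the continuity of density under $\overline{d}$-small symmetric differences) are routine.
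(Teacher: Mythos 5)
Your proof is correct and follows essentially the same route as the paper's: enumerate the prime powers where $f$ and $g$ disagree, interpolate one prime power at a time via \cref{lem:lifting-trick-2} (carrying the constant $c$ through the induction), and pass to the $\|\cdot\|_B$-limit using the tail bound $\overline{d}\big(\bigcup_{i>j}m_i\N\big)\leq\sum_{i>j}1/m_i$ together with part \ref{itm:prop:interplay-uniform-almost-periodic-a} of \cref{prop:interplay-uniform-almost-periodic}. The only (harmless) slip is the assertion that every $m_i$ is a power of a prime in the exceptional set: the hypothesis constrains $f$ and $g$ only at \emph{first} powers of primes, so $m_i=p^k$ with $k\geq 2$ can occur for any prime $p$ --- but since $\sum_{p\in\P}\sum_{k\geq2}p^{-k}<\infty$ unconditionally, the crucial estimate $\sum_i 1/m_i<\infty$ survives.
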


\begin{proof}
Let $\Omega:=\{(p,k)\in\P\times \N: f(p^k)\neq g(p^k)\}$.
Note that $\Omega$ can be turned into a linearly ordered set $(\Omega,\prec)$ using the relation
$$
(p,k)\prec (q,\ell)\quad\Leftrightarrow\quad p^k<q^\ell.
$$
Let $(p_1,k_1)\prec (p_2,k_2)\prec\dots$ be an enumeration of $\Omega$.

We now define inductively a sequences of multiplicative functions $f_0,f_1,f_2,\ldots$ as follows. First, we let $f_0:=f$; then we define
$$
f_{i+1}(p^k):=
\begin{cases}
f_i(p^k),&\text{if}~(p,k)\neq (p_{i+1},k_{i+1});
\\
g(p^k),&\text{otherwise}.
\end{cases}
$$
Note that for a fixed $n\in\N$ there exists $i_n$ such that $f_i(n)=g(n)$ for all $i\geq i_n$. 

Since $\sum_{p\in\P\setminus P}\tfrac{1}{p}<\infty$, it follows that
$$
\sum_{(p,k)\in\Omega}\tfrac{1}{p^k}<\infty.
$$
Also,
\begin{multline}
\label{eqn:d-hi-f-0}
\overline{d}\Big(\big\{n\in\N: g^m(n)\neq f^m_i(n)\big\}\Big)\\
\leq\overline{d}\Big(\big\{n\in\N: g(n)\neq f_i(n)\big\}\Big)
\leq\overline{d}\left(\bigcup_{(p,k)\in\Omega\atop (p_i,k_i)\prec(p,k)} p^k\N\right)\leq \sum_{(p,k)\in\Omega\atop (p_i,k_i)\prec(p,k)}\tfrac{1}{p^k}.
\end{multline}
It follows that $\lim_{i\to\infty}\|g-f_i\|_B=0$ and $\lim_{i\to\infty}\|g^m-f_i^m\|_B=0$.

Let $z\in \C\setminus\{0\}$ be arbitrary.
Recall that, by assumption, $E_f$ is uniform relative to $R_f$. Define $E_{f_i}:=E(f_i, z)$ and $R_{f_i}:=E(f_i^m,z^m)$. It clearly follows from \cref{lem:lifting-trick-2} and induction on $i$ that $f_i^m$ is Besicovitch almost periodic, $E_{f_i}$ is uniform relative to $R_{f_i}$ and $cd(E_{f_i})=d(R_{f_i})$. Therefore, $g^m$ is Besicovitch almost periodic, because $\lim_{i\to\infty}\|g^m-f_i^m\|_B=0$.
We deduce from \eqref{eqn:d-hi-f-0} that
\begin{equation}
\label{eqn:d-hi-f-3}
\lim_{i\to\infty}\overline{d}\big(E_{g}\triangle E_{f_i}\big)=0
\qquad\text{and}\qquad
\lim_{i\to\infty}\overline{d}\big(R_{g}\triangle R_{f_i}\big)=0,
\end{equation}
where $E_{g}:=E(g,z)$ and $R_{g}:=E(g^m,z^m)$.
Hence
$$
\left\|\Big(d(R_{g})\1_{E_{g}}-d(E_{g})\1_{R_{g}}\Big)-\Big(d(R_{f_i})\1_{E_{f_i}}-d(E_{f_i})\1_{R_{f_i}}\Big)\right\|_B\xrightarrow[]{i\to\infty}0.
$$
Finally, using part \ref{itm:prop:interplay-uniform-almost-periodic-a} of \cref{prop:interplay-uniform-almost-periodic} we deduce that $E_{g}$ is uniform relative to $R_{g}$. This finishes the proof.
\end{proof}

\begin{Lemma}
\label{lem:f-to-the-k-is-chi}
Let $m\in\N$, $f$ a multiplicative function and $\chi$ a Dirichlet character. Assume that $f^j$ is aperiodic for all $j\in\{1,2,\ldots,m-1\}$ and that $f^m=\chi$. Let $z\in\C$ and set $E:=E(f,z)$ and $R:=E(\chi,z^m)$. Then $E$ is uniform relative to $R$ and $m d(E)=d(R)$. 
\end{Lemma}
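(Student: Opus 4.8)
The plan is to expand the level set $E=E(f,z)$ according to the value of $f$ on the ``$\chi$-part'' of $n$. Write each $n\in\N$ uniquely as $n=ab$ where $a$ is composed only of primes dividing the modulus of $\chi$ (the ``bad'' primes) and $\gcd(b,q)=1$ where $q$ is the modulus. Since $\chi(n)=\chi(b)$ and $f^m=\chi$, we have $f(n)^m=\chi(b)$, so $f(n)$ is one of finitely many $m$-th roots of $\chi(b)$; and $f(n)=f(a)f(b)$. The set of possible values of $f(a)$ is countable, so for each such value $w$ the condition $f(n)=z$ becomes $f(b)=z/w$ on the set of $n$ whose bad-prime part $a$ satisfies $f(a)=w$. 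I would thus decompose $\1_E$ (and similarly $\1_R$) into countably many pieces indexed by these values $w$, each piece supported on a set of the form $\{n : a(n)\in A_w\}$ intersected with a fiber of $f$ on the coprime part.

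**Main computation.** The heart of the matter is to show that on each piece, the relatively uniform estimate holds with the right density ratio $m$. Fix $w$ with $f(a)=w$ achievable, and let $S_w=\{n : a(n) \text{ lies in the prescribed set}\}$, which is a rational set (indeed controlled by a $\{0,1\}$-valued multiplicative function, hence Besicovitch rationally almost periodic by \cref{ren:0-1-valued-RAP}). On $S_w$, $n\in E$ iff $f(b(n))=z/w=:z'$, and $n\in R$ iff $\chi(b(n))=(z')^m=z^m/w^m$. The key point is that, {\em within the set where $\chi$ takes a fixed value $\zeta$ on the coprime part}, the coprime parts $b$ for which $f(b)=z'$ (one specific $m$-th root of $\zeta$) form a set of density exactly $\tfrac1m$ of the density of the set where $\chi(b)=\zeta$, and the corresponding indicator difference is uniform. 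This is precisely the statement that the multiplicative function $\psi:=f\cdot\overline{(\text{any fixed }m\text{-th root of }\chi)}$ — more carefully, that $f$ restricted appropriately — distributes its $m$-th-root values equidistributed over the relevant roots of unity; the mechanism for this is that $f^j$ is aperiodic, hence uniform by \cref{thm:tAU}, for $1\le j\le m-1$. Concretely, writing $\1_{E}$ on the coprime fiber via a discrete Fourier expansion $\1_{\{f(b)=z'\}}=\tfrac1m\sum_{j=0}^{m-1}(z')^{-j}f(b)^j$ over the fiber $\{\chi(b)=\zeta\}$, the $j=0$ term gives $\tfrac1m\1_{\{\chi(b)=\zeta\}}$ and each $j\ge1$ term is a uniform function (it is $f^j$ times a rational, up to the harmless coprimality restriction and the character correction, using \cref{prop:interplay-uniform-almost-periodic} parts \ref{itm:prop:interplay-uniform-almost-periodic-b0} and \ref{itm:prop:interplay-uniform-almost-periodic-b}). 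Multiplying through by $m$ and subtracting $\1_{\{\chi(b)=\zeta\}}$ leaves a uniform function, which is exactly $m\1_E - \1_R$ restricted to that fiber. Summing over the finitely many values $\zeta$ in the image of $\chi$, and over the (countably many, but $\|\cdot\|_B$-summable with tails going to zero) pieces $S_w$, and invoking \cref{prop:interplay-uniform-almost-periodic} part \ref{itm:prop:interplay-uniform-almost-periodic-a} to pass to the limit, yields that $m\1_E - \1_R$ is uniform, i.e. $d(R)\1_E - d(E)\1_R$ is uniform with $md(E)=d(R)$.

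**Main obstacle.** The routine part is the Fourier expansion over the fiber; the subtle part is bookkeeping the interaction between the ``bad-prime'' part $a$ and the coprime part $b$, and making sure the density identities $md(E)=d(R)$ survive the decomposition and recombination. In particular one must verify that all the densities involved actually exist — this is where \cref{cor:multiplicative-fibers-have-density} (Ruzsa) is used, applied to the various multiplicative functions $\1_{S_w}\cdot(\text{fiber indicator})$ — and that the decomposition of $\1_E$ into pieces over $w$ converges in $\|\cdot\|_B$, so that the tail pieces are negligible and \cref{prop:interplay-uniform-almost-periodic}\ref{itm:prop:interplay-uniform-almost-periodic-a} applies. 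I expect the cleanest route is actually to first handle the case where $f$ itself is completely multiplicative and $f(p)=0$ or $f(p)^m=\chi(p)$ at every prime with no bad primes at all (so $b=n$), prove the fiber equidistribution there directly from uniformity of $f^j$, and then bolt on the bad primes using the same lifting philosophy as in \cref{lem:lifting-trick-2} — one bad prime power at a time, each step being a rational modification. That reduces the obstacle to a clean single statement: {\em if $h$ is multiplicative with $h^m = \chi$ and $h^j$ aperiodic for $1\le j\le m-1$, then for each $m$-th root $z'$ of each value $\zeta$ of $\chi$, the function $m\1_{\{h=z'\}} - \1_{\{\chi = \zeta\}}\cdot\1_{\{h=z'\}\text{-compatible}}$ is uniform}, which is immediate from the Fourier expansion.
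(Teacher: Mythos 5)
Your core mechanism is the paper's proof: since $E\subset R$ and, for $n\in R$ with $z\neq 0$, the number $z^{-1}f(n)$ is an $m$-th root of unity, one has the pointwise identity $\1_E=\1_R\cdot\bigl(\frac1m\sum_{j=0}^{m-1}z^{-j}f^j\bigr)$ on all of $\N$; the $j=0$ term contributes $\frac1m\1_R$, and each $j\geq 1$ term is uniform by \cref{thm:tAU} together with \cref{prop:interplay-uniform-almost-periodic}\ref{itm:prop:interplay-uniform-almost-periodic-b} (as $\1_R$ is periodic, hence Besicovitch rationally almost periodic), so $\1_E-\frac1m\1_R$ is uniform and, having zero mean, yields $m\,d(E)=d(R)$. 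What you single out as the main obstacle --- bookkeeping the bad-prime part $a(n)$ and controlling a countable $\|\cdot\|_B$-summable decomposition over the values $w=f(a)$ --- is vacuous here: the hypothesis $f^m=\chi$ forces $f(n)^m=\chi(n)=0$, hence $f(n)=0$, for every $n$ with $\gcd(n,q)>1$, so for $z\neq 0$ both $E$ and $R$ already lie inside the integers coprime to the modulus, the only surviving piece of your decomposition is $a=1$, $w=1$, and no passage to the limit via \cref{prop:interplay-uniform-almost-periodic}\ref{itm:prop:interplay-uniform-almost-periodic-a} is needed; the factor $\1_R$ in the displayed identity performs globally all the restriction you engineer by hand. (The lifting machinery of \cref{lem:lifting-trick-2} is genuinely needed in the proof of \cref{thm:ST}, where $g^{k_\G}$ is only close to a character rather than equal to one, but not in this lemma.) Two small omissions: the case $z=0$ needs a separate (trivial) line, since there $E=R$ because $f^m=\chi$; and the existence of $d(E)$ and $d(R)$ is either quoted from \cref{cor:multiplicative-fibers-have-density} or read off from periodicity of $R$ and the zero mean of the uniform function $\1_E-\frac1m\1_R$.
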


\begin{proof}
First, using \cref{thm:tAU}, we deduce that for all $j\in\{1,2,\ldots,m-1\}$ the function $f^j$ is uniform.
Also, note that the density of $E$ and $R$ exists, due to \cref{cor:multiplicative-fibers-have-density}.
It remains to show that the function
\begin{equation}
\label{eqn:ST-proof-of-ST-2-old}
d\left(R\right)\1_{E}-d\left(E\right)\1_{R}
\end{equation}
is uniform.

If $z=0$ then $R=E$ (because $f^m=\chi$) and so the function $d\left(R\right)\1_{E}-d\left(E\right)\1_{R}$ is constant $0$ and hence uniform.
We can therefore assume without loss of generality that $z\neq 0$.

By assumption, for any $n\in R$ we have $f^m(n)=\chi(n)=z^m$. 
Therefore, the number $z^{-1}f(n)$ is an $m$-th root of unity for any $n\in R$. It follows that for all $n\in R$,
\[
\frac{1}{m}\sum_{j=0}^{m-1} z^{-j}f^j(n)=
\begin{cases}
1,& \text{if $f(n)=z$};\\
0,& \text{otherwise}.
\end{cases}
\]
So,
\[
\1_{E}= \1_{R}\cdot
\left(\frac{1}{m}\sum_{j=0}^{m-1} z^{-j}f^j \right)
\]
and after rearranging we get 
\begin{equation}
\label{eqn:ST-proof-of-ST-3-old}
\1_{E}- \frac{1}{m}\1_{R}= \1_{R}\cdot
\left(\frac{1}{m}\sum_{j=1}^{m-1} z^{-j}f^j \right).
\end{equation}

Since $\1_R$ is Besicovitch rationally almost periodic and $f^j$ is uniform for $j=1,...,m-1$, by \cref{prop:interplay-uniform-almost-periodic} \ref{itm:prop:interplay-uniform-almost-periodic-b}, we deduce that the right hand side of \eqref{eqn:ST-proof-of-ST-3-old} is uniform.
This implies that
\begin{equation}
\label{eqn:ST-proof-of-ST-8-old}
\1_{E}- \frac{1}{m}\1_{R}
\end{equation}
is uniform as well.
Since any uniform function has zero mean, it follows that
$d(E)m=d\left(R\right)$ and so the function in
\eqref{eqn:ST-proof-of-ST-2-old} is a constant multiple of the function in \eqref{eqn:ST-proof-of-ST-8-old} and hence also uniform. 
\end{proof}


\begin{proof}[Proof of \cref{thm:ST}]
Let  $\G$ denote the concentration group of $g$ and let 
$k_\G$ and $\chi_\G$ be as in \cref{def:ST-concentration-character}.
Define $\Omega_\G:=\{(p,k)\in\P\times\N: g(p^k)\in\G,~g^{k_\G}(p^k)=\chi_\G(p^k)\}$.
Since the pair $(k_\G,\chi_\G)$ satisfies \eqref{eqn:ST-concentration-character-1}, we have that
$
\sum_{(p,k)\notin \Omega_\G}\frac1{p^k}<\infty.
$

Given $(p,k)\notin\Omega_\G$ let $\xi_{(p,k)}$ be any complex number that satisfies $\xi_{(p,k)}^{k_\G}=\chi_\G(p^k)$. 
Define a new multiplicative function $f$ via
$$
f(p^k):=
\begin{cases}
g(p^k), &\text{if}~(p,k)\in\Omega_\G;
\\
\xi_{(p,k)},&\text{otherwise}.
\end{cases}
$$
Note that $f$ satisfies the functional equation
\begin{equation}
\label{eqn:ST-proof-of-ST-1}
f^{k_\G}=\chi_\G.
\end{equation}

We claim that $\mathbb{D}(f^j,\chi\cdot n^{it})=\infty$ for all $j\in\{1,2,\ldots,k_\G-1\}$,
for all $t\in\R$ and for all Dirichlet characters $\chi$. To verify this claim
we have to distinguish between the case $t=0$ and the case $t\in\R\setminus\{0\}$.

The case $t=0$ follows from the minimality assumption on $k_\G$: $\mathbb{D}(g^j,\chi)=\infty$ for each $j=1,...,k_G-1$ and each Dirichlet character $\chi$. Since $\sum_{p\in\P\atop f(p)\neq g(p)}\frac{1}{p}<\infty$, it follows from the triangle inequality for $\mathbb{D}$ that $\mathbb{D}(f^j,\chi)=\infty$ for each $j=1,...,k_G-1$ and each Dirichlet character $\chi$.

For the case $t\neq 0$ we give a proof by contradiction. Let us assume that there are $j\in\{1,\ldots,k_\G\}$, a Dirichlet character $\chi$ and a number 
$t\in\R\setminus\{0\}$ such that $\mathbb{D}(f^j,\chi\cdot n^{it})<\infty$.
Using part \ref{item:rem:basic-properties-of-D-3} of \cref{rem:basic-properties-of-D} it follows that also
$\mathbb{D}(f^{j|\G|},\chi^{|\G|} \cdot n^{it|\G|})<\infty$.
However, for all primes $p$ with $g(p)\in \G$ we have that $g^{j|\G|}(p)=f^{j|\G|}(p)=1$. Hence, $\mathbb{D}(f^{j|\G|},\chi^{|\G|} \cdot n^{it|\G|})<\infty$ implies $\mathbb{D}(\overline{\chi}^{|\G|},n^{it|\G|})<\infty$. This contradicts the statement of \cref{DirArch}.

Since $\mathbb{D}(f^j,\chi\cdot n^{it})=\infty$ for all
$j\in\{1,2,\ldots,k_\G-1\}$, all $t\in\R$ and all Dirichlet
characters $\chi$, it follows from \cref{prop:l10} that $f^j$ is aperiodic. It therefore follows from \cref{lem:f-to-the-k-is-chi} that for all $z\in \C\setminus\{0\}$ the set $E_{f}:=E(f,z)$ is uniform relative to $R_{f}:=E(\chi_\G,z^{k_\G})$ and $k_\G d(E_f)=d(R_f)$.

Finally, observe that $f$ and $g$ are two multiplicative functions that satisfy the conditions of \cref{lem:lifting-trick-3} (with $c=m=k_\G$), from which we conclude that $g^{k_\G}$ is Besicovitch rationally almost periodic and for every $z\in \C\setminus\{0\}$ the set $E_{g}:=E(g,z)$ is uniform relative to $R_{g}:=E(g^{k_\G},z^{k_\G})$.
\end{proof}

\subsection{A proof of \cref{thm:structure-theorem}}
\label{sec:poF}

In this subsection we give a proof of \cref{thm:structure-theorem}.
The proof is based on the idea that any multiplicative function $f$ either behaves like a concentrated multiplicative function, in which case \cref{thm:structure-theorem} can be derived from \cref{thm:ST}, or all sets of the form $E:=\{n\in\N:f(n)=z\}$ with $z\neq 0$ have zero density.
This only leaves the case $z=0$, which can be taken care of by using the characterization of Besicovitch rationally almost periodic multiplicative functions due to Daboussi and Delange discussed in Subsection~\ref{subsec:AP}.

We will need the following lemma.

\begin{Lemma}
\label{lem:common-unit-value}
Suppose $E_1,E_2\in\Dzero$ and $0<d(E_1),d(E_2)<1$. Then $d(E_1\triangle E_2)=0$ if and only if $E_1=E_2$.\footnote{Note that if $d(E_1)=d(E_2)=0$ or $d(E_1)=d(E_2)=1$ then $d(E_1\triangle E_2)=0$ does not necessarily imply $E_1=E_2$. Take for instance $E_1=\{1,2\}$ and $E_2=\{1,3\}$ or $E_1=\N\setminus\{1,2\}$ and $E_2=\N\setminus\{1,3\}$, which are sets belonging to $\Dzero$ because the functions $\1_{\{1,2\}}$ and $\1_{\{1,3\}}$ are multiplicative.}
\end{Lemma}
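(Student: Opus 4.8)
The plan is to prove the non‑trivial implication; the converse ($E_1=E_2\Rightarrow d(E_1\triangle E_2)=0$) is immediate. So assume $d(E_1\triangle E_2)=0$. Since $E_1\setminus E_2$ and $E_2\setminus E_1$ are both contained in $E_1\triangle E_2$, we get $\overline{d}(E_1\setminus E_2)=\overline{d}(E_2\setminus E_1)=0$, and it suffices to show these two difference sets are in fact \emph{empty}. Write $E_i=E(f_i,z_i)$ with $f_i$ multiplicative and $z_i\in\C$. The idea is to manufacture from each $f_i$ a nowhere vanishing multiplicative function $g_i\colon\N\to\C\setminus\{0\}$ together with a set $Z_i\subseteq im(g_i)$ such that $E_i=\{n:g_i(n)\in Z_i\}$, to pass to the $\C^2$‑valued multiplicative function $\vec{g}:=(g_1,g_2)$, and then to invoke Ruzsa's \cref{thm:ST-Ruzsa-3.10}; the crucial feature to exploit is that for a \emph{concentrated} function \emph{every} level set whose index lies in the image has \emph{strictly positive} density.

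First I would construct the $g_i$, splitting on whether $z_i=0$. If $z_i\neq 0$, then $d(E(f_i,z_i))>0$ with $z_i\neq 0$, so \cref{cor:density->concentrated function} provides a concentrated nowhere vanishing $g_i$ with $E(f_i,z_i)=E(g_i,z_i)$; put $Z_i:=\{z_i\}$. If $z_i=0$, then $E_i\neq\emptyset$ (since $d(E_i)>0$), so $f_i$ vanishes on some prime power; fix a complex number $w_i$ that is not a root of unity and set $g_i(p^k):=1$ if $f_i(p^k)\neq 0$ and $g_i(p^k):=w_i$ if $f_i(p^k)=0$. Then $g_i(n)=w_i^{a}$, where $a$ is the number of prime powers exactly dividing $n$ at which $f_i$ vanishes, so $g_i(n)=1$ iff $f_i(n)\neq 0$; hence $\N\setminus E_i=E(g_i,1)$ and we may take $Z_i:=im(g_i)\setminus\{1\}$. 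In both cases $g_i$ is nowhere vanishing and has a single level set of positive density: $E(g_i,z_i)$ of density $d(E_i)>0$ when $z_i\neq 0$, and $E(g_i,1)$ of density $1-d(E_i)>0$ when $z_i=0$; this last point is the sole place the hypothesis $d(E_i)<1$ enters. By the second assertion of \cref{thm:ST-Ruzsa-3.10}, read contrapositively, each $g_i$ is therefore concentrated.

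Now $\vec{g}=(g_1,g_2)\colon\N\to(\C\setminus\{0\})^2$ is concentrated by \cref{vector}, so \cref{thm:ST-Ruzsa-3.10}\ref{itm_p1} gives $d\big(E(\vec{g},\vec{z})\big)>0$ for every $\vec{z}\in im(\vec{g})$. Finally,
\[
E_1\setminus E_2=\{n:\, g_1(n)\in Z_1,\, g_2(n)\notin Z_2\}=\bigsqcup_{\substack{(v,w)\in im(\vec{g})\\ v\in Z_1,\, w\notin Z_2}}E(\vec{g},(v,w)),
\]
a disjoint union in which every summand has strictly positive density. Were the index set nonempty, picking $(v,w)$ in it would yield $0<d(E(\vec{g},(v,w)))\leq\overline{d}(E_1\setminus E_2)=0$, a contradiction. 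Hence $E_1\setminus E_2=\emptyset$, and symmetrically $E_2\setminus E_1=\emptyset$, so $E_1=E_2$.

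The routine parts are the two substitutions producing the $g_i$ and the bookkeeping with level sets of $\vec{g}$. The one place needing care — and the main obstacle — is the case $z_i=0$: a level set $E(f_i,0)$ of positive density need not itself be (equivalent modulo density to) a level set of a nowhere vanishing function, so one must instead realize its \emph{complement} $\{n:f_i(n)\neq 0\}$ as such a level set, which is precisely what the ``$1$ or $w_i$'' substitution accomplishes, and it is exactly here that the assumption $d(E_i)<1$ becomes indispensable (otherwise the complement could be negligible and Ruzsa's theorem would give nothing). Beyond this, the argument is just Ruzsa's theorem applied in dimension $r=2$ — the one new ingredient relative to the $r=1$ uses elsewhere in the paper.
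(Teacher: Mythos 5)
Your proof is correct, but it takes a genuinely different route from the paper's. The paper argues pointwise: it fixes $n_0\in E_1\setminus E_2$, passes (after the same kind of case analysis on whether $z_i=0$) to concentrated functions, and then builds an auxiliary modified function $\vec h$ — using the ``choose $\vec y$ with $(\vec y^{\,n}\cdot im(\vec g))\cap im(\vec g)=\emptyset$'' trick — whose level set $E(\vec h,(1,1))$ is a positive-density set of $m$ coprime to $n_0$ with $\vec g(n_0m)=\vec g(n_0)$, so that $n_0E(\vec h,(1,1))\subset E_1\setminus E_2$; this forces it into three separate cases according to which of $z_1,z_2$ vanish. You instead encode each $E_i$ once and for all as $\{n:g_i(n)\in Z_i\}$ for a nowhere-vanishing concentrated $g_i$ (handling $z_i=0$ by realizing the \emph{complement} as a level set, which is exactly where $d(E_i)<1$ is needed, just as in the paper), and then partition $E_1\setminus E_2$ globally into level sets of the concentrated pair $\vec g=(g_1,g_2)$, each of which has positive density by \cref{thm:ST-Ruzsa-3.10}\ref{itm_p1} and \cref{vector}. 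This buys a single unified endgame with no auxiliary $\vec h$ and no dependence on a particular element $n_0$; the paper's construction, by contrast, is the one that generalizes to statements where one genuinely needs a positive-density set of $m$ with $n_0m\in E_1\setminus E_2$ for a \emph{prescribed} $n_0$. Two cosmetic remarks: you should say $w_i$ is \emph{nonzero} and not a root of unity (the literal reading admits $w_i=0$, which would break non-vanishing), and your appeal to part (2) of \cref{thm:ST-Ruzsa-3.10} in contrapositive form to conclude that the $z_i=0$ substitute $g_i$ is concentrated is exactly the argument already used in the proof of \cref{cor:density->concentrated function}, so it is safely available.
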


\begin{proof}
Clearly $E_1=E_2$ implies $d(E_1\triangle E_2)=0$. To prove the other direction we assume that there exists $n_0\in E_1$ with $n_0\notin E_2$ and show that this leads to a contradiction with $d(E_1\triangle E_2)=0$.

By definition of $\Dzero$ there exist multiplicative functions $f_1,f_2\colon \N\to\C$ and numbers $z_1,z_2\in\C$ such that $E_1=E(f_1,z_1)$ and $E_2=E(f_2,z_2)$.
We have to distinguish three cases, the case $z_1=z_2=0$, the case $z_1\neq0$ and $z_2\neq 0$ and finally the case $z_1=0$ and $z_2\neq 0$. We remark that the case $z_1\neq0$ and $z_2= 0$ is analogous to the case $z_1=0$ and $z_2\neq 0$ and is therefore omitted.

If $z_1=z_2=0$ then for $i\in\{1,2\}$ we define $g_i(n)=0$ if $f_i(n)=0$ and $g_i(n)=1$ if $f_i(n)\neq 0$. It is clear that $g_i=\1_{\N\setminus E_i}$ and $E_i=E(g_i,0)$. Since $d(E_i)<1$, we have that $\|g_i\|_B>0$ and hence, in view of \cref{lem:lH2-C}, the sets $P_i:=\{p\in\P: g_i(p)=1\}$ satisfy $\sum_{p\in \P\setminus P_i}\frac{1}{p}<\infty$.
Let $P$ denote the set of all primes that belong to both $P_1$ and $P_2$ and that do not divide $n_0$. Let $S_P\subset \N$ be defined as
\begin{equation}\label{eqn:S_P}
S_P:=\left\{n\in\N:\text{there exist distinct $p_1,\ldots, p_t\in P$ such that $n=p_1\cdot\ldots\cdot p_t$}\right\}.
\end{equation}
Then by \cref{lem:lH2-C} we have $d(S_P)>0$.
Since $n_0\in E_1$ but $n_0\notin E_2$ and $n_0$ is coprime to all numbers in $S_P$, it follows that $E_1\setminus E_2$ contains the set $n_0 S_P$. In particular, $d(E_1\setminus E_2)\geq d(n_0S_P)>0$. This, however, contradicts $d(E_1\triangle E_2)=0$.

Next, assume $z_1\neq0$ and $z_2\neq 0$. Using \cref{cor:density->concentrated function} we can find two concentrated multiplicative functions $g_1,g_2\colon \N\to\C\setminus\{0\}$ such that $E_1=E(g_1,z_1)$ and $E_2=E(g_2,z_2)$. Define $\vec{g}:=(g_1,g_2)$ and let $im(\vec{g})\subset(\C\setminus\{0\})^2$ denote the image of $\vec{g}$.
Since $g_1$ and $g_2$ are concentrated multiplicative functions, also $\vec{g}$ is concentrated, see Remark~\ref{vector}. We now use an argument similar to the one used in the proof of \cref{cor:density->concentrated function}. Choose $\vec{y}\in (\C\setminus\{0\})^2$ such that $(\vec{y}^n\cdot im(\vec{g}))\cap im(\vec{g})=\emptyset$ for all $n\in\N$. We define a new multiplicative function $\vec{h}=(h_1,h_2)$ via
$$
\vec{h}(p^k):=
\begin{cases}
\vec{g}(p^k),&\text{if } p \nmid n_0
\\
\vec{y},&\text{if }p \mid n_0
\end{cases},
\qquad\forall k\in\N,~\forall p\in\P.
$$
It is straightforward to verify that $\vec{g}(n)=\vec{h}(n)$ if and only if $\gcd(n,n_0)=1$ and $\vec{h}(n)\notin im(\vec{g(n)})$ for all $n$ with $\gcd(n,n_0)>1$. Since $\vec{g}$ satisfies \ref{itm_a}, \ref{itm_b} and \ref{itm_c} of \cref{thm:ST-Ruzsa-3.10}, also $\vec{h}$ satisfies them because the number of primes $p$ for which $\vec{g}(p)\neq \vec{h}(p)$ is finite. Thus, $\vec{h}$ is concentrated, whence the set $E(\vec{h},(1,1))=\{n\in\N: h_1(n)=1~\text{and}~h_2(n)=1\}$ has positive density by Remark~\ref{vector}. Note that $\vec{h}(n)=(1,1)$ if and only if $\vec{g}(n)=(1,1)$ and $\gcd(n,n_0)=1$. Hence
$$
E(\vec{h},(1,1))=\{n\in\N: g_1(n)=1,~g_2(n)=1,~\gcd(n,n_0)=1\}.
$$
We obtain that $g_1(n_0 m)=g_1(n_0)$ and $g_2(n_0 m)=g_2(n_0)$ for all $m\in E(\vec{h},(1,1))$. In particular $n_0 E(\vec{h},(1,1))\subset E_1\setminus E_2$, which contradicts $d(E_1\triangle E_2)=0$.

Finally, we deal with the case $z_1=0$ and $z_2\neq 0$. Let $g$ denote the multiplicative function defined as $g_1(n)=0$ if $f_1(n)=0$ and $g_1(n)=1$ if $f_1(n)\neq 0$.
Let $P:=\{p\in\P: p\nmid n_0,~g_1(p)=1\}$ and let $S_P\subset \N$ be defined as in \eqref{eqn:S_P}. Arguing as in the case $z_1=z_2=0$ above one can show that $d(S_P)>0$.
Next, using \cref{cor:density->concentrated function}, we can find a concentrated multiplicative function $g_2\colon \N\to\C\setminus\{0\}$ such that $E_2=E(g_2,z_2)$.
Then, using arguments similar to the ones utilized in the previous paragraph, we first find $y\in \C\setminus\{0\}$ such that $(y^n\cdot im(g_2))\cap im(g_2)=\emptyset$ for all $n\in\N$ and then define a multiplicative function $h\colon\N\to\C\setminus\{0\}$ via
$$
h(p^k):=
\begin{cases}
g_2(p),&\text{if } p \in P~\text{and }k=1
\\
\vec{y},&\text{if either }p \notin P~\text{or }k\geq2
\end{cases},
\qquad\forall k\in\N,~\forall p\in\P.
$$
It is straightforward to verify that $g_2(n)=h(n)$ if and only if $n\in S_P$ and $h(n)\notin im(g_2(n))$ for all $n$ which are either not squarefree or satisfy $p\mid n$ for some $p\in \P\setminus P$. Since $g_2$ is concentrated and $\sum_{p\in\P\setminus P}\frac{1}{p}<\infty$, $h$ is concentrated too. It follows from \cref{thm:ST-Ruzsa-3.10} that $E(h,1)$ has positive density.
Since $h(n)=1$ if and only if $g(n)=1$ and $n\in S_P$, we obtain that $g_1(n_0 m)=g_1(n_0)$ and $g_2(n_0 m)=g_2(n_0)$ for all $m\in E(h,1)\subset S_P$. In particular, $n_0 E(h,1)\subset E_1\setminus E_2$, which again contradicts $d(E_1\triangle E_2)=0$.
\end{proof}

\begin{proof}[Proof of \cref{thm:structure-theorem}]
Let $E\in\Dzero$ and suppose $d(E)>0$.
By definition there exists a multiplicative function $f$ such that $E=E(f,z)$.
Our goal is to find a set $R\in\Drat$ such that $E$ is uniform relative to $R$.
We distinguish two cases, $z=0$ and $z\neq 0$.

If $z=0$ then let $g$ be the multiplicative function defined as $g(n)=0$ if $f(n)=0$ and $g(n)=1$ if $f(n)\neq 0$. In view of \cref{ren:0-1-valued-RAP}, $g$ is Besicovitch rationally almost periodic. Also, $E=E(f,z)=E(g,z)$, which proves that the set $E$ belongs to $\Drat$. Since any set is uniform relative to itself, we can simply pick $E=R$ and are done.
 
Now assume $z\neq 0$. Using \cref{cor:density->concentrated function} we can find a concentrated multiplicative function $g\colon \N\to\C\setminus\{0\}$ such that $E=E(f,z)=E(g,z)$. According to \cref{thm:ST} there exist a Besicovitch rationally almost periodic multiplicative function $h$ and $y\in \C\setminus\{0\}$ such that $E$ is uniform relative to $R:=E(h,y)$ (namely $h=g^{k_\G}$ and $y=z^{k_\G}$). Clearly, the set $R$ belongs to $\Drat$. This proves the claim.

Finally, we have to show that if $0<d(E)<1$ then the set $R\in\Drat$ such that $E$ is uniform relative to $R$ is unique. Suppose $R'\in \Drat$ is another set such that $E$ is uniform relative to $R'$.
Since $\1_{R'}$ is Besicovitch rationally almost periodic and $d\left(R\right)\1_{E}-d\left(E\right)\1_{R}$ is uniform, it follows from part \ref{itm:prop:interplay-uniform-almost-periodic-b} of \cref{prop:interplay-uniform-almost-periodic} that the function
\begin{equation}
\label{eq:r-r-prime}
\left(d\left(R\right)\1_{E}-d\left(E\right)\1_{R}\right)\cdot \1_{R'}=
d\left(R\right)\1_{E}-d\left(E\right)\1_{R\cap R'}
\end{equation}
is uniform. Since any uniform function has zero mean, we have that
$$
\lim_{N\to\infty}\frac{1}{N}\sum_{n=1}^N d\left(R\right)\1_{E}(n)-d\left(E\right)\1_{R\cap R'}(n)=0,
$$
which shows that $d(R)=d(R\cap R')$. By symmetry, it follows that $d(R)=d(R\cap R')=d(R')$ and hence $d(R\triangle R')=0$. In view of \cref{lem:common-unit-value}, this proves that $R=R'$.
\end{proof}

\begin{Remark}
It is natural to wonder if \cref{thm:structure-theorem} extends to the sets of the form  $\{n\in\N: f\in I\}$, where $f$ is a multiplicative function taking values in the unit circle and $I$ is an arc. However, the density of such sets need not exist (take for instance $f(n)=n^{it}$ for some $t\in\R\setminus\{0\}$ and $I$ to be any arc that is not the full circle).
To avoid dealing with issues of this kind one can switch to working with the somewhat weaker but universal notion of logarithmic density. But one still faces the problem that no analogue of \cref{thm:ST-Ruzsa-3.10} seems to exists in this set-up. Thus formulating and proving an appropriate analogue of \cref{thm:structure-theorem} for this wider class of sets appears to be a non-trivial task.
\end{Remark}

\section{Applications to Ergodic Theory and Combinatorics}
\label{sec:ET}

In this section we provide proofs of \cref{thm:recurrence-divisibility} and \cref{thm:recurrence-enhanced}.

\subsection{The class $\Drat$}
\label{sec:D_rat}

In this subsection we prove some basic facts about elements in $\Drat$ (see \cref{def:c0rat}); these properties will be needed for the proofs of \cref{thm:recurrence-divisibility} and \cref{thm:recurrence-enhanced} in the next subsection.

Given a set $E\subset\N$ consider the following two conditions:
\begin{enumerate}
[label=$\text{(\Alph{enumi})}$, ref=$\text{(\Alph{enumi})}$, leftmargin=*]
\item\label{itm:Drat-1}
$E$ is a rational set;
\item\label{itm:Drat-2}
for all $q\in\N$ and all $r\in\{0, 1,\ldots,q-1\}$ either $E\cap (q\N-r)=\emptyset$ or $d(E\cap(q\N-r))$ exists and is positive.
\end{enumerate}

\begin{Lemma}\label{lem:ST-zeros-of-f-form-rational-set}
If $f\colon \N\to\C$ is a multiplicative function and $0$ lies in the image of $f$, then the level set $T:=E(f,0)$ satisfies conditions \ref{itm:Drat-1} and \ref{itm:Drat-2}.
\end{Lemma}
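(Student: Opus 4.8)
The plan is to verify conditions \ref{itm:Drat-1} and \ref{itm:Drat-2} separately. For \ref{itm:Drat-1}, I would observe that the indicator function of $\N\setminus T$ is itself multiplicative: setting $g(n):=1$ if $f(n)\neq 0$ and $g(n):=0$ if $f(n)=0$, one has $g=\1_{\N\setminus T}$ and $g(1)=1$, and for coprime $m,n$ the identity $f(mn)=f(m)f(n)$ shows that $g(mn)=1$ exactly when $f(m)\neq0$ and $f(n)\neq0$, i.e.\ exactly when $g(m)g(n)=1$; since $g$ is $\{0,1\}$-valued, this yields $g(mn)=g(m)g(n)$. Thus $g$ is a multiplicative function with values in $[0,\infty)$, so \cref{ren:0-1-valued-RAP} makes it Besicovitch rationally almost periodic; as the constant function $1$ is periodic, $\1_T=1-g$ is Besicovitch rationally almost periodic as well, which by \cref{def:c0rat} says precisely that $T$ is rational.

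For \ref{itm:Drat-2}, fix $q\in\N$ and $r\in\{0,\dots,q-1\}$ and assume $T\cap(q\N-r)\neq\emptyset$. I would first note that $d\big(T\cap(q\N-r)\big)$ exists, since the intersection of the rational set $T$ with an arithmetic progression is again rational and every rational set has a density. To obtain positivity, I would pick $n_0\in T\cap(q\N-r)$ and consider
\[
S:=\{\,s\in\N:\ s\equiv 1\bmod q\ \text{ and }\ \gcd(s,n_0)=1\,\}.
\]
For $s\in S$ we have $\gcd(n_0,s)=1$, hence $f(n_0s)=f(n_0)f(s)=0$ and so $n_0s\in T$; also $n_0s\equiv n_0\equiv -r\bmod q$, so $n_0s\in q\N-r$. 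This gives the inclusion $n_0S\subseteq T\cap(q\N-r)$.

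The remaining step is to check that $d(n_0S)>0$. Letting $L$ be the least common multiple of $q$ and of all the prime divisors of $n_0$, the conditions defining $S$ are congruences modulo divisors of $L$, so $S$ is a union of residue classes mod $L$; hence $d(S)$ exists, and since $1\in S$ (so the whole class $\{s:s\equiv 1\bmod L\}$ lies in $S$) we get $d(S)\ge 1/L>0$. From $|n_0S\cap[N]|=|S\cap[\lfloor N/n_0\rfloor]|$ it follows that $d(n_0S)$ exists and equals $\tfrac1{n_0}d(S)>0$, and therefore $d\big(T\cap(q\N-r)\big)\ge d(n_0S)>0$, which is \ref{itm:Drat-2}.

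I do not expect a real obstacle. The only points needing (routine) care are the verification that $\1_{\N\setminus T}$ is multiplicative — needed to apply \cref{ren:0-1-valued-RAP} — and the fact that intersecting a rational set with an arithmetic progression leaves a set whose density exists. The one genuinely load-bearing step is the inclusion $n_0S\subseteq T\cap(q\N-r)$, where the multiplicativity of $f$ is used in an essential way; rationality of $T$ alone would not suffice for \ref{itm:Drat-2}, as the rational set $T=\{2\}$ (for which $T\cap2\N=\{2\}$ has density $0$) illustrates.
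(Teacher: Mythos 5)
Your proposal is correct and follows essentially the same route as the paper's proof: for condition (A) you pass to the $\{0,1\}$-valued multiplicative indicator of $\N\setminus T$ (you invoke \cref{ren:0-1-valued-RAP} directly, while the paper redoes the underlying case analysis via \cref{lem:lH2-C} and \cref{cor:DD82-thm1-and-thm6}, which amounts to the same thing), and for condition (B) you exhibit a positive-density set of coprime multiples $n_0S\subseteq T\cap(q\N-r)$ of a witness $n_0$, exactly as the paper does with $S=\{n:\gcd(x,n)=1\}$, merely folding the congruence condition mod $q$ into $S$ rather than intersecting afterwards. No gaps.
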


\begin{proof}
Let $g(n)$ be the multiplicative function defined as $g(n)=0$ if $f(n)=0$ and $g(n)=1$ if $f(n)\neq 0$. Then $T=E(g,0)$. However, using \cref{lem:lH2-C}, we either have $\|g\|_B=0$ or $\mathbb{D}(g,1)<\infty$.
If $\|g\|_B=0$, then $d(T)=1$, which implies that $T$ is rational. On the other hand, if $\mathbb{D}(g,1)<\infty$ then $\mathbb{D}(g,1)=\sum_{p\in\P}\tfrac{1}{p}(1-g(p))<\infty$ and therefore, using \cref{cor:DD82-thm1-and-thm6}, we deduce that $g$ is Besicovitch rationally almost periodic, which implies that $T$ is rational.
This shows that $T$ satisfies \ref{itm:Drat-1}.

Next, let $q\in\N$ and $r\in\{0, 1,\ldots,q-1\}$. Since $T$ is a rational set, the density $d(T\cap(q\N-r))$ exists. It remains to show that if $T\cap (q\N-r)\neq\emptyset$ then $d(T\cap(q\N-r))$ is positive. Suppose $x\in T\cap (q\N-r)$. Let $S:=\{n\in\N: \gcd(x,n)=1\}$. Then $xS\subset T$. Also, $xS$ is a finite union of infinite arithmetic progressions and hence $xS\cap(q\N-r)$ is a non-empty finite union of infinite arithmetic progressions. This shows that $d(xS\cap(q\N-r))$ exists and is positive, which, in turn, proves that $T$ satisfies \ref{itm:Drat-2}.
\end{proof}

\begin{Proposition}
\label{prop:Drat}
Suppose $R\in\Drat$ and $d(R)>0$. Then $R$ satisfies conditions \ref{itm:Drat-1} and \ref{itm:Drat-2}.
\end{Proposition}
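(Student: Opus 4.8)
The plan is to split according to whether $z=0$ and, in the main case, to pass to a concentrated representative of $R$. Write $R=E(f,z)$ with $f\in\cm$ Besicovitch rationally almost periodic. If $z=0$, then (since $d(R)>0$ forces $0\in im(f)$) both \ref{itm:Drat-1} and \ref{itm:Drat-2} are exactly the content of \cref{lem:ST-zeros-of-f-form-rational-set}, so from now on assume $z\neq0$. As $|f(n)|=|z|$ on $R$, positivity of $d(R)$ gives $\|f\|_B>0$, so by \cref{cor:DD82-thm1-and-thm6} there is a Dirichlet character $\chi$, say of modulus $q$, with $\mathbb{D}(f,\chi)<\infty$; and by \cref{cor:density->concentrated function} there is a concentrated $g\colon\N\to\C\setminus\{0\}$ with $R=E(g,z)$ and $\sum_{p\colon f(p)\neq g(p)}1/p<\infty$, whence $\mathbb{D}(g,\chi)<\infty$ as well. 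Let $\G$ be the (finite) concentration group of $g$. The first step I would carry out is to show that $g(p)=\chi(p)$ for every prime $p$ outside a set $P_0$ with $\sum_{p\in P_0}1/p<\infty$: off a set of summable reciprocal one has simultaneously $g(p)\in\G$ and $p\nmid q$, so $g(p)\overline{\chi(p)}$ is a root of unity of order dividing $L:=\operatorname{lcm}(|\G|,\varphi(q))$; since $1-\Re\zeta$ exceeds a fixed $\delta>0$ over all non-trivial $L$-th roots of unity $\zeta$, finiteness of $\sum_p\tfrac1p\bigl(1-\Re(g(p)\overline{\chi(p)})\bigr)=\mathbb{D}(g,\chi)^2$ forces $\{p\colon g(p)\neq\chi(p)\}$ to have summable reciprocal.

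For \ref{itm:Drat-1} I would use Besicovitch approximation. Fix $\epsilon>0$ and discard from $\N$ the integers divisible by $p^2$ for some prime $p>P_1$, by some prime of $P_0$ exceeding $P_1$, or by a high power $p^{A}$ of some prime $p\leq P_1$; choosing $P_1$ and then $A$ large enough (the latter also in terms of $q$) makes the discarded set $B$ satisfy $\overline d(B)<\epsilon$. For $n\notin B$ the factorisation of $n$ is controlled — bounded powers at the primes $\leq P_1$, squarefree at the primes $>P_1$, where moreover $g=\chi$ — and a direct computation then shows that $g(n)=z$ is equivalent to a congruence on $n$ modulo a fixed integer $M$: one evaluates the contribution of the primes $\leq P_1$ (finitely many possibilities, depending only on $n\bmod M$) and of the primes dividing $q$ by hand, and treats the large squarefree part through $\prod_{p\mid n,\,p>P_1}g(p)=\prod_{p\mid n,\,p>P_1}\chi(p)=\chi\bigl(n/(\text{$M$-smooth part of }n)\bigr)$, which again depends only on $n\bmod M$. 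Hence $\1_R$ agrees off $B$ with a periodic $\{0,1\}$-valued function $P_\epsilon$, so $\|\1_R-P_\epsilon\|_B\leq\overline d(B)<\epsilon$; letting $\epsilon\to0$ proves $\1_R$ Besicovitch rationally almost periodic, i.e.\ $R$ rational. In particular $\1_{R\cap(q'\N-r)}$, being (up to finitely many integers) the product of $\1_R$ with the indicator of an arithmetic progression, is again Besicovitch rationally almost periodic, so $d(R\cap(q'\N-r))$ exists for all $q',r$.

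For \ref{itm:Drat-2} suppose $R\cap(q'\N-r)\neq\emptyset$ and pick $x$ in it; set $Q:=\operatorname{lcm}(\operatorname{rad}(x),q')$. If $m\equiv1\pmod Q$ and $g(m)=1$, then $\gcd(m,x)=1$, so $g(xm)=g(x)g(m)=z$ and $xm\equiv x\equiv-r\pmod{q'}$; thus $x\bigl(\{n\equiv1\bmod Q\}\cap E(g,1)\bigr)\subseteq R\cap(q'\N-r)$, and it suffices to prove $d\bigl(\{n\equiv1\bmod Q\colon g(n)=1\}\bigr)>0$. Since $n\equiv1\bmod Q$ forces $\gcd(n,Q)=1$, and for $n$ coprime to $Q$ this congruence is equivalent to $\psi(n)=1$ for every Dirichlet character $\psi$ modulo $Q$, I would form $\vec h:=(\tilde g,\hat\psi_1,\dots,\hat\psi_{\varphi(Q)})$, where $\hat\psi_i$ agrees with $\psi_i$ off the primes dividing $Q$ and takes an arbitrary nonzero value at prime powers of those primes, and $\tilde g$ agrees with $g$ off the primes dividing $Q$ and is modified at prime powers of those primes to a ``generic'' value $y$ with $(y^j\cdot im(g))\cap im(g)=\emptyset$ for all $j\geq1$ — exactly the device used in the proof of \cref{cor:density->concentrated function} — so that $\tilde g(n)=1$ forces $\gcd(n,Q)=1$. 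Then $\vec h\colon\N\to(\C\setminus\{0\})^{1+\varphi(Q)}$ has each coordinate concentrated ($\tilde g$ as a finite modification of the concentrated $g$, each $\hat\psi_i$ by a direct check), hence $\vec h$ is concentrated by \cref{vector}, and one verifies $E(\vec h,\vec 1)=\{n\equiv1\bmod Q\colon g(n)=1\}$; by \cref{thm:ST-Ruzsa-3.10} (cf.\ \cref{vector}) this set has positive density. Therefore $\overline d(R\cap(q'\N-r))>0$, and since $R$ is rational this upper density equals the density, which establishes \ref{itm:Drat-2}.

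The part I expect to require genuine work is the reduction in \ref{itm:Drat-1} of the condition $g(n)=z$ to a congruence modulo a fixed integer off a set of small density — the bookkeeping near the primes dividing $q$, where $\chi$ vanishes but $g$ does not, needs some care in the choice of $A$ — with a secondary, more routine nuisance being the construction of the non-vanishing concentrated multiplicative function $\vec h$ in \ref{itm:Drat-2} needed to apply \cref{thm:ST-Ruzsa-3.10}.
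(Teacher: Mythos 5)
Your argument is correct, and while it shares the paper's skeleton (the $z=0$ case via \cref{lem:ST-zeros-of-f-form-rational-set}; for $z\neq 0$, replacing $f$ by a concentrated $g$ via \cref{cor:density->concentrated function}, producing $\chi$ with $\mathbb{D}(f,\chi)<\infty$ from \cref{cor:DD82-thm1-and-thm6}, and using the finiteness of the concentration group to force $g(p)=\chi(p)$ off a set of primes with summable reciprocals), it diverges from the paper in how it extracts \ref{itm:Drat-1} and \ref{itm:Drat-2} from that comparison. The paper routes everything through the decomposition $\N=\bigcup_{t\in T_P}tS_P^{(t)}$: it observes $E(f,z)\cap S_P^{(t)}=E(\chi,z)\cap S_P^{(t)}$, invokes inner regularity of $S_P$ (\cref{lem:-AB}, imported from \cite{BR02}) to get both conditions on each piece, and assembles them with \cref{lem:lifting-trick-7}. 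You instead prove \ref{itm:Drat-1} by a bare-hands Besicovitch approximation --- deleting a set of density $<\epsilon$ outside of which $g(n)=z$ reduces to a congruence modulo a fixed $M$ --- and prove \ref{itm:Drat-2} by translating $n\equiv 1\bmod Q$ into character conditions and applying the vector-valued Ruzsa theorem (\cref{thm:ST-Ruzsa-3.10}) to a concentrated tuple $(\tilde g,\hat\psi_1,\dots,\hat\psi_{\varphi(Q)})$, which is precisely the device the paper reserves for \cref{lem:common-unit-value}. What your route buys is self-containedness: it bypasses \cref{lem:lifting-trick-7} and the inner-regularity input entirely, at the cost of the valuation bookkeeping in \ref{itm:Drat-1} (which you carry out correctly: for $n$ outside the bad set, the residue $n\bmod q\prod_{p\leq P_1}p^A$ determines the $P_1$-smooth part $s$, hence $g(s)$ and $\chi(n/s)$) and of an application of Ruzsa's theorem in a dimension $1+\varphi(Q)$ depending on the progression. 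The paper's route is longer to set up but its intermediate lemmas are reused elsewhere in Section 5.
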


\begin{Remark}
\cref{prop:Drat} implies that any level set of a Besicovitch almost periodic multiplicative function is rational. This fails to be true for general (not necessarily multiplicative) Besicovitch rationally almost periodic functions. Indeed, let $D\subset\N$ be arbitrary and consider the function $f\colon\N\to\C$ defined as $f(n)=0$ if $n\in D$ and $f(n)=\frac{1}{n}$ if $n\notin D$. Then $f$ is Besicovitch almost periodic, because $\|f\|_B=0$, and $E(f,0)=D$. This shows that any set whatsoever, and in particular any non-rational set, can be realized as a level set of a Besicovitch almost periodic function.
\end{Remark}

Before proving \cref{prop:Drat} we recall the definition of inner regular sets.

\begin{Definition}[see {\cite[Definition 2.3]{BR02}} and \cite{BKLR16arXiv}]
\label{def:inner-reg}
A subset $R\subset\N$ is called \define{inner regular} if for each $\epsilon>0$ there exists $m\in\N$ such that for each $s\in \{0,1,\ldots,m-1\}$ the intersection $R\cap (m\N-s)$ is either empty or has lower density $>\frac{1-\epsilon}{m}$.
\end{Definition}

\begin{Remark}\label{rem:inner-reg-AB}
It follows immediately from \cref{def:inner-reg} that any inner regular set satisfies condition \ref{itm:Drat-1}. We claim that inner regular sets also satisfy condition \ref{itm:Drat-2}. To prove this claim, let $q\in\N$ and $r\in\{0, 1,\ldots,q-1\}$ be arbitrary and assume $R\cap (q\N-r)\neq\emptyset$. Fix any $x\in R\cap (q\N-r)$. Let $0<\epsilon<\frac{1}{q}$ and choose $m\in\N$ such that for each $s\in \{0,1,\ldots,m-1\}$ the intersection $R\cap (m\N+s)$ is either empty or has lower density $>\frac{1-\epsilon}{m}$. Take $s\in \{0,1,\ldots,m-1\}$ such that $s\equiv x\bmod m$. Since $x\in R$ and $x\in m\N+s$, the intersection $R\cap (m\N+s)$ is non-empty and hence $d(R\cap (m\N+s))>\frac{1-\epsilon}{m}$. On the other hand, $d((q\N+r)\cap (m\N+s))\geq\frac{1}{qm}$. It follows that $d(R\cap (m\N+s)\cap (q\N+r))>\frac{1}{m}(\frac{1}{q}-\epsilon)>0$. This finishes the proof of the claim.
\end{Remark}

We need two lemmas for the proof of \cref{prop:Drat} which we state next.

\begin{Lemma}[see {\cite[Lemma 2.7]{BR02}} applied to $B=\{p^2:p\in P\}\cup(\P\setminus P)$]
\label{lem:-AB}
Let $P\subset\P$ with $\sum_{p\in\P\setminus P}\tfrac{1}{p}<\infty$, and let $S_P$ be the set defined in formula \eqref{eqn:S_P}.
Then $S_P$ is inner regular. In particular, according to \cref{rem:inner-reg-AB}, $S_P$ satisfies conditions \ref{itm:Drat-1} and \ref{itm:Drat-2}.
\end{Lemma}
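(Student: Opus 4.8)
The plan is to identify $S_P$ as the set of positive integers avoiding a convergent family of ``forbidden divisors'' and then to verify the inner regularity condition of \cref{def:inner-reg} by hand; this is exactly the content of \cite[Lemma 2.7]{BR02} in the quoted generality, but since the argument is short I would reproduce it.

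First I would record the reformulation
$$
S_P=\{n\in\N:\ d\nmid n\ \text{for every}\ d\in B\},\qquad B:=\{p^2:p\in P\}\cup(\P\setminus P).
$$
Indeed $n\in S_P$ if and only if $n$ is squarefree and every prime factor of $n$ lies in $P$, which is precisely the assertion that $n$ is divisible neither by $p^2$ for any $p\in P$ nor by $p$ for any $p\in\P\setminus P$. Two properties of $B$ drive the proof: the series $\sum_{d\in B}\tfrac1d=\sum_{p\in P}\tfrac1{p^2}+\sum_{p\in\P\setminus P}\tfrac1p$ converges (the first sum by comparison with $\sum_p p^{-2}$, the second by the hypothesis on $P$), and distinct elements of $B$ are pairwise coprime, since every prime number is a factor of at most one element of $B$.

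Then, given $\epsilon>0$, I would choose a finite set $B_0\subset B$ with $\sum_{d\in B\setminus B_0}\tfrac1d<\tfrac{\epsilon}{2}$ and set $m:=\prod_{d\in B_0}d$. By pairwise coprimality every $d\in B_0$ divides $m$, while no prime factor of any $d\in B\setminus B_0$ divides $m$, so $\gcd(m,d)=1$ for each $d\in B\setminus B_0$. Now fix $s\in\{0,1,\ldots,m-1\}$ and consider the progression $m\N-s$. If some $d\in B_0$ divides $s$, then $d\mid n$ for every $n\in m\N-s$, so $S_P\cap(m\N-s)=\emptyset$. Otherwise no $d\in B_0$ divides any element of $m\N-s$, and the elements of $m\N-s$ that fail to lie in $S_P$ are exactly those divisible by some $d\in B\setminus B_0$; for each such $d$, the set $\{n\in m\N-s:\ d\mid n\}$ is contained in a single residue class modulo $md$ because $\gcd(m,d)=1$, hence has density at most $\tfrac{1}{md}$. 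Summing over $d\in B\setminus B_0$, the upper density of $(m\N-s)\setminus S_P$ is at most $\tfrac1m\sum_{d\in B\setminus B_0}\tfrac1d<\tfrac{\epsilon}{2m}$, so $S_P\cap(m\N-s)$ has lower density at least $\tfrac1m-\tfrac{\epsilon}{2m}>\tfrac{1-\epsilon}{m}$. This establishes that $S_P$ is inner regular, and then \cref{rem:inner-reg-AB} yields that $S_P$ satisfies conditions \ref{itm:Drat-1} and \ref{itm:Drat-2}.

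This lemma is essentially routine, so there is no real ``hard part''; the only point worth flagging is the decision to take $m$ to be the \emph{product} of the elements of $B_0$ rather than their least common multiple. Together with the pairwise coprimality of the elements of $B$, this ensures $\gcd(m,d)=1$ for the remaining forbidden divisors $d\notin B_0$, which is what lets the tail $\sum_{d\notin B_0}1/d$ of a convergent series bound, uniformly over all residue classes modulo $m$, the proportion of integers excluded from $S_P$.
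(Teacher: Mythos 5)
Your proposal is correct and follows exactly the route the paper takes: the paper simply cites \cite[Lemma 2.7]{BR02} with $B=\{p^2:p\in P\}\cup(\P\setminus P)$, and you have reproduced the standard proof of that cited lemma (identify $S_P$ as the integers avoiding the pairwise coprime set $B$ with $\sum_{d\in B}1/d<\infty$, take $m$ the product of a finite subfamily capturing all but $\epsilon/2$ of the sum, and sieve out the tail). The only step that deserves a word of justification is the bound on the upper density of the countable union $\bigcup_{d\in B\setminus B_0}\{n\in m\N-s: d\mid n\}$ by the sum of the individual densities -- upper density is not countably subadditive in general, but here it suffices to note that for $d>N$ the corresponding progression misses $[N]$ and that $B$ contains only $\oh(N)$ elements below $N$, a routine point handled at the same level of detail as the paper's own estimate \eqref{eqn:d-hi-f-0}.
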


\begin{Lemma}
\label{lem:lifting-trick-7}
Let $P\subset\P$ with $\sum_{p\in\P\setminus P}\tfrac{1}{p}<\infty$, let $f$ be multiplicative function. Let $S_P$ be the set defined in formula \eqref{eqn:S_P} and for $t\in\N$ let $S_P^{(t)}:=\{s\in S_P:\gcd(s,t)=1\}$.
If for all $t\in\N$ and $z\in\C$ the set $E(f,z)\cap S_P^{(t)}$ satisfies \ref{itm:Drat-1} and \ref{itm:Drat-2} then for all $z\in\C$ the set $E(f,z)$ satisfies \ref{itm:Drat-1} and \ref{itm:Drat-2}.
\end{Lemma}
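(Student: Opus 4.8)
The plan is to decompose $E:=E(f,z)$ into dilates of ``fibers'' sitting inside copies of $S_P$ --- to each of which the hypothesis applies --- and to control the resulting infinite union by an Euler-product estimate furnished by $\sum_{p\in\P\setminus P}\tfrac1p<\infty$. The starting point is an elementary factorization of $\N$: for $n\in\N$ let $s:=\prod_{p\in P,\,v_p(n)=1}p$ (with $v_p$ the $p$-adic valuation), so that $s\in S_P$, and put $a:=n/s$. Then $\gcd(a,s)=1$, hence $s\in S_P^{(a)}$, and $a$ lies in $\mathcal W:=\{b\in\N:v_p(b)\neq1\text{ for every }p\in P\}$; conversely every pair $(a,s)$ with $a\in\mathcal W$ and $s\in S_P^{(a)}$ comes from a unique $n=as$, so $\N=\bigsqcup_{a\in\mathcal W}a\,S_P^{(a)}$. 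Because $\sum_{p\in\P\setminus P}\tfrac1p<\infty$, the sum $\sum_{a\in\mathcal W}\tfrac1a=\prod_{p\notin P}(1-\tfrac1p)^{-1}\prod_{p\in P}\bigl(1+p^{-2}+p^{-3}+\cdots\bigr)$ converges, and therefore $\overline{d}\bigl(\bigcup_{a\in\mathcal W,\,a>K}a\,S_P^{(a)}\bigr)\leq\sum_{a\in\mathcal W,\,a>K}\tfrac1a\to0$ as $K\to\infty$.

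Split the proof according to whether $z=0$ or $z\neq0$. The case $z=0$ is already handled by \cref{lem:ST-zeros-of-f-form-rational-set}, so assume $z\neq0$. For $a\in\mathcal W$ with $f(a)\neq0$, multiplicativity together with $\gcd(a,s)=1$ for $s\in S_P^{(a)}$ gives $E\cap a\,S_P^{(a)}=a\cdot\bigl(E(f,z/f(a))\cap S_P^{(a)}\bigr)$, while $E\cap a\,S_P^{(a)}=\emptyset$ when $f(a)=0$ (since $z\neq0$). By hypothesis each $E(f,z/f(a))\cap S_P^{(a)}$ is rational, hence so is its $a$-fold dilate; the finite union over $a\leq K$ is a finite union of rational sets, hence rational, and combining this with the tail bound above shows that $E$ is approximable in $\overline{d}$ by finite unions of arithmetic progressions, i.e.\ $E$ satisfies \ref{itm:Drat-1}. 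In particular the density of $E\cap(q\N-r)$ exists for all $q\in\N$, $r\in\{0,\dots,q-1\}$.

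For \ref{itm:Drat-2}, fix such $q,r$ and suppose $n_0\in E\cap(q\N-r)$; write $n_0=a_0s_0$ as above, so that $f(a_0)\neq0$ and $s_0\in Y:=E(f,z/f(a_0))\cap S_P^{(a_0)}$. The congruence $a_0s\equiv-r\pmod q$ cuts out a single residue class $s\equiv s_0\pmod{q'}$, $q':=q/\gcd(a_0,q)$, which we write as $q'\N-r''$ with $r''\equiv-s_0\pmod{q'}$; since $s_0$ lies in $Y\cap(q'\N-r'')$, this set is nonempty, so the hypothesis that $Y$ satisfies \ref{itm:Drat-2} gives $d\bigl(Y\cap(q'\N-r'')\bigr)>0$. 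Dilating by $a_0$ yields $a_0\bigl(Y\cap(q'\N-r'')\bigr)\subseteq E\cap(q\N-r)$, and hence $d\bigl(E\cap(q\N-r)\bigr)\geq\tfrac1{a_0}\,d\bigl(Y\cap(q'\N-r'')\bigr)>0$, which completes the verification of \ref{itm:Drat-2}.

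I expect the part requiring the most care to be purely organizational: checking that $n\mapsto(a,s)$ really is a bijection of $\N$ onto $\bigsqcup_{a\in\mathcal W}a\,S_P^{(a)}$, and tracking how the progression $q\N-r$ pulls back, through division by $a_0$, to a single progression in the variable $s$. The only analytic ingredient --- convergence of $\sum_{a\in\mathcal W}\tfrac1a$, equivalently of the two displayed Euler products --- is immediate from $\sum_{p\in\P\setminus P}\tfrac1p<\infty$; everything else is elementary bookkeeping with multiplicative functions and densities of arithmetic progressions.
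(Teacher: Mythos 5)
Your proof is correct and follows essentially the same route as the paper's: your set $\mathcal W$ is exactly the paper's $T_P$, the decomposition $\N=\bigsqcup_{a\in\mathcal W}a\,S_P^{(a)}$ with $E(f,z)\cap a\,S_P^{(a)}=a\bigl(E(f,z/f(a))\cap S_P^{(a)}\bigr)$ is the same reduction, and the tail is controlled by the same convergent sum $\sum_{a\in\mathcal W}1/a$ (your Euler-product form of the paper's estimate). The only cosmetic differences are that you dispatch $z=0$ by citing \cref{lem:ST-zeros-of-f-form-rational-set} rather than inline, and you verify \ref{itm:Drat-2} by chasing a single witness $n_0=a_0s_0$ through the congruence instead of the paper's ``finite unions of sets satisfying \ref{itm:Drat-1} and \ref{itm:Drat-2} satisfy them'' argument; both are sound.
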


\begin{proof}
Let $T_P$ be defined as
\begin{equation}\label{eqn:T_P}
T_P:=\left\{n\in\N:\text{for all $p\in P$ if $p\mid n$ then $p^2\mid n$}\right\}.
\end{equation}
Since any natural number $n$ can be written uniquely as $st$, where $s\in S_P$, $t\in T_P$ and $\gcd(s,t)=1$, $\N$ can be partitioned into
\begin{equation}
\label{eq:S_P-T_P-partition}
\N=\bigcup_{t\in T_P}tS_P^{(t)}.
\end{equation}
Note that $d(S_P)=M(\1_{S_P})$ exists (due to \cref{thm:wirsing}) and $d(S_P)>0$ because $\sum_{p\in\P\setminus P}\tfrac{1}{p}<\infty$ and therefore
$\sum_{p\in\P}\frac1p\big(1-\1_{S_P}(p)\big)<\infty$ (cf.\ \cref{lem:lH2-C}).
Likewise, $\1_{S_P^{(t)}}$ is a multiplicative function and hence $d(S_P^{(t)})=M(\1_{S_P}^{(t)})$ exists (again due to \cref{thm:wirsing}) and is positive (also by \cref{lem:lH2-C}). Using~\eqref{eq:S_P-T_P-partition} and the fact that $d(tS_P^{(t)})=t^{-1}d(S_P^{(t)})$ we obtain
\begin{equation}
\label{eq:S_P-T_P-1}
\sum_{t\in T_{P}}\tfrac{d(S_P^{(t)})}{t}=\sum_{t\in T_{P}}d(tS_P^{(t)})\leq d\left(\bigcup_{t\in T_{P}} tS_P^{(t)}\right) =d(\N)=1.
\end{equation}

For $t\in T_P$ let $u_t:=f(t)$ and define
\begin{equation}
\label{eq:e_t-1}
E_t:=
\begin{cases}
E(f, \tfrac{z}{u_t}),&\text{if}~u_t\neq 0;
\\
\emptyset,&\text{if $u_t=0$ and $z\neq 0$};
\\
\N,&\text{if $u_t=0$ and $z= 0$}.
\end{cases}
\end{equation}
It is easy to check that $E(f,z)\cap tS_P^{(t)}=t(E_t\cap S_P^{(t)})$.
Observe that if $E_t=E(f, \tfrac{z}{u_t})$ then $E_t\cap S_P^{(t)}$ satisfies \ref{itm:Drat-1} and \ref{itm:Drat-2} due to the assumptions stipulated in the statement of \cref{lem:lifting-trick-7}.
Also, if $E_t=\emptyset$ then $E_t\cap S_P^{(t)}=\emptyset$ obviously satisfies \ref{itm:Drat-1} and \ref{itm:Drat-2}. In light of \cref{lem:-AB}, if $E_t=\N$ then $E_t\cap S_P^{(t)}=S_P^{(t)}$ satisfies \ref{itm:Drat-1} and \ref{itm:Drat-2}.
We see that for each of the three cases comprising the definition of $E_t$ in \eqref{eq:e_t-1}, the set $E_t\cap S_P^{(t)}$ satisfies \ref{itm:Drat-1} and \ref{itm:Drat-2}.

Since $E_t\cap S_P^{(t)}$ satisfies \ref{itm:Drat-1} and \ref{itm:Drat-2} and $E(f,z)\cap tS_P^{(t)}=t(E_t\cap S_P^{(t)})$, it follows that $E(f,z)\cap tS_P^{(t)}$ satisfies \ref{itm:Drat-1} and \ref{itm:Drat-2}.

Note that any finite union of sets satisfying \ref{itm:Drat-1} and \ref{itm:Drat-2} also satisfies \ref{itm:Drat-1} and \ref{itm:Drat-2}. Therefore, for every $M\geq 1$, the set
$$
B_M:=\bigcup_{t\in T_P\atop t\leq M}\left(E(f,z)\cap tS_P^{(t)}\right)
$$
satisfies \ref{itm:Drat-1} and \ref{itm:Drat-2}.
Finally, since $d(E(f,z)\setminus B_M)=0$ as $M\to\infty$ (see equation \eqref{eq:S_P-T_P-1}), we conclude that $E(f,z)$ satisfies \ref{itm:Drat-1}. Since  $B_1\subset B_2\subset\ldots$ and $E(f,z)=\bigcup_{M\geq 1}B_M$, we conclude that $E(f,z)$ satisfies \ref{itm:Drat-2}. This finishes the proof.
\end{proof}

\begin{proof}[Proof of \cref{prop:Drat}]
Let $R\in\Drat$ with $d(R)>0$ be given. Then there exist a Besicovitch rationally almost periodic multiplicative function $f$ and a complex number $z$ such that $R=E(f,z)$.
Note that if $z=0$ then it follows from \cref{lem:ST-zeros-of-f-form-rational-set} that $R$ satisfies \ref{itm:Drat-1} and \ref{itm:Drat-2}. We can therefore assume without loss of generality that $z\neq 0$.

We now apply \cref{cor:density->concentrated function} to find a concentrated multiplicative function $g\colon \N\to\C\setminus\{0\}$ such that the set $P':=\{p\in\P:f(p)\neq g(p)\}$ satisfies $\sum_{p\in P'}\frac{1}{p}<\infty$.
Since $f$ is Besicovitch rationally almost periodic, it follows from \cref{cor:DD82-thm1-and-thm6} that there exists a Dirichlet character $\chi$ such that
$\sum_{p\in\P}\frac{1}{p}(1-f(p)\overline{\chi(p)})$ converges. In particular, $\mathbb{D}(f,\chi)<\infty$.

The function $g$ is a concentrated multiplicative function and therefore its concentration group $\G$ is a finite set of roots of unity and we have $\sum_{\substack{p\in\P,\\ g(p)\notin\G}}\frac{1}{p}<\infty$.
Define
$$
P'':=\{p\in\P:f(p)\neq\chi(p)\}
$$
and let $\rho:=\min\{1-\text{Re}(x\overline{y}): x\in\G,~y\in im(\chi),~x\neq y\}$. Note that $\rho>0$ and
\begin{eqnarray*}
\sum_{p\in P''}\frac{1}{p}
&\leq &
\sum_{\substack{p\in\P,\\ g(p)\notin\G}}\frac{1}{p}~+~\sum_{p\in P'}\frac{1}{p}
~+~
\sum_{\substack{p\in\P,\\ f(p)\neq \chi(p),\\ g(p)\in\G,\\g(p)=f(p)}}\frac{1}{p}
\\
&\leq &
\sum_{\substack{p\in\P,\\ f(p)\notin\G}}\frac{1}{p}~+~\sum_{p\in P'}\frac{1}{p}~+~\frac{1}{\rho}\mathbb{D}(f,\chi)~<~\infty.
\end{eqnarray*}
Let $P:=\P\setminus P''$, let $S_P$ be the set defined in formula \eqref{eqn:S_P} and, for $t\in\N$, let $S_P^{(t)}:=\{s\in S_P:\gcd(s,t)=1\}$.
Since $f(p)=\chi(p)$ for all $p\in P$, we conclude that $E(f,z)\cap S_P^{(t)}=E(\chi,z)\cap S_P^{(t)}$. Recall that all Dirichlet characters are periodic functions. Therefore the set $E(\chi,z)$ is either empty or a finite union of infinite arithmetic progressions. In view of \cref{lem:-AB}, the set $S_P^{(t)}$ is inner regular. Hence $E(\chi,z)\cap S_P^{(t)}$ is an inner regular set. Therefore, by \cref{rem:inner-reg-AB}, for all $t\in\N$ the set $E(f,z)\cap S_P^{(t)}=E(\chi,z)\cap S_P^{(t)}$ satisfies \ref{itm:Drat-1} and \ref{itm:Drat-2}. Finally, we can apply \cref{lem:lifting-trick-7} and conclude that $E(f,z)$ satisfies \ref{itm:Drat-1} and \ref{itm:Drat-2}.
\end{proof}

\cref{prop:Drat} immediately gives the following corollary.

\begin{Corollary}
\label{cor:D_rat}
Let $R\in\Drat$ with $d(R)>0$. Then for all $r\in R$ the set $R-r$ is divisible (cf.\ \cref{def_divisibility-property}). 
\end{Corollary}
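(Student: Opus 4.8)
The plan is to obtain \cref{cor:D_rat} as an immediate consequence of \cref{prop:Drat}, whose content is that any $R\in\Drat$ with $d(R)>0$ satisfies conditions \ref{itm:Drat-1} and \ref{itm:Drat-2}; only \ref{itm:Drat-2} will be used. Thus the whole task reduces to a bookkeeping translation of the divisibility of $R-r$ into condition \ref{itm:Drat-2} for $R$ itself.

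Fix $r\in R$ and $u\in\N$; we must show that $d\big((R-r)\cap u\N\big)$ exists and is positive. First I would translate by $r$: one checks directly that $\big((R-r)\cap u\N\big)+r=R\cap(u\N+r)$, and since density is unchanged under a shift, it suffices to handle $R\cap(u\N+r)$. Next I would replace $u\N+r$ by a progression of the shape occurring in \ref{itm:Drat-2}: picking $s\in\{0,1,\ldots,u-1\}$ with $s\equiv -r\pmod u$, the set $u\N-s$ is exactly $\{n\in\N:n\equiv r\pmod u\}$, and $u\N+r$ is a cofinite subset of it, so the two sets differ by a finite set and hence $d\big(R\cap(u\N+r)\big)=d\big(R\cap(u\N-s)\big)$ as soon as one of the two densities exists.

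Now I would invoke \cref{prop:Drat}: since $R$ satisfies condition \ref{itm:Drat-2}, the intersection $R\cap(u\N-s)$ is either empty or has positive density. But $r\in R$ and $r\equiv r\pmod u$, so $r\in R\cap(u\N-s)$; the intersection is nonempty, hence its density exists and is positive. Unwinding the two reductions gives $d\big((R-r)\cap u\N\big)>0$, and as $u\in\N$ was arbitrary, $R-r$ is divisible in the sense of \cref{def_divisibility-property}.

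There is no genuine obstacle here, since all the real work sits inside \cref{prop:Drat}; the only points needing a little care are getting the index arithmetic right (matching $u\N+r$ with the progression $u\N-s$ of condition \ref{itm:Drat-2} and verifying that their symmetric difference is finite so the densities coincide) and observing that nonemptiness of $R\cap(u\N-s)$ is immediate from $r\in R$. Both are routine.
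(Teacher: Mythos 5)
Your proposal is correct and matches the paper's intent exactly: the paper offers no written proof, stating only that \cref{prop:Drat} ``immediately gives'' the corollary, and your argument is precisely that immediate derivation (shift by $r$, identify $u\N+r$ with the progression $u\N-s$ of condition \ref{itm:Drat-2} up to a finite set, and use $r\in R\cap(u\N-s)$ to rule out the empty case). The index arithmetic and the nonemptiness check are both right.
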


\subsection{Proofs of \cref{thm:recurrence-divisibility} and \cref{thm:recurrence-enhanced}}
\label{subsec:convergence}

\begin{proof}[Proof of \cref{thm:recurrence-enhanced}]
Suppose $E\in\Dzero$ has positive density,  $R\in\Drat$ and $E$ is uniform relative to $R$. Our goal is to show that for all $r\in R$ the set $E-r$ is divisible.

It follows from \cref{prop:Drat} and \cref{cor:D_rat} that for all $r\in R$ and $q\in\N$ the density $d((R-r)\cap q\N)=d(R\cap (q\N+r))$ exists and is positive. However, since the function $d\left(R\right)\1_{E}-d\left(E\right)\1_{R}$ is uniform, it follows from \cref{prop:interplay-uniform-almost-periodic}, part \ref{itm:prop:interplay-uniform-almost-periodic-b0}, that
$d\left(R\right)\1_{E\cap (q\N+r)}-d\left(E\right)\1_{R\cap(q\N+r)}$ is uniform. Since all uniform functions have zero mean, we deduce that $d(E\cap (q\N+r))$ also exists and that
$$
d\left(R\right)d(E\cap (q\N+r))-d\left(E\right)d(R\cap(q\N+r))=0.
$$
Thus, it follows from $d(R\cap (q\N+r))>0$ that $d(E\cap (q\N+r))>0$. This proves that $E-r$ is divisible.
\end{proof}

\begin{Example}\label{eg:counterexample-selfshifts}
Consider the multiplicative function
$$
f(n):=
\begin{cases}
1&\text{if}~n=2^k m,~\text{where}~k\in\{0,2,4,6,\ldots\}~\text{and}~2\nmid m.
\\
0&\text{otherwise.}
\end{cases}
$$
Clearly, $f$ is rationally Besicovitch almost periodic (see \cref{cor:DD82-thm1-and-thm6}) and therefore the level set $E=E(f,1)=\{n\in\N:f(n)=1\}$ belongs to $\Drat$.

Note that $E-r$ is divisible for all $r\in\N\cup\{0\}$. This should be juxtaposed with the fact that for the set of squarefree numbers $Q$ one has that $Q-r$ is divisible if and only if $r\in Q$.
\end{Example}

Next, we embark on the proof of \cref{thm:recurrence-divisibility}.  We will need the following two results.

\begin{Theorem}[\cite{BKLR16arXiv}]
\label{thm:BKLR-4.5}
Let $R\subset \N$ be rational and let $r\in\N\cup\{0\}$. Then the following are equivalent:
\begin{itemize}
\item
$R-r$ is divisible;
\item
$R-r$ is an averaging set of recurrence;
\item
$R-r$  is an averaging set of polynomial multiple recurrence.
\end{itemize}
\end{Theorem}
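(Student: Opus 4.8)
The plan is to treat the two descending implications as routine and to put all the work into the ascending one. Averaging polynomial multiple recurrence contains averaging recurrence (specialize to $\ell=1$, $p_1(x)=x$), and averaging recurrence forces divisibility: as $R$ is rational, so is $R-r$, whence $d(R-r)$ and all the $d((R-r)\cap u\N)$ exist, and testing the averaging recurrence property of $R-r=\{n_1<n_2<\dots\}$ against the rotation by $1$ on $\Z/u\Z$ with $A$ a single point gives $\tfrac1u\cdot d((R-r)\cap u\N)/d(R-r)>0$, i.e.\ $d((R-r)\cap u\N)>0$ for every $u$. (If $d(R-r)=0$ then $R-r$ is neither divisible nor --- being too sparse --- an averaging set of recurrence, so the equivalence is not vacuous; I would only spell this out if asked.)

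For the ascending implication, assume $R-r$ divisible, in particular $d(R-r)>0$; fix $\xbmt$, $A\in\mathcal B$ with $\mu(A)>0$, and $p_i\in\Z[x]$ with $p_i(0)=0$. Since $R$ is rational, $\1_{R-r}$ is Besicovitch rationally almost periodic (\cref{def:d3}), so for each $\vep>0$ there is a $[0,1]$-valued periodic $P$, of some period $q$, with $\|\1_{R-r}-P\|_B<\vep$. Rewriting the average over $R-r$ as $\tfrac1N\sum_{n\le N}\1_{R-r}(n)\,\mu(A\cap T^{-p_1(n)}A\cap\dots\cap T^{-p_\ell(n)}A)$ (this costs the positive factor $d(R-r)$ and uses convergence of polynomial multiple averages à la Host--Kra--Leibman and Walsh, or one simply works with $\liminf$), then using non-negativity of the integrands together with $\|\1_{R-r}-P\|_B<\vep$, the problem reduces to bounding $\liminf_N\tfrac1N\sum_{n\le N}P(n)\,\mu(A\cap\bigcap_i T^{-p_i(n)}A)$ below by something exceeding $\vep$; expanding $P=\sum_{a=0}^{q-1}P(a)\1_{q\N+a}$ and substituting $n=qm+a$ turns this into a weighted sum of polynomial multiple ergodic averages along the arithmetic progressions $q\N+a$.

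The heart of the argument --- and the unique place where divisibility, rather than mere rationality of positive density, enters --- is to bound this weighted sum below uniformly in the modulus $q$. The natural setting is the profinite completion $\widehat\Z$: rationality of $R-r$ produces a Haar-measurable $W\subseteq\widehat\Z$ with $\1_{R-r}$ agreeing, in $\|\cdot\|_B$, with $\1_W$ restricted to $\N$, and divisibility of $R-r$ says precisely that $W$ has positive Haar measure inside every open subgroup $u\widehat\Z$. On the product system $\widehat\Z\times X$ with the commuting measure preserving transformations $\rho\times\mathrm{id}$ (rotation by $1$ on $\widehat\Z$) and $\mathrm{id}\times T$, apply the Bergelson--Leibman polynomial multiple recurrence theorem for commuting transformations \cite{BL96} to the positive-measure set $W\times A$ and to the polynomial family $(n;\,p_1(n),\dots,p_\ell(n))$, all of whose members vanish at $0$: this yields $\liminf_N\tfrac1N\sum_{n\le N}\mathrm{Haar}(W\cap(W-n))\,\mu(A\cap\bigcap_i T^{-p_i(n)}A)>0$. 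The last step --- which I expect to be the main obstacle --- is to transfer this ``$\widehat\Z$-fibre-averaged'' positivity back to positivity of $\tfrac1N\sum_{n\le N}\1_{R-r}(n)\,\mu(A\cap\bigcap_i T^{-p_i(n)}A)$ itself; this is where one uses unique ergodicity of $\rho$ on $\widehat\Z$ together with the divisibility fact that $W$ charges every subgroup $u\widehat\Z$, which is exactly what prevents the off-diagonal characters in the residue-class expansion of $P$ from cancelling the main Bergelson--Leibman term. That divisibility cannot be omitted is shown by the odd numbers: they are rational of density $\tfrac12$ but $\mathrm{Haar}(W\cap2\widehat\Z)=0$, and they fail averaging recurrence (take $X=\{0,1\}$, $T$ the transposition, $A=\{0\}$, so that $\mu(A\cap T^{-n}A)=0$ for all odd $n$). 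Everything before this transfer is assembly: the Besicovitch approximation of \cref{def:d3}, the Daboussi--Delange structure results of Subsection~\ref{subsec:AP}, and the polynomial Szemer{\'e}di theorem.
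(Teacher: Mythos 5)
First, a point of comparison: this paper contains \emph{no} proof of \cref{thm:BKLR-4.5} --- it is quoted from \cite{BKLR16arXiv} and used as a black box in the proof of \cref{thm:recurrence-divisibility} --- so your attempt can only be measured against the argument in that reference and on its own merits. Your two descending implications are the standard ones and match the cyclic-rotation remark preceding \cref{def_divisibility-property}. One aside is wrong, though: a density-zero set is \emph{not} automatically ``too sparse'' to be an averaging set of recurrence (the perfect squares form a rational set of density zero which is an averaging set of polynomial multiple recurrence by Furstenberg--S\'ark\"ozy and Bergelson--Leibman), so your proposed handling of the degenerate case $d(R-r)=0$ does not work; the statement implicitly needs a positivity assumption there, which your argument correctly uses but does not actually justify dispensing with.

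The genuine gap is in the ascending implication, precisely at the step you flag as ``the main obstacle.'' Applying Bergelson--Leibman \cite{BL96} to $W\times A$ in $\widehat{\Z}\times X$ with the commuting transformations $\rho\times\mathrm{id}$, $\mathrm{id}\times T$ and exponents $n,p_1(n),\dots,p_\ell(n)$ does yield
\[
\liminf_{N\to\infty}\frac1N\sum_{n=1}^N \mathrm{Haar}\big(W\cap(W-n)\big)\,\mu\Big(A\cap\bigcap_{i=1}^{\ell}T^{-p_i(n)}A\Big)>0,
\]
but the weight $w(n):=\mathrm{Haar}(W\cap(W-n))$ is the \emph{autocorrelation} of $W$, not $\1_{R-r}(n)$, and positivity of $\frac1N\sum_n w(n)c(n)$ for the nonnegative sequence $c(n)=\mu(A\cap\cdots)$ gives no control on $\frac1N\sum_n\1_{R-r}(n)c(n)$: for the odometer, $w(n)$ is typically bounded below on a set of $n$ of full density even when $R-r$ has tiny density, so the entire positive mass of the Bergelson--Leibman average may be carried by $n\notin R-r$. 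Unique ergodicity of $\rho$ and the fact that $W$ charges every $u\widehat{\Z}$ concern the distribution of $w$, not its support relative to $R-r$, and do not repair this. The route that closes the argument (and is the one in \cite{BKLR16arXiv}) avoids the product system: divisibility locates inside $R-r$ a residue class $u\N$ carrying a positive proportion of the set; along $n=um$ the polynomials $p_i(um)$ still vanish at $m=0$, so ordinary Bergelson--Leibman for the single transformation $T$ gives positivity along that class; nonnegativity of $c(n)$ lets one discard the other classes; and rationality --- through inner regularity in the sense of \cref{def:inner-reg} and \cref{lem:-AB}, combined with the uniformity estimate of \cref{l11} --- is what makes the Besicovitch error small compared with the recurrence gain, uniformly in the modulus. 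Your last sentence names these ingredients but does not assemble them, so as written the proof does not close.
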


\begin{Lemma}[Lemma 3.5, \cite{FHK13}]\label{l11}
Let $\xbmt$ be an invertible measure preserving system, $k\in\N$,
$p_1,\ldots,p_\ell \in \Z[x]$, $f_1,\ldots,f_\ell\in L^\infty\xbm$ bounded by $1$ and let $F\colon\N\rightarrow\C$ be bounded by $1$ as well. Then there exists an integer $s\in\N$, that only depends on $k$ and the maximal degree of the polynomials $p_1,\ldots,p_\ell$, such that
\[
\left\|\frac{1}{N}\sum_{n=1}^N F(n) T^{p_1(n)}f_1\cdots T^{p_\ell(n)}f_\ell \right\|_{L^2\xbm}~=~\Oh\left(\|F\|_{U^s_{[N]}}\right)+\oh(1).
\]
\end{Lemma}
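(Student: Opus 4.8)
The plan is to prove the estimate by the classical combination of van der Corput's inequality with PET (polynomial exhaustion technique) induction --- the method of Bergelson--Leibman and of Frantzikinakis--Host--Kra for bounding polynomial multiple ergodic averages by Gowers uniformity seminorms; indeed, the statement is Lemma~3.5 of \cite{FHK13}, which one may alternatively just cite. To begin, I would view the sequence $a_n:=F(n)\,T^{p_1(n)}f_1\cdots T^{p_\ell(n)}f_\ell$ as an $L^2\xbm$-valued sequence of norm at most $1$ and apply the finitary van der Corput inequality: for every $H\le N$,
\[
\left\|\frac1N\sum_{n=1}^N a_n\right\|_{L^2\xbm}^{2}\ \ll\ \frac1H+\frac1H\sum_{h=1}^{H}\left|\frac1N\sum_{n=1}^{N-h}\langle a_{n+h},a_n\rangle\right|,
\]
where $\langle\cdot,\cdot\rangle$ is the inner product of $L^2\xbm$. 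Expanding the inner product and composing the integral with the measure-preserving map $T^{-p_1(n)}$, the summand $\langle a_{n+h},a_n\rangle$ becomes $\Delta_hF(n):=F(n+h)\overline{F(n)}$ times an integral of a product $\prod_i T^{\tilde p_i(n)}\tilde f_i$, where the $\tilde f_i$ are products of the $f_j$ and their conjugates --- hence still bounded by $1$ --- and the polynomials $\tilde p_i(n)=p_i(n+h)-p_1(n)$ and, for $i\ge 2$, $p_i(n)-p_1(n)$ form a configuration of \emph{strictly smaller PET weight} once $h$ is treated as a fixed parameter. Thus one van der Corput step replaces the weight $F$ by a discrete derivative $\Delta_hF$, introduces one averaging over a shift parameter, strictly simplifies the polynomial configuration, and keeps all internal functions bounded by $1$.

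Next I would iterate according to the PET well-ordering: each step strictly lowers the weight of the polynomial configuration, so the procedure terminates after a number $s$ of steps, and $s$ can be bounded purely in terms of $\ell$ and $\max_i\deg p_i$, uniformly in the system. At termination the surviving polynomials are constant in $n$, so the accompanying integrals no longer depend on $n$ and are scalars of modulus at most $1$; one is then left with a bound whose right-hand side is, up to an $\oh(1)$ error and an absolute constant, an average of the iterated derivatives $\Delta_{h_1}\cdots\Delta_{h_s}F$ over the accumulated shift parameters. By Cauchy--Schwarz in $h_1,\ldots,h_s$ and the definition of the Gowers seminorm (including the normalization by $\|1_{[N]}\|_{U^{s}_{\Z/\tilde{N}\Z}}$, which is bounded below by a positive constant), this average is controlled by a fixed power of $\|F\|_{U^{s}_{[N]}}$; since everything has been carried out with the $\Oh$- and $\oh$-constants depending only on $\ell$ and the degrees, taking $2^{s}$-th roots and relabelling $s$ yields the statement.

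The main obstacle is the bookkeeping of the PET induction: one must fix the correct notion of the ``weight'' of a finite system of polynomials (in the variable $n$, with the accumulated shift parameters regarded as constants), verify that each van der Corput step strictly decreases this weight in the associated well-founded order no matter how many parameters have already been introduced, and confirm that the total number of steps $s$ --- and hence the Gowers index --- is bounded solely in terms of $\ell$ and $\max_i\deg p_i$, with all $\Oh$- and $\oh$-constants uniform in $\xbmt$ and in $F,f_1,\ldots,f_\ell$. The remaining ingredients (boundedness of the intermediate functions, the choice of the van der Corput windows, the passage from iterated bounded-range $h$-averages to the normalized Gowers seminorm over $[N]$, and the management of the $\oh(1)$ error through the iteration) are routine once this framework is in place.
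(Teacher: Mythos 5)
The paper offers no proof of this lemma: it is quoted directly from \cite{FHK13} (Lemma~3.5 there), whose proof is exactly the van der Corput plus PET-induction argument you outline, so your proposal matches the source's approach. The sketch is correct in outline, with the PET weight bookkeeping and the passage from bounded-range $h$-averages to the normalized $U^s_{[N]}$ seminorm being the standard details you rightly flag as the remaining work.
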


\begin{proof}[Proof of \cref{thm:recurrence-divisibility}]
Let $E\in\Dzero$ and $r\in\N\cup\{0\}$. It suffices to show that if $E-r$ is divisible then $E-r$ is an averaging set of polynomial multiple recurrence, since all the other implications formulated in \cref{thm:recurrence-divisibility} are obvious. 

Thus, assume $E-r$ is divisible. Note that by \cref{thm:structure-theorem} there exists $R\in\Drat$ such that $E$ is uniform relative to $R$. According to \cref{prop:Drat}, the set $R$ is rational. Moreover, it follows from $E-r\subset R-r$ that $R-r$ is divisible.

Let $\xbmt$ be an arbitrary invertible measure preserving system, let
$A\in\mathcal{B}$ with $\mu(A)>0$ and let $p_i\in\Z[x]$, $i=1,\ldots,\ell$
with $p_i(0)=0$ be given.
Using \cref{l11} and the fact that $d\left(R\right)\1_{E}-d\left(E\right)\1_{R}$ is uniform, we get that the limit
\begin{equation}
\label{eqn-fi-0}
\lim_{N\to\infty}\frac{1}{N}\sum_{n=1}^N\1_{E-r}(n)
 \mu\big(A\cap T^{-p_1(n)}A\cap\ldots\cap T^{-p_\ell(n)}A\big)
\end{equation}
is the same as the limit
\begin{equation}
\label{eqn-fi-1}
 \frac{d(E)}{d(R)}\lim_{N\to\infty}\frac{1}{N}\sum_{n=1}^N \1_{R-r}(n)
\mu\big(A\cap T^{-p_1(n)}A\cap\ldots\cap T^{-p_\ell(n)}A\big),
\end{equation}
(meaning that the limit in \eqref{eqn-fi-0} exists if and only if the limit in \eqref{eqn-fi-1} exists and then they are equal).
Using \cref{thm:BKLR-4.5} and the fact that $R$ is rational and $R-r$ is divisible, we conclude that the limit in \eqref{eqn-fi-1} exists and is positive. It follows that the limit in \eqref{eqn-fi-0} exists and is positive. Hence $E-r$ is an averaging set of polynomial multiple recurrence.
\end{proof}

\begin{Remark}
It is natural to ask whether \cref{thm:recurrence-divisibility} can be extended to a more general setting involving several commuting measure preserving transformations. The methods used in this section to derive a proof \cref{thm:recurrence-divisibility} are general enough to also work in this more general set-up, except one missing ingredient, which is a version of \cref{thm:BKLR-4.5} for several commuting transformations. This extension of \cref{thm:BKLR-4.5} is stated as an open problem (Question 2.10) in \cite{BKLR16arXiv}. Informally, the problem boils down to removing the $\epsilon$ in Theorem 1.1 in \cite{Frantzikinakis15}.
\end{Remark}

\subsection{Level sets of multiplicative functions are good for averaging convergence}

A set $E\subset\N$ is an \define{averaging set of polynomial multiple convergence} if for all invertible measure preserving systems $\xbmt$, all $\ell\geq 1$, all $f_1,\ldots,f_\ell\in L^\infty\xbm$ and all polynomials
$p_i\in\Z[x]$, $i=1,\ldots,\ell$, the limit
\begin{equation*}
\lim_{N\to\infty}\frac{1}{N}\sum_{n=1}^N1_E(n)
T^{p_1(n)}f_1\cdots T^{p_\ell(n)}f_\ell
\end{equation*}
exists in $L^2\xbm$.

In this subsection we give a proof of the following theorem.

\begin{Theorem}
Suppose $E\in\Dzero$ has positive density. Then $E$ is an averaging set of polynomial multiple convergence.
\end{Theorem}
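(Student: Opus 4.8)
The plan is to deduce the theorem from two facts that are already available: the structure theorem (\cref{thm:structure-theorem}), which splits $\1_E$ into a uniform piece and a ``rational'' piece, and the classical norm convergence theorem for polynomial multiple ergodic averages along a single transformation, which handles the rational piece. (Note that this last ingredient is genuinely unavoidable: since $\N=E(\mathbf 1,1)\in\Dzero$ has positive density, the case $E=\N$ of the present statement is precisely the classical polynomial convergence theorem.)

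First I would apply \cref{thm:structure-theorem} to find $R\in\Drat$ such that $E$ is uniform relative to $R$; since $d(E)>0$ and $E\subset R$ we also have $d(R)>0$, so we may write
\[
\1_E=\frac{d(E)}{d(R)}\,\1_R+\frac{1}{d(R)}\,F,\qquad F:=d(R)\1_E-d(E)\1_R,
\]
where $F$ is a uniform function bounded in modulus by $d(R)\le1$. Now fix an invertible measure preserving system $\xbmt$, functions $f_1,\dots,f_\ell\in L^\infty\xbm$ which (by linearity of the averages in each $f_i$) we may take bounded by $1$, and polynomials $p_1,\dots,p_\ell\in\Z[x]$, and set $g(n):=T^{p_1(n)}f_1\cdots T^{p_\ell(n)}f_\ell$, so that $\|g(n)\|_{L^2\xbm}\le1$ for every $n$. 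Applying \cref{l11} to the uniform function $F$ gives $\frac1N\sum_{n=1}^N F(n)g(n)\to0$ in $L^2\xbm$. Hence it remains only to show that $\frac1N\sum_{n=1}^N\1_R(n)g(n)$ converges in $L^2\xbm$.

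To treat $\1_R$ I would use that $R$ is rational (\cref{prop:Drat}). Given $\epsilon>0$, pick a finite union of arithmetic progressions $B$ with $\overline d(R\triangle B)<\epsilon$, and write $\1_B=\sum_{r\in I}\1_{q\N+r}$ for a common modulus $q\in\N$ and a set $I\subset\{0,1,\dots,q-1\}$ of residues. For each $r\in I$ the change of variables $n=qj+r$ identifies $\frac1N\sum_{n=1}^N\1_{q\N+r}(n)g(n)$, up to a lower-order boundary term, with $\frac1q$ times an average of the form $\frac1M\sum_{j=1}^M T^{\widetilde p_1(j)}f_1\cdots T^{\widetilde p_\ell(j)}f_\ell$, where $\widetilde p_i(j):=p_i(qj+r)\in\Z[j]$; by the norm convergence theorem for polynomial multiple ergodic averages this converges in $L^2\xbm$, and summing over $r\in I$ shows that $\frac1N\sum_{n=1}^N\1_B(n)g(n)$ converges. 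Since
\[
\Bigl\|\tfrac1N\textstyle\sum_{n=1}^N\bigl(\1_R(n)-\1_B(n)\bigr)g(n)\Bigr\|_{L^2\xbm}\le\tfrac1N\sum_{n=1}^N\bigl|\1_R(n)-\1_B(n)\bigr|
\]
has limit superior at most $\overline d(R\triangle B)<\epsilon$ as $N\to\infty$, a routine Cauchy argument in $L^2\xbm$ (letting $\epsilon\to0$) shows that $\bigl(\frac1N\sum_{n=1}^N\1_R(n)g(n)\bigr)_N$ is convergent. Combining this with the vanishing of the $F$-weighted averages completes the proof.

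This is essentially a convergence counterpart of the proof of \cref{thm:recurrence-divisibility}, so I do not anticipate a serious obstacle once \cref{thm:structure-theorem} is in hand. The only steps that need care are the reduction of the $\1_{q\N+r}$-weighted averages to bona fide polynomial multiple ergodic averages along $T$ (so that the classical convergence theorem applies verbatim, the polynomials $\widetilde p_i$ still having integer coefficients) and the Besicovitch-approximation/Cauchy passage from finite unions of arithmetic progressions to $R$. I note also that, in contrast to the recurrence statements, convergence here requires no assumption on the constant terms $p_i(0)$.
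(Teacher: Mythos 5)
Your argument is correct and follows essentially the same route as the paper: apply \cref{thm:structure-theorem}, use \cref{l11} together with the uniformity of $d(R)\1_{E}-d(E)\1_{R}$ to replace the averages weighted by $\1_E$ with those weighted by $\frac{d(E)}{d(R)}\1_R$, and then dispose of the rational set $R$. The only difference is that the paper simply cites \cite{BKLR16arXiv} for the fact that rational sets are averaging sets of polynomial multiple convergence, whereas you reprove that ingredient directly (approximation of $\1_R$ in $\|\cdot\|_B$ by finite unions of arithmetic progressions, change of variables $n=qj+r$, the classical polynomial norm convergence theorem, and a Cauchy argument), which is precisely the argument behind the citation.
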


\begin{proof}
Let $\xbmt$ be an arbitrary invertible measure preserving system, $\ell\geq 1$, $f_1,\ldots,f_\ell\in L^\infty\xbm$ and $p_1,\ldots,p_\ell\in\Z[x]$.

By \cref{thm:structure-theorem} we can find $R\in\Drat$ such that $E$ is uniform relative to $R$. Then, in light of \cref{l11} and the fact that $d\left(R\right)\1_{E}-d\left(E\right)\1_{R}$ is uniform, we have
\begin{equation*}
\lim_{N\to\infty}\left\|
\frac{1}{N}\sum_{n=1}^N\1_{E}(n)
 T^{p_1(n)}f_1\cdots T^{p_\ell(n)}f_\ell -
 \frac{d(E)}{d(R)}\frac{1}{N}\sum_{n=1}^N \1_{R}(n)
T^{p_1(n)}f_1\cdots T^{p_\ell(n)}f_\ell\right\|_{L^2\xbm}=0.
\end{equation*}
It was shown in \cite[Section 2]{BKLR16arXiv} that any rational set of positive density is an averaging set of polynomial multiple convergence. Therefore
\begin{equation*}
\lim_{N\to\infty}\frac{1}{N}\sum_{n=1}^N \1_{R}(n)
T^{p_1(n)}f_1\cdots T^{p_\ell(n)}f_\ell
\end{equation*}
exists in $L^2\xbm$. This proves that
\begin{equation*}
\lim_{N\to\infty}\frac{1}{N}\sum_{n=1}^N \1_{E}(n)
T^{p_1(n)}f_1\cdots T^{p_\ell(n)}f_\ell
\end{equation*}
exists and hence $E$ is an averaging set of polynomial multiple convergence.
\end{proof}

\appendix
\counterwithin{Theorem}{section}

\section{Appendix}

\begin{proof}[Proof of \cref{lem:mconv-characterization}]
For $f\in\mconv$ and for any periodic function $g\colon \N\to\C$, say of period $q$, the mean $M(f\cdot g)$ exists, because $M(f\cdot g)=\sum_{r=0}^{q-1}g(r)\lim_{N\to\infty}\frac1N\sum_{n=1}^N f(qn+r)$. Since any Dirichlet character $\chi$ is periodic, we conclude that $M(\chi\cdot f)$ always exists.

It thus remains to show that for any multiplicative function $f$ bounded by $1$ for which $M(\chi\cdot f)$ exists for all Dirichlet characters $\chi$, we have
\begin{equation}\label{eq:sup-inf}
\lim_{N\to\infty}\frac{1}{N}\sum_{n=1}^N f(qn+r)\quad\text{exists for all $q,r\in\N$.}
\end{equation}
This is equivalent to the assertion that
\begin{equation}\label{eq:sup-inf-alt}
\lim_{N\to\infty}\frac{1}{N}\sum_{n=1}^N \1_{q\Z+r}(n)f(n)\quad\text{exists for all $q,r\in\N$.}
\end{equation}

We prove \eqref{eq:sup-inf} by induction on $d=\gcd(q,r)$.
The beginning of the induction is given by $d=1$. In this case the numbers $q$ and $r$ are coprime, which implies that the function $\1_{q\Z+r}$ can be written as a finite linear combination of Dirichlet characters in the following way:
$$
\1_{q\Z+r}(n)=\frac1{\tot(q)}\sum_{\chi\bmod q}\ov{\chi}(r)\chi(n),
$$
where $\sum_{\chi~{\rm mod}~q}$ denotes the sum over all Dirichlet characters of modulus $q$. 
Therefore,
$$
\lim_{N\to\infty}\frac{1}{N}\sum_{n=1}^N \1_{q\Z+r}(n)f(n)
=
\frac1{\tot(q)}\sum_{\chi~{\rm mod}~q}\ov{\chi}(r)\lim_{N\to\infty}\frac1N\sum_{n=1 }^N f(n)\chi(n).
$$
From this \eqref{eq:sup-inf-alt}, and therefore also \eqref{eq:sup-inf}, follows immediately.

Next, we prove the inductive step.
Let $d_0>1$ and assume that \eqref{eq:sup-inf} has already been proven for all pairs $q$ and $r$ with $\gcd(q,r)<d_0$.
We will show \eqref{eq:sup-inf} for all pairs $q$ and $r$ with $\gcd(q,r)=d_0$.

In the following, for a set $D\subset \C$ of complex numbers we will use $\acc{\,D}$ to denote the set of accumulation points of $D$. Note that for a bounded sequence $(x_N)_{N\in\N}$ of complex numbers the limit $\lim_{N\to\infty}x_N$ exists if and only if
$$
\diam{\acc\{x_N:N\in\N\}}=0,
$$
where $\diam{\cdot}$ is used to denote the diameter of a set.
We make the following claim:

\noindent\textit{\underline{Claim:}} Let $p\in\P$ be an arbitrary prime number and let $d\in\N$ be a natural number satisfying $p\nmid d$ and $d<d_0$.
Then for all pairs $Q,R\in\N$ for which $\gcd(Q,R)=dp^k$ for some $k\in\N$, one of two possibilities holds:
\begin{enumerate}
[label=\text{(\alph*)}, ref=\text{(\alph*)}, leftmargin=*]
\item\label{itm:ls-li-a}
Either
\begin{equation}\label{eq:sup-inf-K-a}
\diam{\acc{\left\{\frac{1}{N}\sum_{n=1}^N f(Qn+R) : N\in\N\right\}}}=0,
\end{equation}
\item\label{itm:ls-li-b}
or there exist $Q',R'\in\N$ such that $\gcd(Q',R')=dp^{k+1}$ and
\begin{equation}\label{eq:sup-inf-K-b}
\begin{split}
\diam{\acc{\left\{\frac{1}{N}\sum_{n=1}^N f(Qn+R)
: N\in\N\right\}}}&\\
\leq\frac{1}{p}~\diam{\acc{\left\{
\frac{1}{N}\sum_{n=1}^N f(Q'n+R'): N\in\N\right\}}}&.
\end{split}
\end{equation}
\end{enumerate}

Before we proceed to prove this claim, let us see how we can use it to finish the proof of the inductive step. Hence, let $q,r\in\N$ with $\gcd(q,r)=d_0$.
Since $d_0>1$, we can find $p\in\P$ and $k\in\N$ such that $d_0=dp^k$ and $\gcd(d,p)=1$. Observe that $d<d_0$. To prove that the limit of $\frac{1}{N}\sum_{n=1}^N f(qn+r)$ exists as $N\to\infty$, it suffices to show that for all $\epsilon>0$ one has
\begin{equation}
\label{eqn:ls-li-4}
\diam{\acc{\left\{\frac{1}{N}\sum_{n=1}^N f(qn+r) : N\in\N\right\}}}<\epsilon.
\end{equation}
Thus, let $\epsilon>0$ be arbitrary. We apply the above claim and find ourselves either in case \ref{itm:ls-li-a} or in case \ref{itm:ls-li-b}. If we end up in case \ref{itm:ls-li-a} then \eqref{eqn:ls-li-4} holds and we are done. If we are in case \ref{itm:ls-li-b} then we obtain a new pair of numbers $q',r'\in\N$ with $\gcd(q',r')=dp^{k+1}$. We then apply the claim again to this new pair of numbers $q'$ and $r'$.
We continue this procedure and, after $j$-many applications of the claim, it follows either from
\eqref{eq:sup-inf-K-a} that
$$
\diam{\acc{\left\{\frac{1}{N}\sum_{n=1}^N f(qn+r) : N\in\N\right\}}}=0,
$$
or from \eqref{eq:sup-inf-K-b} that
$$
\diam{\acc{\left\{\frac{1}{N}\sum_{n=1}^N f(qn+r) : N\in\N\right\}}}\leq \frac{1}{p^j}.
$$
If $j$ is sufficiently large, then $\frac{1}{p^j}<\epsilon$ and hence \eqref{eqn:ls-li-4} is proven.

It remains to prove the above claim. Let $p\in\P$. Assume that $k,d\in\N$ with $p\nmid d$ and $d<d_0$ and let $Q,R\in\N$ satisfy $\gcd(Q,R)=dp^k$.
Define $Q_0:=Qp^{-k}$ and $R_0:=Rp^{-k}$.
We now distinguish two cases, the case $p\mid Q_0$ (which will correspond to part \ref{itm:ls-li-a} of the claim) and the case $p\nmid Q_0$ (which will correspond to part \ref{itm:ls-li-b} of the claim).
In the first case we have $p\nmid R_0$, because otherwise we have $p^{k+1}\mid \gcd(Q,R)$ which contradicts $p\nmid d$. Therefore the equation 
\begin{equation}
\label{eq:crt-1}
Q_0 x+R_0\equiv 0\bmod p
\end{equation}
has no solutions in $x$. This implies that for any $n\in\N$ the number $Q_0 n+R_0$ is coprime to $p^k$.
Hence
\begin{equation}\label{eq:sup-inf-5}
\frac{1}{N}\sum_{n=1}^N f(Qn+R)
=f(p^k)\left(\frac{1}{N}\sum_{n=1}^Nf(Q_0n+R_0)\right).
\end{equation}
However, we have that $\gcd(Q_0,R_0)=d$ and $d<d_0$. Therefore, by the induction hypothesis, the limit of \eqref{eq:sup-inf-5} as $N\to\infty$ exists and so \eqref{eq:sup-inf-K-a} is satisfied.

Next, assume $p\nmid Q_0$. In this case equation \eqref{eq:crt-1} possesses exactly one solution for $x\in \{0,\ldots,p-1\}$ which we denote by $x_0$. We deduce that $p^k$ is coprime to $Q_0n+R_0$ if and only if $n\not\equiv x_0\bmod p$. In particular, we have that $f(Qn+R)=f(p^k)f(Q_0n+R_0)$ for all $n\in\N$ with $n\not\equiv x_0\bmod p$.
Define $Q':=pQ$ and $R':=Qx_0+R$.
We obtain
\begin{eqnarray*}
\frac{1}{pN}\sum_{n=1}^{pN} f(Qn+R)
&=&\frac{1}{pN}\sum_{n=1}^{pN} f(p^k(Q_0n+R_0))
\\
&=&\frac{1}{pN}\sum_{x=0}^{p-1}\sum_{n=1}^N f(p^k(Q_0(pn+x)+R_0))
\\
&=&\sum_{x\in\{0,1\ldots,p-1\}\setminus\{x_0\}} \frac{f(p^k)}{p}\left(\frac{1}{N}\sum_{n=1}^N f(Q_0pn+Q_0x+R_0) \right)
\\
&&\qquad\qquad+\frac{1}{p}\left(\frac{1}{N}\sum_{n=1}^N f(Q'n+R')\right).
\end{eqnarray*}
Define $Q_1:=Q_0p$ and $R_x:=Q_0x+R_0$. Then,
$$
\frac{1}{N}\sum_{n=1}^N f(Q_0pn+Q_0x+R_0)
=\frac{1}{N}\sum_{n=1}^N f(Q_1n+R_x).
$$
Furthermore, for $x\neq x_0$, we have $\gcd(Q_0,Q_0x+R_0)=\gcd(Q_0,R_0)=d$ and therefore
$\gcd(Q_1,R_x)$ is either equal to $d$ or to $dp$. However, $\gcd(Q_1,R_x)$ cannot be equal to $dp$ because $R_x\not\equiv 0\bmod p$ for $x\neq x_0$. Hence $\gcd(Q_1,R_x)=d$. Since $d<d_0$, we can use the induction hypothesis to deduce that the limit of $\frac{1}{N}\sum_{n=1}^N f(Q_1n+R_x)$ exists as $N\to\infty$ for all $x\neq x_0$. Therefore,
\begin{equation*}
\begin{split}
&\diam{\acc{\left\{\frac{1}{N}\sum_{n=1}^N f(Qn+R)
: N\in\N\right\}}}\\
&=\diam{\acc{\left\{\frac{1}{pN}\sum_{n=1}^{pN} f(Qn+R)
: N\in\N\right\}}}\\
&=\diam{\acc{\left\{\frac{1}{pN}\sum_{n=1}^{N} f(Q'n+R')
: N\in\N\right\}}}\\
&=\frac{1}{p}~\diam{\acc{\left\{\frac{1}{N}\sum_{n=1}^{N} f(Q'n+R')
: N\in\N\right\}}}.
\end{split}
\end{equation*}
Moreover, since $\gcd(Q_0,Q_0x_0+R_0)=\gcd(Q_0,R_0)=d$ and since $Q_0x_0+R_0\equiv 0\bmod p$, we have that $\gcd(Q',R')=dp^{k+1}$. This shows that we are in case \ref{itm:ls-li-b} of the claim.
\end{proof}

\bibliographystyle{siam}

\providecommand{\noopsort}[1]{} 

\allowdisplaybreaks
\small
\bibliography{BibMF}

\begin{thebibliography}{10}

\bibitem{BL85}
{\sc A.~Bellow and V.~Losert}, {\em The weighted pointwise ergodic theorem and
  the individual ergodic theorem along subsequences}, Trans. Amer. Math. Soc.,
  288 (1985), pp.~307--345.

\bibitem{Bergelson87}
{\sc V.~Bergelson}, {\em Ergodic {R}amsey theory}, in Logic and combinatorics
  ({A}rcata, {C}alif., 1985), vol.~65 of Contemp. Math., Amer. Math. Soc.,
  Providence, RI, 1987, pp.~63--87.

\bibitem{BKLR16arXiv}
{\sc V.~Bergelson, J.~Ku{\l}aga-Przymus, M.~Lema{\'n}czyk, and F.~K. Richter},
  {\em Rationally almost periodic sequences, polynomial multiple recurrence and
  symbolic dynamics}, ArXiv e-prints,  (2016).
\newblock
  \href{https://arxiv.org/abs/1611.08392}{https://arxiv.org/abs/1611.08392} --
  to appear in Ergodic Theory Dynam. Systems.

\bibitem{BL96}
{\sc V.~Bergelson and A.~Leibman}, {\em Polynomial extensions of van der
  {W}aerden's and {S}zemer\'edi's theorems}, J. Amer. Math. Soc., 9 (1996),
  pp.~725--753.

\bibitem{BR02}
{\sc V.~Bergelson and I.~Ruzsa}, {\em Squarefree numbers, {IP} sets and ergodic
  theory}, in Paul Erd{\H o}s and his mathematics, {I} ({B}udapest, 1999),
  vol.~11 of Bolyai Soc. Math. Stud., J\'anos Bolyai Math. Soc., Budapest,
  2002, pp.~147--160.

\bibitem{Besicovitch55}
{\sc A.~S. Besicovitch}, {\em Almost periodic functions}, Dover Publications,
  Inc., New York, 1955.

\bibitem{DD74}
{\sc H.~Daboussi and H.~Delange}, {\em Quelques propri\'et\'es des fonctions
  multiplicatives de module au plus \'egal \`a {$1$}}, C. R. Acad. Sci. Paris
  S\'er. A, 278 (1974), pp.~657--660.

\bibitem{DD82}
\leavevmode\vrule height 2pt depth -1.6pt width 23pt, {\em On multiplicative
  arithmetical functions whose modulus does not exceed one}, J. London Math.
  Soc. (2), 26 (1982), pp.~245--264.

\bibitem{Delange72}
{\sc H.~Delange}, {\em Sur les fonctions multiplicatives de module au plus
  {\'e}gal {\`a} un}, C. R. Acad. Sci. Paris S{\'e}r. A-B, 275 (1972),
  pp.~781--784.

\bibitem{Delange83}
\leavevmode\vrule height 2pt depth -1.6pt width 23pt, {\em Sur les fonctions
  arithm\'etiques multiplicatives de module {$\leq 1$}}, Acta Arith., 42
  (1983), pp.~121--151.

\bibitem{Elliott79}
{\sc P.~D. T.~A. Elliott}, {\em Probabilistic number theory. {I}}, vol.~239 of
  Grundlehren der Mathematischen Wissenschaften [Fundamental Principles of
  Mathematical Science], Springer-Verlag, New York-Berlin, 1979.
\newblock Mean-value theorems.

\bibitem{FKLdraft}
{\sc S.~Ferenczi, J.~Ku{\l}aga-Przymus, and M.~Lema{\'n}czyk}, {\em Sarnak's
  conjecture -- what's new}.
\newblock to appear in: Proceedings of the Chair Morlet semester "Ergodic
  Theory and Dynamical Systems in their Interactions with Arithmetic and
  Combinatorics" 1.08.2016--31.01.2017, Springer, Lecture Notes in Math.,
  arXiv:1710.04039 .

\bibitem{Frantzikinakis15}
{\sc N.~Frantzikinakis}, {\em Multiple correlation sequences and nilsequences},
  Invent. Math., 202 (2015), pp.~875--892.

\bibitem{FH17}
{\sc N.~Frantzikinakis and B.~Host}, {\em Higher order {F}ourier analysis of
  multiplicative functions and applications}, J. Amer. Math. Soc., 30 (2017),
  pp.~67--157.

\bibitem{FH17-2}
\leavevmode\vrule height 2pt depth -1.6pt width 23pt, {\em Multiple ergodic
  theorems for arithmetic sets}, Trans. Amer. Math. Soc., 369 (2017),
  pp.~7085--7105.

\bibitem{FHK13}
{\sc N.~Frantzikinakis, B.~Host, and B.~Kra}, {\em The polynomial
  multidimensional {S}zemer\'edi theorem along shifted primes}, Israel J.
  Math., 194 (2013), pp.~331--348.

\bibitem{Gowers01}
{\sc W.~T. Gowers}, {\em A new proof of {S}zemer\'edi's theorem}, Geom. Funct.
  Anal., 11 (2001), pp.~465--588.

\bibitem{GSdraft}
{\sc A.~Granville and K.~Soundararajan}, {\em Multiplicative number theory: The
  pretentious approach}.
\newblock In preparation -
  \href{http://www.dms.umontreal.ca/~andrew/PDF/BookChaps1n2.pdf}{http://www.dms.umontreal.ca/$\sim$andrew/PDF/BookChaps1n2.pdf}.

\bibitem{GT10}
{\sc B.~Green and T.~Tao}, {\em Linear equations in primes}, Ann. of Math. (2),
  171 (2010), pp.~1753--1850.

\bibitem{GT12-2}
{\sc B.~Green and T.~Tao}, {\em The {M}\"obius function is strongly orthogonal
  to nilsequences}, Ann. of Math. (2), 175 (2012), pp.~541--566.

\bibitem{GTZ12}
{\sc B.~Green, T.~Tao, and T.~Ziegler}, {\em An inverse theorem for the
  {G}owers {$U^{s+1}[N]$}-norm}, Ann. of Math. (2), 176 (2012), pp.~1231--1372.

\bibitem{Ruzsa77}
{\sc I.~Z. Ruzsa}, {\em General multiplicative functions}, Acta Arith., 32
  (1977), pp.~313--347.

\bibitem{Tao12}
{\sc T.~Tao}, {\em Higher order {F}ourier analysis}, vol.~142 of Graduate
  Studies in Mathematics, American Mathematical Society, Providence, RI, 2012.

\bibitem{Wirsing61}
{\sc E.~Wirsing}, {\em Das asymptotische {V}erhalten von {S}ummen \"uber
  multiplikative {F}unktionen}, Math. Ann., 143 (1961), pp.~75--102.

\end{thebibliography}

\bigskip
\footnotesize
\noindent
Vitaly Bergelson\\
\textsc{Department of Mathematics, Ohio State University, Columbus, OH 43210, USA}\par\nopagebreak
\noindent
\textit{E-mail address:}
\href{mailto:vitaly@math.ohio-state.edu}
{\texttt{vitaly@math.ohio-state.edu}}

\medskip

\noindent
Joanna\ Ku\l aga-Przymus\\
\textsc{Aix-Marseille Universit\'e, Centrale Marseille, CNRS, Institut de Math\'ematiques de Marseille, UMR7373, 39 Rue F.\ Joliot Curie 13453, Marseille, France}\\
\textsc{Faculty of Mathematics and Computer Science, Nicolaus Copernicus University, Chopina 12/18, 87-100 Toru\'{n}, Poland}\par\nopagebreak
\noindent
\textit{E-mail address:}
\href{mailto:joanna.kulaga@gmail.com}
{\texttt{joanna.kulaga@gmail.com}}

\medskip

\noindent
Mariusz Lema\'nczyk\\
\textsc{Faculty of Mathematics and Computer Science, Nicolaus Copernicus University, Chopina 12/18, 87-100 Toru\'{n}, Poland}\par\nopagebreak
\noindent
\textit{E-mail address:} 
\href{mailto:mlem@mat.umk.pl}
{\texttt{mlem@mat.umk.pl}}

\medskip

\noindent
Florian K.\ Richter\\
\textsc{Department of Mathematics, Ohio State University, Columbus, OH 43210, USA}\par\nopagebreak
\noindent
\textit{E-mail address:}
\href{mailto:richter.109@osu.edu}
{\texttt{richter.109@osu.edu}}

\end{document}